\setlist{topsep=0mm}
\theoremstyle{plain} 
\numberwithin{equation}{section}
\newtheorem{theorem}{Theorem}[section]
\newtheorem{corollary}[theorem]{Corollary}
\newtheorem{lemma}[theorem]{Lemma}
\theoremstyle{definition}
\newtheorem{example}[theorem]{Example}
\newtheorem{assumption}[theorem]{Assumption}
\newtheorem{remark}[theorem]{Remark}
\newcommand{\ttl}{\Large Sample Path Moderate Deviation Principles for Queues with \\[5pt]Waiting-time Dependent Interarrival and Service Times }
\begin{document}
\title[]{\ttl}
\date{\today}
\author{Chang Feng$^*$}
\author{John J. Hasenbein$^*$}
\address{$^*$The University of Texas at Austin, 
Dept. of Mechanical Engineering, 
University Station C2200, Austin, TX, 78712-0292}
\email{chang.feng@utexas.edu}
\email{has@me.utexas.edu} 

\author{Guodong Pang$^{\dag}$}
\address{$^\dag$ Department of Computational Applied Mathematics and Operations Research,
	George R. Brown School of Engineering and Computing, 
	Rice University,
	Houston, TX 77005}
\email{gdpang@rice.edu}

\begin{abstract}
    We consider a single-server queue where interarrival and service times depend linearly and randomly on customer waiting times, and establish a sample-path moderate deviation principle (MDP) for the waiting time process. The waiting times for the queue can be written as a modified Lindley recursion with a random weight coefficient. Under a natural scaling of the random coefficients, we analyze the fluid behavior of the workload process and derive the stable equilibrium point, which can be zero or a positive value. The moderate-deviation-scaled process is centered around the stable equilibrium point and then represented as a linear stochastic differential equation driven by two random walks together with additional asymptotically negligible error terms and possibly a reflection at zero. The rate functions of MDPs in the two scenarios can be characterized explicitly, and they differ in that the case with zero centering term involves the linearly generalized Skorokhod reflection mapping while the case with positive centering term does not (similar to the corresponding diffusion limits). Our analysis involves the MDP for the associated linearly recursive Markov chains, invoking a perturbation of two independent random walks, and employing martingale techniques to prove the asymptotically exponentially vanishing  error terms.
\end{abstract}

\keywords{Sample path moderate deviations, waiting-time dependent queues, modified Lindley recursion with random weights}

\maketitle

\section{Introduction} \label{sec:intro}

In real-life queueing systems, both arrival processes and service times often depend on system congestion or delay. For example, empirical studies show that overcrowded emergency rooms (ERs) lose a portion of patients due to balking \citep{GreSGG06}. Similarly, when intensive care units (ICUs) are overloaded, physicians may accelerate patient throughput by transferring less severe cases to transitional care units or general wards \citep{ChaYE14}. Comparable workload-dependent behaviors also arise in biology, manufacturing, inventory management, computer networks, and insurance applications.

In this paper, we focus on one such type of workload-dependency structure introduced by  \cite{Whi90}, where the interarrival and service times depend linearly and randomly upon the customer waiting times. In this case, the waiting time of customers can be expressed through the Lindley-type stochastic recursion: 
\begin{equation} \label{eq:lindley_recursion}
    W_{i+1} = (C_i W_i + X_i)^+, \quad i \in \mathbb{N}_0,  
\end{equation} 
where we can interpret $X_i$ as the nominal increment variable and $C_i$ as the variable due to the linear dependence mentioned above (see Section \ref{subsec:the_model} for the precise definition). 
Our goal is to consider a sequence of such queues (indexed by $n$) and establish a sample-path moderate deviation principle (MDP) under various parameter regimes. Although we are motivated by queueing applications, the process defined through \eqref{eq:lindley_recursion} can be regarded more generally as a reflected AR(1) process with random coefficients. Thus, the results in this paper are widely applicable to many other areas of engineering and statistics. 

Most of the research on \eqref{eq:lindley_recursion} so far has focused on analyzing its transient and steady-state distributions using transform methods. \cite{BoxVla07} studied the case where $C_i$ is a Bernoulli-type random variable taking the values $\pm 1$. In this case, a large deviation result is also proved in \cite{VlaPal09} for the tail probabilities of the steady-state distribution. \cite{BoxMJ16} studied the reflected AR(1) process, which is the case when $C_i$ is deterministic. More recently, \citet{BoxLP21, Hua23, DimFie24} studied various cases in which $C_i$ takes on more general or more sophisticated forms. 

There are very limited results on approximations and limiting theorems for \eqref{eq:lindley_recursion}, at least at the sample-path level. \cite{Whi90} built on previous results of \cite{Ver79} and proved a functional central limit theorem (FCLT). However, the limiting diffusion process has a rather complicated form and was not given explicitly. \cite{BoxMJ16} proved an FCLT result for the reflected AR(1) process, in which the limiting diffusion there turned out to be a reflected Ornstein-Uhlenbeck (OU) process. 
 Recently, several sample-path large deviation principle (LDP) results have been established for models related to \eqref{eq:lindley_recursion}. \cite{BazEtal25} proved a sample-path LDP with sublinear rates for the conventional Lindley recursion (corresponding to $C_i = 1$). \cite{CBZ23} proved a sample-path LDP for the affine recursion $W_{i+1}=C_i W_i+X_i$ when the stationary distribution of $W_i$ has heavy tails. To our knowledge, our paper is the first to analyze a sample-path MDP for stochastic models governed by \eqref{eq:lindley_recursion}.

To gain analytical tractability, we make several key modeling choices. To establish an MDP or FCLT, the process needs to be centered around its functional law of large numbers (FLLN, or fluid) limit $\bar{W}$. We show that the fluid limit takes on a complicated form of an exponentially decaying (or growing) function. 
We shall restrict ourselves to the cases where the fluid limit is stable, and prove the MDP results for the moderate-deviation-scaled (MD-scaled) workload processes of the form
\begin{equation}\label{eq:w_tilde_intro}
    \widetilde{W}^n(t) = \frac{1}{b_n \sqrt{n}} (W^n_{\lfloor nt \rfloor} - n \bar{W}^*), \quad t \ge 0, 
\end{equation}
where $b_n$ is some scaling sequence satisfying the conditions in \eqref{eq:MDP_scaling_seq} and $\bar{W}^*$ is the stable fixed point of the fluid limit. In our analysis of the fluid limit's behavior, \(\bar{W}^*\) could be either $0$, 
or a positive value. This leads to different rate functions for the MDP. When \(\bar{W}^* = 0\), due to the non-negativity of $W^n$, the rate function involves optimization over paths that are regulated by a linearly generalized Skorokhod reflection mapping. However, for \(\bar{W}^* > 0\), the limiting path for \eqref{eq:w_tilde_intro} does not need to be regulated. This suggests that the behavior of \eqref{eq:w_tilde_intro} in the limit should be unaffected if the positive part operator in \eqref{eq:lindley_recursion} is removed.  

This provides motivation to establish an MDP for a linearly recursive Markov chain \(V^n\) that satisfies the stochastic recursion  
\begin{equation} \label{eq:intro_markov_chain}
    V^n_{i+1} =  C^n_i V^n_i + X^n_i , \quad i \in \mathbb{N}_0, 
\end{equation} 
with $V^n_0$ being a random variable. Here, we make another key modeling assumption by letting  
\begin{equation}\label{eq:intro_C_scaling}
    C_i^n = 1 - \frac{1}{n} \Theta_i, 
\end{equation}
where $\{\Theta_i\}_{i \in \mathbb{N}_0}$ is a fixed sequence of i.i.d.\ random variables. This type of scaling was used in \cite{BoxMJ16} to establish an FCLT for the reflected AR(1) process, with \(\Theta_i\) being deterministic. { The model defined by \eqref{eq:intro_markov_chain} and \eqref{eq:intro_C_scaling} is related to the recursive Markov systems studied by \cite{DupJoh2015}, where an MDP was established through a weak-convergence approach based on variational representations. However, this method does not seem to translate easily to proving the MDP for \eqref{eq:w_tilde_intro} due to reflection.}
We instead develop an alternative and more direct approach by establishing the MDP for the finite-dimensional distributions (this step is implicit in the MDP for random walks given in Theorem \ref{thm:MDP_RW}) and exponential tightness, together with an application of the contraction principle. 
This involves representing the MD-scaled process \(\widetilde{V}^n\) as a linear stochastic differential equation (SDE) driven by two independent random walks with several asymptotically negligible error terms, see equation \eqref{eq:V_md_representation}. The main technical difficulty is showing that the error terms, including a random walk with random coefficients, are exponentially equivalent to the zero process in space $\mathcal{D}_T$. To tackle this, we devise a sequence of arguments, which proceed in the order of Lemmas \ref{lem:exp_bounded_Vbar}, \ref{lem:exp_tight_esp_V_2}, Theorem \ref{thm:exp_tightness_V}, Corollary \ref{cor:V_bar} and Lemma \ref{lem:eps_V_expeq_0}. The proofs involve developing exponential bounds, showing exponential equivalence of various processes and utilizing martingale techniques, while relying upon characterizations of exponential tightness and exponential equivalence in $\mathcal{D}_T$ given in Appendix \ref{subsec:exp_tightness} and \ref{subsec:sup_exp_cvg_prob}. See for example the proof of Lemma \ref{lem:exp_tight_esp_V_2}. 

Next, we adapt the aforementioned approach to establish the MDP for $\widetilde{W}^n$ defined in \eqref{eq:w_tilde_intro}. The process $\widetilde{W}^n$ can also be represented as a linear SDE given by \eqref{eq:md_representation}. However, compared to \eqref{eq:V_md_representation}, this representation includes an additional term $\widetilde{L}^n$ arising from reflection at the origin. The first step is to establish an exponential stochastic boundedness property for the fluid-scaled process $\bar{W}^n$, stated in Lemma \ref{lem:W_bar_exp_stocastic_bdd}, which corresponds to Lemma \ref{lem:exp_bounded_Vbar} from the earlier analysis. This is achieved by bounding $\bar{W}^n$ using two linearly recursive Markov chains, defined in \eqref{eq:intro_markov_chain}, under different initial conditions. The previous arguments then apply directly to show that the error terms in \eqref{eq:md_representation} are exponentially equivalent to the zero process. We next analyze the additional term $\widetilde{L}^n$, which serves as the regulator process in the linearly generalized Skorokhod mapping when $\bar{W}^* = 0$, and is exponentially equivalent to zero when $\bar{W}^* \neq 0$. Finally, applying the contraction principle yields the MDP, from which the rate functions can be derived explicitly. 

We remark that a representation similar to \eqref{eq:md_representation} can be constructed for the diffusion-scaled wait-time or workload processes (with the same centering term as the MDP), 
which enables us to prove the FCLT results, with the diffusion limit being either an OU process in the case of a positive centering term or a reflected OU process in the case of zero centering. We provide the proofs for these results in Appendix \ref{app:diffusion_approx}, which complement the studies in  \cite{Whi90, BoxMJ16}.

Our work contributes to the limited literature on moderate deviations in queueing theory. For general overviews of MDPs for traffic processes and their connections to large deviations and central limit theorems, see \cite{wischik2001, GCW04, ShW95}. Sample-path MDPs have been established in several settings: GI/GI/1 queues \citep{Puh99}, cumulative fluid processes with many exponential on-off sources \citep{majewski2007}, workload processes in stochastic fluid queues with long-range dependent input \citep{CYZ1999}, infinite-server queues with time-varying service times modeled via shot-noise processes \citep{AP2024SNP}, GI/GI/N queues in the near Halfin-Whitt regime \citep{Puh23}, and GI/GI/1+GI queues \citep{FHP25}.

We also highlight several other closely related areas of the literature. One is the analysis of workload-dependent queues; see, for example, \cite{Har67, Cal73, Bri88, BroS92, BRBK04, BKNN11, Leg18}. Another is the study of the unreflected stochastic recursion given by \eqref{eq:intro_markov_chain}, commonly referred to in the literature as the “Vervaat perpetuity”; relevant results can be found in \cite{Kes73, Bra86, EmbGol94, GlaYao95, GolM01, Hor01, CBZ23}.

\subsection{Organization of the paper} 

In the rest of this section, we introduce the relevant terminologies and notation used in the sequel.  
In Section \ref{sec:model_results}, we formulate the queueing model and present the main MDP results. 
Section \ref{sec:fluid_analysis} is devoted to the analysis of the workload process under fluid scalings and identifying the stable fixed points of the limiting fluid equation. Section \ref{sec:MDP_LRMS} contains a moderate deviation analysis of a linearly recursive Markov system. Section \ref{sec:MDP_W} leverages the results and methods in the previous section to show the main theorems presented in Section \ref{sec:model_results}. Appendix \ref{app:fluid_proofs} contains proofs for the fluid approximation results in Section \ref{sec:fluid_analysis}. The FCLT results mentioned above are presented in Appendix \ref{app:diffusion_approx}. Finally, Appendix \ref{app:useful_facts} contains several useful facts that are used throughout the paper.

\subsection{Preliminaries and notations} \label{subsec:notations_definitions}

Throughout the paper, all random elements are implicitly defined on a probability space \( (\Omega, \mathcal{F}, \mathbb{P}) \). We also adopt the convention \(\mathbb{N}_0 \equiv \mathbb{N} \cup \{0\}\).

Given a Polish space \( \mathcal{X} \) with metric \( d(\cdot, \cdot) \), let \( \mathcal{B}(\mathcal{X}) \) denote the Borel \( \sigma \)-algebra. For a scaling sequence \( \{ a_n \}_{n \in \mathbb{N}} \) with \( a_n \uparrow \infty \), a family of \( \mathcal{X} \)-valued random elements \( \{ x_n\}_{n \in \mathbb{N}} \) is said to satisfy a \textit{large deviation principle} (LDP) in \( \mathcal{X} \) with rate $a_n$ and rate function $I: \mathcal{X} \to [0, \infty]$ if 
\begin{enumerate}
    \item[(i)] \( I \) is lower-semicontinuous and has compact level sets $\{x \in \mathcal{X}: I(x) \le a\}$, for all $a \ge 0$.  
    \item[(ii)] For all  $A \in \mathcal{B}(\mathcal{X})$,
        \begin{equation*} 
            - \inf_{x \in A^{\circ}} I(x) \le  \liminf_{ n \to \infty} \frac{1}{a_n} \log \mathbb{P} (x_n \in A ) 
            \le  \limsup_{n\to \infty} \frac{1}{a_n} \log \mathbb{P}  (x_n \in A) \le - \inf_{x \in \bar{A}} I(x).
        \end{equation*} 
\end{enumerate}

We say that the family $\{x_n\}_{n \in \mathbb{N}}$ is \textit{exponentially tight} in \( \mathcal{X} \) with rate $a_n$ if for all $\alpha \ge 0 $, there exists a compact set $K_\alpha \subset \mathcal{X}$ such that 
\begin{equation*}
    \label{eq:def_ExpTight}
    \limsup_{n \to \infty} \frac{1}{a_n} \log \mathbb{P}(x_n \notin K_\alpha) < - \alpha. 
\end{equation*}
Two families of \( \mathcal{X} \)-valued random elements \( \{ x_{n} \}_{n \in \mathbb{N}} \) and \( \{ y_{n} \}_{n \in \mathbb{N}} \) are said to be \textit{exponentially equivalent}  with rate \( a_n \) if for every \( \delta > 0 \), 
\begin{equation*}
    \lim_{n \to \infty} \frac{1}{a_n} \log \mathbb{P} \left( d(x_n, y_n) > \delta \right) = - \infty. 
\end{equation*}
A special case is when we let \( y_n = x_0 \in \mathcal{X} \) for all \( n \in \mathbb{N} \). Then we say that the $\{x_n\}_{n \in \mathbb{N}}$ \textit{converge super-exponentially in probability} to $x_0$ with rate $a_n$ and write \( x_n \stackrel{P^{1/a_n}}{\longrightarrow} x_0 \).

We shall fix \( T>0 \) and work in the function space \( \mathcal{D}_T \equiv \mathcal{D} ([0,T], \mathbb{R}) \) of c\`adl\`ag processes endowed with the \( J_1 \) Skorokhod topology, which is a Polish space. We use \(d_{J_{1}}\) to denote the \(J_{1}\) metric. The subspace \( \mathcal{C}_T \equiv \mathcal{C}([0,T], \mathbb{R}) \subset \mathcal{D}_T\) consists of processes with continuous paths. The subspace \( \mathcal{AC}_{0} \subset \mathcal{C}_T \) consists of processes with paths that are absolutely continuous and start from \( 0 \). For \( x \in \mathcal{D}_T \), the uniform norm is denoted \( \left\| x \right\|_{T} := \sup_{t \in [0,T]} \left\vert x(t) \right\vert  \) and the supremum map is denoted \( x_{\uparrow}(t) := \sup_{u \in [0,t]} x(u) \).  We use \( \mathfrak{e} \) to denote the identity process, that is, \( \mathfrak{e}(t) = t \) for all $t \ge 0$. When the context is clear, we use $0$ to denote the zero process. 

For the family of processes \( \{ x_n \}_{n \in \mathbb{N}} \) with paths in \( \mathcal{D}_T \), the sample-path LDPs and sample-path MDPs are differentiated through the choice of the scaling sequence \( a_n \) and the scalings used to define \( x_n \). Specifically, in this paper, by a \textit{sample-path moderate deviation principle} (MDP), we mean a sample-path LDP for a centered, MD-scaled process \( \widetilde{x}^{n} \) with speed \( b_n^2 \), where \(b_{n} \to \infty\) and \( b_{n}/\sqrt{ n } \to 0\).   For our MDP results, see Section \ref{subsec:MD_results} for the specific scaling sequence and processes under consideration. We mention that, to highlight the scalings used, the convention in this paper is to use $\bar{x}^n$ for the FLLN-scaled/fluid-scaled processes and $\hat{x}^n$ for the FCLT-scaled (diffusion-scaled) process. The MD-scaled processes are denoted by $\widetilde{x}^n$.

Lastly, we say that  the family \( \{ x_n \}_{n \in \mathbb{N}} \) is \textit{\( \mathcal{C} \)-exponentially tight} with rate \( a_n \) if \( \{ x_n \}_{n \in \mathbb{N}} \) is exponentially tight in \(\mathcal{D}_T\) with rate \( a_n \) and for any \(\delta > 0\), 
\[
    \limsup_{n \to \infty} \frac{1}{a_n} \log \mathbb{P} \left( \sup_{t \in [0,T]} |x_n(t) - x_n(t-)| > \delta \right) = - \infty. 
\]
See Appendix \ref{subsec:exp_tightness} for characterizations of exponential tightness and some further discussions. To facilitate navigation, we collect other main notations used throughout the paper in Appendix \ref{app:notation_table}, together with a brief description and the location of its first appearance.

\section{Model Formulation and Main Results} \label{sec:model_results}

\subsection{The model}\label{subsec:the_model} 
Consider a sequence of single server queues under the FIFO service discipline indexed by $n \in \mathbb{N}$. For the $n$-th queue, 
let $\{(\mathfrak{A}^n_i, \mathfrak{S}^n_i ,  A^n_i, B^n_i),\; i \in \mathbb{N}_0\} $ be an i.i.d.\ sequence of random vectors, where for each $i$, we assume that $\mathfrak{A}_i^n, \mathfrak{S}_i^n, A_i^n, B_i^n$ are mutually independent for simplicity. Let \( W_i^n \) denote the waiting time of \( i \)-th customer and define 
\begin{equation*}
\begin{aligned}
    \mathfrak{A}'^{,n}_i = \mathfrak{A}^n_i + A^n_i W^n_i , \\
    \mathfrak{S}'^{,n}_i = \mathfrak{S}^n_i + B^n_i W^n_i . 
\end{aligned}
\end{equation*}
We shall interpret \( \mathfrak{A}'^{,n}_i \) as the interarrival time between customers \(i\) and \(i+1\), and \( \mathfrak{S}'^{,n}_i\) as the service time of customer \(i\). By definition, the interarrival and service times depend {\it linearly} and {\it randomly} upon the waiting times. Similar to \cite{Whi90}, we shall call \(\mathfrak{A}^n_i\) the {\it nominal interarrival time} and $\mathfrak{S}^n_i$ the {\it nominal service time}. Note that if the state-dependent terms are omitted, they would be the actual interarrival and service times, and we revert to the conventional GI/GI/1 queue. 

For single server queues, the waiting times of customers satisfy a recursion of Lindley type. For our model, it is given by
\begin{equation} \label{eq:lindley_recurison_W}
    W^n_{i+1} = (W_i^n + \mathfrak{S}'^{,n}_i - \mathfrak{A}'^{,n}_i )^+, \quad i \in \mathbb{N}_0,  
\end{equation}
with $W^n_0$ being a non-negative random variable. If we define
\begin{equation*}
    \begin{aligned}
    X^n_i &= \mathfrak{S}^n_i - \mathfrak{A}^n_i , \\
    C_i^n &= 1 + B^n_i - A^n_i,
\end{aligned}
\end{equation*}
then we can further simplify \eqref{eq:lindley_recurison_W} by writing 
\begin{equation}\label{eq:recursion_W}
    W^n_{i+1} = (C^n_i W^n_i + X^n_i)^+ .
\end{equation}

It is of interest to analyze the system's behavior under different parameter regimes. 
Define the \textit{nominal traffic intensity} by \(\rho_n= \mathbb{E}[\mathfrak S_0^n] / \mathbb{E}[\mathfrak A_0^n]\). 
Then, the nominal load condition of the system is classified as follows: 
\begin{itemize}
    \item the system is \textit{overloaded} if \(\rho_n>1\), i.e., if \(\mathbb{E}[X_0^n]>0\) or \(\mathbb{E}[\mathfrak{S}^n_{0}] > \mathbb{E}[\mathfrak{A}^n_{0}]\);
    \item the system is \textit{critically loaded} if \(\rho_n=1\), i.e., if \(\mathbb{E}[X_0^n]=0\) or \(\mathbb{E}[\mathfrak{S}^n_{0}] = \mathbb{E}[\mathfrak{A}^n_{0}]\);
    \item the system is \textit{underloaded} if \(\rho_n<1\), i.e., if \(\mathbb{E}[X_0^n]<0\) or \(\mathbb{E}[\mathfrak{S}^n_{0}] < \mathbb{E}[\mathfrak{A}^n_{0}]\).
\end{itemize}
The coefficient \(C_i^n\)
captures the net effect of the waiting-time dependence. We classify the average feedback effect of the system as follows: 
\begin{itemize}
    \item the system is \textit{feedback-damping} if \(\mathbb{E}[C_0^n]<1\), i.e., if \(\mathbb{E}[A_0^n]>\mathbb{E}[B_0^n]\);
    \item the system is \textit{feedback-neutral} if \(\mathbb{E}[C_0^n]=1\), i.e., if \(\mathbb{E}[A_0^n]=\mathbb{E}[B_0^n]\);
    \item the system is \textit{feedback-amplifying} if \(\mathbb{E}[C_0^n]>1\), i.e., if \(\mathbb{E}[A_0^n]<\mathbb{E}[B_0^n]\).
\end{itemize}
Thus \(X_i^n\) describes the nominal net input, while \(C_i^n\) describes the net feedback effect created by the dependence of future interarrival and service times on the current waiting time.
While keeping in mind the original definitions, it suffices to only work with \eqref{eq:recursion_W} for the rest of the paper. 

The first step is to construct sample paths for the waiting time process in space \(\mathcal{D}_T\). The positive-part operator in \eqref{eq:recursion_W} can be written as:  
\begin{equation*}
    W_{i+1}^n = C^n_i W^n_i + X^n_i + \Psi^n_i ,
\end{equation*}
where $\Psi_i^n = - \min (C_i^n W_i^n + X_i^n, 0 )$. Then we have 
\begin{equation*}
    W^n_{i+1}- W^n_i = (C^n_i-1) W^n_i + X^n_i + \Psi_i^n . 
\end{equation*}
By telescoping the sum on the left and defining $L^n_i = \sum_{j=0}^{i} \Psi_j^n$, we obtain the following representation for the waiting times: 
\begin{equation}\label{eq:waiting_time_representation}
    W_i^n = W_0^n + \sum_{j=0}^{i-1} X^n_j + \sum_{j=0}^{i-1} (C^n_j - 1) W^n_j + L^n_{i-1}, \quad i \in \mathbb{N}_0, 
\end{equation}
with the convention that an empty sum is equal to $0$ (when $i = 0$). 
To formulate limit theorems, we re-index the discrete time process by $\lfloor nt \rfloor,\; t \in [0,T]$ so that its sample paths lie in the space $\mathcal{D}_T$. The waiting time process can then be written as  
\begin{equation}\label{eq:process_W}
    W^{n}_{\lfloor nt \rfloor } = W^{n}_0 + \sum_{i=0}^{\lfloor nt \rfloor -1} X_i^{n} + \sum_{i=0}^{\lfloor nt \rfloor -1} (C_i^{n}- 1) W^{n}_i + L^{n}_{\lfloor nt \rfloor -1}, \quad t \in  [0,T ].  
\end{equation}
Denote $ W^{n}(t) := W^{n}_{\left\lfloor nt \right\rfloor} $ and $ L^{n}(t ) := L^{n}_{\left\lfloor nt \right\rfloor-1} $.  From the definitions, we have $ L^{n}(0 ) = 0 $ and $ L^{n} (t ) \ge 0 $ for all $t$. Moreover, since $ 1_{ \{ \Psi^{n}_{i} > 0 \}} 1_{ \{ W^{n}_{i+1} > 0 \}} = 0 $  for all $ i \in \mathbb{N}_{0} $, it follows that 
\begin{equation} \label{eq:L_regulator}
    \int_{0}^{t} W^{n}(s ) \textrm{d} L^{n}(s ) = \int_{0}^{t} W^{n}_{ \left\lfloor ns \right\rfloor} \textrm{d} L^{n}_{\left\lfloor ns \right\rfloor-1} = \sum_{i=0}^{\left\lfloor nt \right\rfloor-1} W^{n}_{i+1} \Psi^{n}_{i} = 0. 
\end{equation}
Recall that the workload process $ W^{n} $ is nonnegative, therefore we may  interpret $ L^{n} $ as a type of regulator process enforcing reflections at zero. 

In \eqref{eq:process_W}, the waiting time process is driven by two random walks, one of which is randomly weighted by the customer waiting times. Note that the random walks are dependent, since the random weights \(W^n_i\) depend on \(C^n_k\) and \(X^n_k\), for all $0 \le k < i$. 
We shall make several modeling assumptions regarding these random walks.  

\begin{assumption}[Model Assumptions] 
    \label{assump:model}
    \leavevmode
    \begin{enumerate} 
        \item[(i)] Let \( \{\Theta_{i},\; i \in \mathbb{N}_0\} \) be an i.i.d.\ sequence of random variables with finite mean \( \theta \) and variance \( \sigma^{2}_{\Theta} \).  
        \item[(ii)] For each \(n \in \mathbb{N}\), the family of random variables \( \{C^{n}_{i},\; i \in \mathbb{N}_0\} \) has the form  
        \[
            C^{n}_{i} = 1 - \frac{1}{n} \Theta_{i}, \quad \forall i \in \mathbb{N}_0. 
        \]
        \item[(iii)] For each \(n \in \mathbb{N}\), the family of variables \( \{X^{n}_{i}, i \in \mathbb{N}_0\} \) is an i.i.d.\ sequence that is independent of the sequence \( \{ \Theta_i,\ i \in \mathbb{N}_0 \} \). Further, we assume $X^n_0$ has finite mean \( \mu_n \) and variance \( \sigma^{2}_{X,n} \) such that \( \mu_n \to \mu\) and \( \sigma^{2}_{X,n} \to \sigma^{2}_X \) as \( n \to \infty \) for some \( \mu \in \mathbb{R}\) and \( \sigma_X > 0\). 
    \end{enumerate}
\end{assumption}

\begin{remark}
     In Assumption \ref{assump:model}, 
     we assumed a very specific form of scalings for the random variables \( C^{n}_{i} \). This can be viewed as an extension of the scalings used in \cite{BoxMJ16}, which established the FCLT results for $C^n_i = 1 - \frac{\alpha}{n}$, where $\alpha$ is a constant.  In other literature,  for example \cite{Ver79} and \cite{Whi90}, the FCLT results were established for \( C^{n}_{i} = (C_i)^{1/n} \), where \( \{ C_i, i \in \mathbb{N}_0 \} \) is some i.i.d.\ sequence. The techniques used  were to analyze the random walk $n^{-1} \sum_i \log C_i$. These two types of scalings can be seen as close approximations to each other. Specifically, if we let \( - \Theta_i = \log C_i \), then when \( n \) is large, 
        \[
            (C_i)^{1/n} = e^{\frac{1}{n} \log C_i} = e^{ - \frac{1}{n} \Theta_i} \approx e^{\log(1-\frac{\Theta_i}{n} )} = 1- \frac{1}{n}\Theta_i. 
        \]
    Further, using our FCLT results in Appendix \ref{app:diffusion_approx}, we recover the same approximation for the stationary distribution given in \cite{Whi90}; see Remark \ref{rmk:fclt_comparison}. This justifies our choice of scalings for \(C^n_i\).   

    In relation to the original queueing dynamics \eqref{eq:lindley_recurison_W}, the scaling in Assumption 1(ii) corresponds to \(A_i^n-B_i^n= \Theta_i / n\). Thus, if \(\theta > 0\), the system is feedback-damping, but the per-customer feedback effect is of order \(1/n\). Similarly, the system is feedback-amplifying if \(\theta < 0\). In Assumption 1(iii), independence between \( \{ X^n_{i} \}_{i}\) and \( \{ \Theta_{i} \}_{i}\) is implied by independence between the nominal variables \( \{(\mathfrak{A}^n_{i}, \mathfrak{S}^n_{i}) \}_{i}\) and the feedback variables \( \{(A^n_{i}, B^n_{i}) \}_{i}\). Further, by definition of \(X^n_{i}\), the limiting system is overloaded if \( \mu > 0\). Similarly, it is critically loaded if \(\mu=0\), and underloaded if \(\mu < 0\). 
    
\end{remark}

Lastly, for each \( n \in \mathbb{N} \), we also define a  filtration \( \{ \mathcal{F}^{n}_i, i \in \mathbb{N}_0 \} \) where 
\begin{equation}
    \mathcal{F}^{n}_i = \sigma( W_0^{n}, (\Theta_0, X_0^{n}), (\Theta_1, X_1^{n}) \ldots, (\Theta_i, X_i^{n}) ). 
\end{equation}
In particular, by this definition, the waiting time $W_i^{n}$ is $\mathcal{F}^{n}_{i-1}$-measurable.

\subsection{Moderate Deviation Results} \label{subsec:MD_results}

We first introduce the scaling sequence \( \{b_n, n \in \mathbb{N}\}\) that satisfies 
\begin{equation}\label{eq:MDP_scaling_seq}
    b_n \to \infty \,\, \textrm{ and } \,\, \frac{b_n}{\sqrt{n}} \to 0, \quad \textrm{as } n \to \infty.
\end{equation}
We study moderate deviations of the centered and rescaled process of the form
\begin{equation}\label{eq:W_tilde}
    \widetilde{W}^{n}(t) = \frac{\sqrt{n}}{b_n} \left( \bar{W}^{n}(t) -  \bar{W}^{*} \right) . 
\end{equation}
In \eqref{eq:W_tilde}, $\bar{W}^n$ is the fluid-scaled process defined by 
\begin{equation} \label{eq:fluid_scaling_W}
    \bar{W}^{n}(t) = \frac{1}{n} W^{n}_{\lfloor nt \rfloor}, \quad t \in [0,T].   
\end{equation}
We show in Section \ref{sec:fluid_analysis} that \(\bar{W}^n\) converges to a fluid limit \(\bar{W}\). Restricting attention to  cases where \(\bar{W}\) is stable, we take the constant \( \bar{W}^*\) in \eqref{eq:W_tilde} to be the stable fixed point of the fluid limit.

An MDP established for \eqref{eq:W_tilde} describes order \(\mathcal{O}(b_n /\sqrt{n})\) deviations of \( \bar{W}^{n}\) from the fluid equilibrium \(\bar{W}^{*}\). Equivalently, in the original waiting-time scale, these deviations are of the form
\[
    W^{n}_{\lfloor nt \rfloor} - n\bar{W}^{*}
    = \mathcal{O}(b_n\sqrt{n}). 
\] 
Thus, the MDP scale lies between the diffusion and fluid scales: the deviations are larger than the usual diffusion-scale fluctuations of order \( \sqrt{n} \), but smaller than fluid-scale deviations of order \( n \). We emphasize that the results below differ from sample-path LDPs for the fluid-scaled process \( \bar{W}^{n} \) with rate \( n \), which would concern order \(\mathcal{O}(1)\) deviations of \( \bar{W}^{n} \) from its fluid limit.

Now, denoting $\bar{W}^n_0 := n^{-1}W^n_0$ and \(\widetilde{W}^n_0 := b_n^{-1} \sqrt{n} (\bar{W}^n_0 - \bar{W}^*)\), 
we impose the following conditions for the MDP results.

\begin{assumption}[MDP Assumptions]
    \label{assump:MDP_W} 
    \leavevmode
    \begin{enumerate}
        \item[(i)] For all \(n \in \mathbb{N}  \), let the random variable \(W^n_0 \ge 0\) a.s.\ and  
        let \( \widetilde{W}^n_0 \stackrel{P^{1/b_n^2}}{\longrightarrow} w_0 \) for some \(w_0 \in \mathbb{R}\). 
        \item[(ii)] For some $a > 0$, 
        \[
            \mathbb{E}\left[ e^{a |\Theta_0|} \right] < \infty, \quad \text{and}\quad \sup_{n \in \mathbb{N}} \mathbb{E}\left[ e^{a |X^n_0|} \right] < \infty.
        \]
        \item[(iii)] The sequence \( \frac{\sqrt{n}}{b_n} (\mu_n - \mu )  \to  r  \in \mathbb{R}\) as \( n \to  \infty \). 
    \end{enumerate}
\end{assumption}
We will see in Section \ref{sec:MDP_W} that, similar to \eqref{eq:process_W}, the MD-scaled process $\widetilde{W}^n$ is related to the following two random walks:  
\begin{equation} \label{eq:RW_MD_scaling}
        \widetilde{R}_{X}^{n}(t) \equiv \frac{1}{b_n \sqrt{n}} \sum_{i=0}^{\left\lfloor nt \right\rfloor-1} (X^{n}_{i} - \mu_{n}) , 
        \quad 
        \widetilde{R}_{\Theta}^{n}(t) \equiv \frac{1}{b_n \sqrt{n}} \sum_{i=0}^{\left\lfloor nt \right\rfloor-1} (\Theta_{i} - \theta ) ,
        \quad t \in [0,T].  
    \end{equation}
The sample-path MDP for random walks has been proven under more general conditions than Assumption \ref{assump:MDP_W} (ii). 
For example, see \cite{AP24} and Theorem 6.1 in \cite{PuhW97}. 
Here, we simply state the result. 
\begin{theorem}[MDP for Random Walks]
    \label{thm:MDP_RW}
    Under Assumptions \ref{assump:model} (i), (iii) and \ref{assump:MDP_W} (ii), 
    the \( \mathcal{D}_T\)-valued families of processes \( \{\widetilde{R}_{X}^{n},\ n \in \mathbb{N}\}  \) and \( \{\widetilde{R}_{\Theta}^{n},\ n \in \mathbb{N} \} \) respectively satisfy MDPs in \(\mathcal{D}_T\) with rate \( b_n^{2} \) and rate function \( I_X\) and \( I_\Theta \), where 
    \begin{equation*}
    \begin{aligned}
        I_{X}(\phi) &= 
        \begin{cases} \frac{1}{2  \sigma^{2}_{X}} \int_{0}^{T} \vert \dot{\phi} (t) \vert ^{2} \mathrm{d}t , & \phi \in \mathcal{AC}_0, \\ \infty  , & \text{otherwise.} \end{cases}
        \\
        I_{\Theta}(\phi) &= 
        \begin{cases} \frac{1}{2  \sigma^{2}_{\Theta}} \int_{0}^{T} \vert \dot{\phi}(t) \vert ^{2} \mathrm{d}t , & \phi \in \mathcal{AC}_0, \\ \infty  , & \text{otherwise.} \end{cases}
    \end{aligned}
    \end{equation*}
\end{theorem}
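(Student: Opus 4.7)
The plan is to establish the MDP for both random walks by combining a finite-dimensional MDP (obtained via the Gärtner-Ellis theorem) with $\mathcal{C}$-exponential tightness in $\mathcal{D}_T$, and then identifying the rate function through a Legendre-transform / projective-limit argument. Since the arguments for $\widetilde{R}_X^n$ and $\widetilde{R}_\Theta^n$ are structurally identical---only the summand sequences and the relevant variance parameter differ---I would describe the argument for $\widetilde{R}_X^n$ and indicate that the same reasoning, with $\sigma_X^2$ replaced by $\sigma_\Theta^2$, delivers the result for $\widetilde{R}_\Theta^n$.

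For the finite-dimensional MDP, fix $0 \le t_1 < \cdots < t_k \le T$ and $\lambda_1, \ldots, \lambda_k \in \mathbb{R}$ and consider the normalized cumulant generating function $b_n^{-2}\log \mathbb{E}[\exp(b_n^2 \sum_j \lambda_j \widetilde{R}_X^n(t_j))]$. Assumption \ref{assump:MDP_W}(ii) ensures that the MGF of $X_0^n - \mu_n$ is finite and uniformly bounded on some neighborhood $(-a,a)$ of the origin, and since $b_n/\sqrt{n}\to 0$, the rescaled argument $\lambda b_n/\sqrt{n}$ eventually lies in $(-a,a)$ for any fixed $\lambda$. A uniform Taylor expansion of $\log \mathbb{E}[\exp(s(X_0^n - \mu_n))]$ around $s=0$, combined with independence of the increments of $\widetilde{R}_X^n$ and the convergence $\sigma_{X,n}^2 \to \sigma_X^2$, yields
\begin{equation*}
    \lim_{n \to \infty} \frac{1}{b_n^2}\log \mathbb{E}\left[\exp\left(b_n^2 \sum_{j=1}^k \lambda_j \widetilde{R}_X^n(t_j)\right)\right] = \frac{\sigma_X^2}{2}\sum_{l=1}^k (t_l - t_{l-1}) \left(\sum_{j \ge l}\lambda_j\right)^2
\end{equation*}
with $t_0 := 0$. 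Since this limit is finite, smooth, and convex in $(\lambda_1,\ldots,\lambda_k)$, the Gärtner-Ellis theorem produces the finite-dimensional MDP at rate $b_n^2$, and a direct separable optimization identifies the Legendre transform as $\frac{1}{2\sigma_X^2}\sum_l (\phi(t_l) - \phi(t_{l-1}))^2/(t_l - t_{l-1})$.

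For $\mathcal{C}$-exponential tightness in $\mathcal{D}_T$, I would appeal to the characterizations in Appendix \ref{subsec:exp_tightness}. The same uniform MGF bound, together with a Chernoff-type estimate and optimization over $\lambda$, yields a sub-Gaussian tail
\begin{equation*}
    \mathbb{P}\bigl(|\widetilde{R}_X^n(t) - \widetilde{R}_X^n(s)| > \delta\bigr) \le 2\exp\!\left(- \frac{b_n^2 \delta^2}{2 c (t-s)}\right)
\end{equation*}
for some constant $c > 0$, valid uniformly in $n$ and $0 \le s \le t \le T$. This controls the $J_1$ modulus of continuity at the required exponential rate, while the maximum jump is dominated by $(b_n\sqrt{n})^{-1}\max_{i < \lfloor nT\rfloor}|X_i^n - \mu_n|$, which is super-exponentially small at rate $b_n^2$ by a union bound and the uniform MGF bound (using $b_n/\sqrt{n}\to 0$). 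Combining the finite-dimensional MDP with $\mathcal{C}$-exponential tightness via Puhalskii's framework (cf.\ Theorem~6.1 of \cite{PuhW97}) yields the full sample-path MDP; the rate function is identified as $I_X$ by passing to the supremum over partitions of the finite-dimensional rate function, which converts the quadratic sum into the Dirichlet energy $\frac{1}{2\sigma_X^2}\int_0^T |\dot\phi(t)|^2 \mathrm{d}t$ on $\mathcal{AC}_0$ (with $+\infty$ elsewhere). The main technical obstacle is ensuring the Taylor expansion of the log-MGF is valid uniformly in $n$ for the triangular array $\{X_i^n\}$, a point the uniform bound in Assumption \ref{assump:MDP_W}(ii) handles cleanly.
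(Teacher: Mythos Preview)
The paper does not prove this theorem: immediately after stating it, the authors write that the sample-path MDP for random walks ``has been proven under more general conditions'' and cite \cite{AP24} and Theorem~6.1 in \cite{PuhW97}, then ``simply state the result.'' So there is no paper proof to compare against; your sketch supplies what the paper omits.

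Your approach is the standard one underlying those references, and the overall strategy is sound. Two small points to tighten: (i) the sub-Gaussian bound you wrote controls a single increment $|\widetilde{R}_X^n(t)-\widetilde{R}_X^n(s)|$, whereas the modulus condition in Theorem~\ref{thm:C_exp_tightness_in_D} asks for $\sup_{s\in[0,\delta]}|\widetilde{R}_X^n(t+s)-\widetilde{R}_X^n(t)|$; you need to upgrade via an exponential maximal inequality (e.g.\ Doob applied to the exponential submartingale $\exp(\lambda b_n n^{-1/2}\sum_i (X_i^n-\mu_n))$), which the uniform MGF bound in Assumption~\ref{assump:MDP_W}(ii) supports. (ii) For the triangular array $\{X_i^n\}$, the Taylor remainder must be controlled uniformly in $n$; the assumption $\sup_n \mathbb{E}[e^{a X_0^n}]<\infty$ gives a uniform bound on third absolute moments and hence on the remainder, so this goes through as you indicate.
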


To state the main results, we briefly introduce several continuous mappings on \(\mathcal{D}_T\). For \( x \in \mathcal{D}_{T}\), let \( \mathcal{M}_{\theta}(x) = u \) denote the solution  to the integral equation 
\[
 u(t) = x(t ) - \int_{0}^{t} \theta u(s) \, ds, \qquad t \in [0,T].   
\]
We use \((\mathcal{R} , \mathcal{R}' ) \) to denote the conventional one-dimensional Skorokhod map (see \cite{ChenY01} Chapter 6.2) and let \((\mathcal{R}_{\theta}, \mathcal{R}'_{\theta} )(x) = (z,l)\)  denote the one-dimensional linearly generalized Skorokhod map, which satisfies
\[
z(t) = x(t) - \int_{0}^{t} \theta z(s) \, ds + l(t) \ge 0, \qquad t \in [0,T], 
\]
with \(l\) being non-decreasing, \(l(0)=0\), and \( \int_{0}^t z(s) \mathrm{d}l(s)=0\) for all \(t \in [0,T]\). We note here that \(\mathcal{M}_{\theta}\), \((\mathcal{R}_{\theta},\mathcal{R}'_{\theta})\) and \((\mathcal{R} ,\mathcal{R}' )\) are well-defined and continuous under the \(J_{1}\) topology. We refer the reader to Appendix \ref{subsec:LGRM} for further details and proofs of their continuity properties. Now, we state the main results of this paper.

\begin{theorem}
    \label{thm:MDP_W}
    Under Assumptions \ref{assump:model} and \ref{assump:MDP_W}, the family \( \{ \widetilde{W}^{n},\ n \in \mathbb{N} \} \) satisfies an MDP in \(\mathcal{D}_T\) with rate \( b_n^{2} \) and rate function \(I\), where  
    \begin{enumerate}
        \item[(i)] if \( \mu > 0,\ \theta > 0 \) and \( \bar{W}^* = \mu / \theta \), then  
        \[
            I(\phi) = \inf_{\substack{\psi_1, \psi_2 \in \mathcal{D}_T, \\ \phi  = \mathcal{M}_\theta (w_0 + \psi_1 - \frac{\mu}{\theta} \psi_2 + r\mathfrak{e}).}}   I_X(\psi_1) + I_\Theta(\psi_2) ;
        \]
        \item[(ii)] if \( \mu = 0,\ \theta \ge 0 \) and \( \bar{W}^* = 0 \), then 
        \[
            I(\phi) = \inf_{\substack{\psi_1 \in \mathcal{D}_T, \\ \phi  = \mathcal{R}_\theta (w_0 + \psi_1  + r\mathfrak{e}).}}   I_X(\psi_1);
        \] 
    \end{enumerate}
    Furthermore, for the case where \( \mu < 0 \), \(\bar{W}^* = 0\) and \(w_{0 } = 0\), we have 
    \[
        \widetilde{W}^{n} \stackrel{P ^{1/b_n^{2}}}{\longrightarrow} 0. 
    \]
\end{theorem}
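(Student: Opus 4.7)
The plan is to leverage the moderate deviation machinery developed in Section \ref{sec:MDP_LRMS} for the unreflected linearly recursive Markov chain and combine it with a reflection analysis to handle the positive-part operator in \eqref{eq:recursion_W}. Starting from \eqref{eq:process_W}, subtracting $n\bar{W}^*$ and dividing by $b_n\sqrt{n}$, I expect a representation of the form
\[
    \widetilde{W}^n(t) = \widetilde{W}_0^n + \widetilde{R}_X^n(t) - \bar{W}^*\,\widetilde{R}_\Theta^n(t) - \theta \int_0^t \widetilde{W}^n(s)\,ds + r\,\mathfrak{e}(t) + \widetilde{L}^n(t) + \widetilde{\varepsilon}^n(t),
\]
where $\widetilde{R}_X^n$ and $\widetilde{R}_\Theta^n$ are the MD-scaled random walks in \eqref{eq:RW_MD_scaling}, $\widetilde{L}^n$ is the MD-scaled regulator, and $\widetilde{\varepsilon}^n$ collects discretization remainders, the deviation of the randomly weighted partial sum $\sum_i(C_i^n{-}1)W_i^n$ from its centered form, and the $(\mu_n-\mu)$-drift error. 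This is the analogue of the representation \eqref{eq:md_representation} advertised in the introduction.

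The next task is to show $\widetilde{\varepsilon}^n \stackrel{P^{1/b_n^2}}{\longrightarrow} 0$ in $\mathcal{D}_T$. Following the roadmap from the unreflected setting, I would first establish exponential stochastic boundedness of the fluid-scaled workload $\bar{W}^n$ (Lemma \ref{lem:W_bar_exp_stocastic_bdd}) by sandwiching it between two instances of the chain \eqref{eq:intro_markov_chain} with appropriately chosen initial data; the exponential-moment Assumption \ref{assump:MDP_W}(ii) is what makes the resulting bounds super-exponential at rate $b_n^2$. With $\bar{W}^n$ so controlled, the chain of results behind Lemmas \ref{lem:exp_bounded_Vbar}--\ref{lem:eps_V_expeq_0}, Theorem \ref{thm:exp_tightness_V} and Corollary \ref{cor:V_bar} transfers to the reflected setting; the genuinely delicate piece is controlling the randomly weighted random walk, which is handled by an exponential martingale inequality of Freedman/Burkholder type applied after localization on the super-exponential bound for $\bar{W}^n$, combined with the $\mathcal{C}$-exponential tightness characterization of Appendix \ref{subsec:exp_tightness}.

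Given the negligibility of $\widetilde{\varepsilon}^n$, the two asserted rate functions correspond to applying the contraction principle through two different continuous maps. In case (i), $\bar{W}^* = \mu/\theta>0$, so the event that $\bar{W}^n$ approaches $0$ on $[0,T]$ is super-exponentially rare under the same stochastic-bound argument; hence $\widetilde{L}^n \stackrel{P^{1/b_n^2}}{\longrightarrow} 0$ and the surviving unreflected linear identity is solved by $\mathcal{M}_\theta$, so the joint MDP for $(\widetilde{R}_X^n,\widetilde{R}_\Theta^n)$ with rate $I_X+I_\Theta$ (by independence and Theorem \ref{thm:MDP_RW}), combined with exponential equivalence and the contraction principle, yields the stated infimum formula. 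In case (ii), $\bar{W}^*=0$ and the regulator is genuinely active; the $\widetilde{R}_\Theta^n$ term drops out because its coefficient $\bar{W}^*$ vanishes, and the oblique-reflection identity \eqref{eq:L_regulator}, together with uniqueness of the linearly generalized Skorokhod problem (Appendix \ref{subsec:LGRM}), identifies $\widetilde{W}^n$ as $\mathcal{R}_\theta(w_0+\widetilde{R}_X^n+r\mathfrak{e})$ up to $\widetilde{\varepsilon}^n$, so the contraction principle delivers the formula in (ii).

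The convergence for $\mu<0$ is a by-product: Section \ref{sec:fluid_analysis} identifies $\bar{W}^*=0$ as the globally attracting fluid fixed point, and the same exponential stochastic-boundedness argument, now used to pinch $\bar{W}^n$ between two sub-recursions of \eqref{eq:intro_markov_chain} that both decay to $0$, upgrades this to $\widetilde{W}^n \stackrel{P^{1/b_n^2}}{\longrightarrow} 0$. The main technical obstacle across all three parts is the randomly weighted partial sum inside $\widetilde{\varepsilon}^n$; everything else reduces, via the strategy above, to the MDP for plain random walks and to the continuity of $\mathcal{M}_\theta$ and $\mathcal{R}_\theta$ on $\mathcal{D}_T$.
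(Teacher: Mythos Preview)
Your outline for cases (i) and (ii) matches the paper's argument in Section~\ref{sec:MDP_W} essentially point for point: the representation~\eqref{eq:md_representation}, the super-exponential negligibility of the error terms via Lemma~\ref{lem:W_bar_exp_stocastic_bdd} and the martingale localization technique of Lemma~\ref{lem:exp_tight_esp_V_2}, the argument that $\widetilde{L}^n \stackrel{P^{1/b_n^2}}{\to} 0$ when $\bar{W}^* > 0$, and the contraction principle through $\mathcal{M}_\theta$ or $\mathcal{R}_\theta$.

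For $\mu < 0$, however, there is a gap. The sandwich argument you invoke establishes exponential stochastic boundedness of $\bar{W}^n$ (this is exactly Lemma~\ref{lem:W_bar_exp_stocastic_bdd}), but it does \emph{not} upgrade to $\widetilde{W}^n = \frac{\sqrt{n}}{b_n}\bar{W}^n \stackrel{P^{1/b_n^2}}{\to} 0$. The bounding unreflected chains $V^n$ and $U^n$ do not ``decay to $0$'' in any MD-useful sense: for $\theta > 0$ their fluid fixed point is $\mu/\theta < 0$, and for $\theta \le 0$ they are fluid-unstable, so neither $\frac{\sqrt{n}}{b_n}\bar{V}^n$ nor $\frac{\sqrt{n}}{b_n}\bar{U}^n_\uparrow$ is super-exponentially small. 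The paper's actual argument proceeds differently: it writes $\widetilde{W}^n = \mathcal{R}_\theta\bigl(\widetilde{\Phi}^n + \frac{\sqrt{n}}{b_n}\mu\,\mathfrak{e}\bigr)$ with $\widetilde{\Phi}^n$ shown to be $\mathcal{C}$-exponentially tight, localizes on $\{\|\bar{W}^n\|_T < \epsilon\}$ (with $\epsilon$ small enough that $\mu + |\theta|\epsilon < 0$) to replace $\mathcal{R}_\theta$ by the standard Skorokhod map $\mathcal{R}$ via the comparison Lemma~\ref{lem:Skorokhod_comparison}, and then invokes Lemma~\ref{lem:reflection_expequiv_0}, which shows that $\mathcal{R}(x_n + c_n\mathfrak{e}) \stackrel{P^{1/a_n}}{\to} 0$ whenever $\{x_n\}$ is $\mathcal{C}$-exponentially tight and $c_n \to -\infty$. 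That last lemma is proved by a last-exit-time decomposition combined with the random time-change Theorem~\ref{thm:time_change}, not by any sandwich between unreflected recursions.
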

The rate functions in Theorem \ref{thm:MDP_W} are given as optimization problems. Due to the simple structures of the rate functions in Theorem \ref{thm:MDP_RW}, our next result shows that these optimization problems can be explicitly solved. 

\begin{theorem}
    \label{thm:MDP_W_explicit_rate_fn}
    Under Assumptions \ref{assump:model} and \ref{assump:MDP_W},  
    the rate functions in Theorem \ref{thm:MDP_W} take the following form:  
    \begin{enumerate}
        \item[(i)] suppose \( \mu > 0,\ \theta > 0 \) and \( \bar{W}^* = \mu / \theta \), then 
        \begin{equation*}
            I(\phi) = \frac{\theta^{2}}{2(\theta^{2}\sigma^{2}_{X}+ \mu^{2} \sigma^{2}_{\Theta})} \int_{0}^{T} (\dot{\phi}(t) - r + \theta \phi(t))^{2} \mathrm{d} t\,,
        \end{equation*}
        for $\phi \in \mathcal{AC}$ with  \(\phi(0) = w_0\). Otherwise, $I(\phi) = \infty$. 
        
        \item[(ii)] suppose \( \mu = 0,\ \theta \ge 0 \) and \( \bar{W}^* = 0 \), then 
        \begin{equation*}
            I(\phi) = \frac{1}{2\sigma^{2}_{X}}\int_{0}^{T}  1_{ \{ \phi(t) > 0 \}} (\dot{\phi}(t) - r + \theta \phi(t))^{2} \mathrm{d}t + \frac{1_{ \{ r > 0 \}}r^{2}}{2\sigma^{2}_{X}}  \int_{0}^{T} 1_{ \{ \phi(t) = 0 \}}  \mathrm{d}t\,, 
        \end{equation*} 
        for $\phi \in \mathcal{AC}$ with \(\phi\) non-negative and \(\phi(0) = w_0\). Otherwise, $I(\phi) = \infty$. 
    \end{enumerate}

\end{theorem}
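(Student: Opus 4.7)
My plan is to reduce the infimum in Theorem \ref{thm:MDP_W} to a pointwise calculus-of-variations problem. Since $I_X$ and $I_\Theta$ are only finite on $\mathcal{AC}_0$, the infimum is automatically restricted to pairs $(\psi_1,\psi_2)$ of absolutely continuous paths starting at $0$, and the outputs of the linear maps $\mathcal{M}_\theta$ and $\mathcal{R}_\theta$ are then absolutely continuous as well. This immediately shows that $I(\phi)=\infty$ whenever $\phi\notin \mathcal{AC}$ with $\phi(0)=w_0$ and $\phi\ge 0$, taking care of the ``otherwise'' clause.

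For part (i), recall that $\mathcal{M}_\theta$ (no reflection) is the solution map of the linear ODE
\[
\phi(t) = \xi(t) - \theta\int_0^t \phi(s)\,\mathrm{d}s, \qquad \xi(t) := w_0 + \psi_1(t) - \tfrac{\mu}{\theta}\psi_2(t) + rt.
\]
Differentiating gives the pointwise constraint
\[
\dot{\psi}_1(t) - \tfrac{\mu}{\theta}\dot{\psi}_2(t) \;=\; \dot{\phi}(t) + \theta\phi(t) - r \;=: \;f(t),\quad\text{a.e.}
\]
So the infimum decouples time-wise: for each $t$, I minimize $\frac{a^2}{2\sigma_X^2}+\frac{b^2}{2\sigma_\Theta^2}$ over $(a,b)\in\mathbb{R}^2$ subject to $a-\tfrac{\mu}{\theta}b = f(t)$. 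This is a one-constraint quadratic minimization; a Lagrange multiplier (or equivalently Cauchy--Schwarz) computation gives the optimal value
\[
\frac{\theta^2 f(t)^2}{2(\theta^2\sigma_X^2 + \mu^2\sigma_\Theta^2)}.
\]
Integrating over $[0,T]$ yields the claimed expression. It remains to note that the pointwise optimizers $a^*(t),b^*(t)$ are measurable and square-integrable (so the resulting $\psi_1^*,\psi_2^*$ lie in $\mathcal{AC}_0$), ensuring the infimum is attained and not merely bounded above.

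For part (ii), $\mathcal{R}_\theta$ is the linearly generalized Skorokhod map: $\phi = \mathcal{R}_\theta(\xi)$ means
\[
\phi(t) = \xi(t) - \theta\int_0^t \phi(s)\,\mathrm{d}s + \ell(t),\quad \phi\ge 0,\ \ell\ \text{nondecreasing},\ \ell(0)=0,\ \int_0^T \phi\,\mathrm{d}\ell = 0,
\]
with $\xi(t)=w_0+\psi_1(t)+rt$. Differentiating gives $\dot\psi_1(t) = \dot\phi(t)+\theta\phi(t)-r-\dot\ell(t)$ a.e.\ where $\ell$ is absolutely continuous, and in general $\dot\ell\ge 0$ is supported on $\{\phi=0\}$. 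I split the integral $\frac{1}{2\sigma_X^2}\int_0^T \dot\psi_1^2\,\mathrm{d}t$ according to $\{\phi>0\}$ and $\{\phi=0\}$. On $\{\phi>0\}$ the regulator is inactive, so $\dot\psi_1 = \dot\phi+\theta\phi-r$ is forced and contributes $\frac{1}{2\sigma_X^2}(\dot\phi+\theta\phi-r)^2$. On $\{\phi=0\}$, the fact that $\phi$ is absolutely continuous and vanishes on this set forces $\dot\phi=0$ a.e., so $\dot\psi_1 = -r-\dot\ell$ with the free parameter $\dot\ell\ge 0$. Minimizing $(-r-\dot\ell)^2$ over $\dot\ell\ge 0$ gives $0$ when $r\le 0$ (take $\dot\ell=-r$) and $r^2$ when $r>0$ (take $\dot\ell=0$), producing the indicator $\mathbf{1}_{\{r>0\}}$. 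Summing the two contributions gives the stated expression.

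The only nontrivial step I anticipate is verifying the decoupling in part (i) and the measurable selection of the pointwise minimizers so that the candidate $\psi_1^*,\psi_2^*$ genuinely satisfy the constraint equation in the infimum; this is standard once one observes that the optimizers depend linearly (hence measurably) on $f\in L^2([0,T])$ through the formulas $a^*(t) = \frac{\theta^2\sigma_X^2\,f(t)}{\theta^2\sigma_X^2+\mu^2\sigma_\Theta^2}$ and $b^*(t) = -\frac{\mu\theta\sigma_\Theta^2\,f(t)}{\theta^2\sigma_X^2+\mu^2\sigma_\Theta^2}$. For part (ii), the subtle point is justifying $\dot\phi=0$ a.e.\ on $\{\phi=0\}$ for $\phi\in\mathcal{AC}$, which is a standard fact about absolutely continuous functions and allows the $\{\phi=0\}$ integrand to be computed solely from $\dot\ell$ and $r$.
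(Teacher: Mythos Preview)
Your proposal is correct and follows essentially the same route as the paper: restrict to $\psi_i\in\mathcal{AC}_0$, differentiate the defining equation for $\mathcal{M}_\theta$ or $\mathcal{R}_\theta$ to obtain a pointwise linear constraint, and solve the resulting one-line quadratic program a.e.\ in $t$ (splitting on $\{\phi>0\}$ versus $\{\phi=0\}$ in the reflected case). The paper invokes Lemma~A.1 of \cite{FHP25} for the facts that $\ell\in\mathcal{AC}_0$ and $\dot\phi=0$ a.e.\ on $\{\phi=0\}$, which you identify as standard properties of absolutely continuous functions; your explicit formulas for the pointwise optimizers match the paper's verbatim.
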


Being able to obtain explicit rate functions in Theorem \ref{thm:MDP_W_explicit_rate_fn} is useful for estimating rare event probabilities, which we illustrate through the next example. 

\begin{example}

Consider an overloaded and feedback-damping system ($ \mu > 0 $ and $ \theta > 0 $) with 
\[
    \mathfrak{A}'^{,n}_{i} = \mathfrak{A}^{n}_{i} + \frac{1}{n} \Theta_{i} W^{n}_{i}, \quad   \quad \mathfrak{S}'^{,n}_{i} = \mathfrak{S}^{n}_{i}.  
\]
In this model, long waiting times slow down future arrivals, possibly through customer balking, deferrals, or routing away from the system by the controller. By Theorem \ref{thm:MDP_W_explicit_rate_fn}(i), assuming $ w_0 = 0 $ and $ r = 0 $ for simplicity, we have the approximation 
\begin{equation} \label{eq:toy_example_approximation}
    \mathbb{P} \left( \sup_{0\le t\le T} W_{\lfloor nt\rfloor}^n \ge n\frac{\mu}{\theta}+a b_n\sqrt n \right) \approx \exp \left\{ -b_n^2 \inf_{\phi\in\mathcal A_a} I(\phi) \right\}, 
\end{equation}
where  
\[
    \mathcal A_a =
    \left\{
        \phi\in \mathcal{AC}_{0}:\sup_{0\le t\le T}\phi(t)\ge a
    \right\}. 
\]
Further, computation of the infimum in \eqref{eq:toy_example_approximation} can be formulated as a variational problem. Let $ \sigma^{2} = \sigma^{2}_{X} + (\mu / \theta)^{2} \sigma^{2}_{\Theta} $. Then  
\begin{equation}
    \inf_{\phi \in \mathcal{A}_{a}} I(\phi) = J_{a}(T) = \inf_{0 < \tau \le T} \inf_{\phi, u} \left\{ \frac{1}{2 \sigma^{2}} \int_{0}^{\tau} u^{2}(t) \mathrm{d}t:\ \dot{\phi}(t) = u(t)  - \theta \phi(t),\ \phi(0) = 0,\ \phi(\tau) = a   \right\}. 
\end{equation} 
When viewed as an optimal control problem, consider the controlled dynamics 
\[
    \dot{\phi}(t ) = u(t)  - \theta \phi(t), \quad \phi(0 ) = 0,  
\]
with $ u(\cdot) $ being the control. The Hamiltonian is 
\[
    H(\phi, u , p ) = \frac{1}{2 \sigma^{2}} u^{2} + p (  - \theta \phi + u ).  
\]
By Pontryagin maximum principle, the adjoint equation satisfies $ \dot{p}(t) = \theta p(t) $, which yields the solution 
\[
    p(t ) = C e^{\theta t}, 
\]
with $ C $ being some constant. Optimizing the Hamiltonian yields the optimal control as 
\[
    u(t) = - \sigma^{2} p(t) = - C \sigma^{2} e^{\theta t},
\]
and the optimal trajectory can be solved as having the form 
\[
    \phi(t ) =   - \frac{C \sigma^{2}}{\theta} \sinh (\theta t).   
\]
Suppose such a trajectory $ \phi_{\tau} $ hits level $ a $ at time $ \tau $ (i.e.\ $ \phi_\tau(\tau) = a $), then for this trajectory, 
\[
    C_{\tau} = \frac{ - a \theta}{ \sigma^{2} \sinh(\theta \tau)},  \quad  \quad u_{\tau}(t) =   \frac{a \theta  }{ \sinh(\theta \tau )  }  e^{\theta t}, 
\]
and we can compute the optimal cost for hitting $ a $ at time $ \tau $ as 
\[
    J_{a, \tau} = \frac{ a^{2}\theta  }{ \sigma^{2} (1 - e^{-2 \theta \tau})}.  
\]
Observe that $ J_{a, \tau} $ is decreasing in $ \tau $. Therefore
\[
    J_{a}(T) = \inf_{0 < \tau \le T} J_{a, \tau} = \frac{ a^{2}\theta  }{ \sigma^{2} (1 - e^{-2 \theta T})}. 
\]
Plugging the solution into \eqref{eq:toy_example_approximation}, we obtain the following explicit rare event probability formula 
\[
    \mathbb{P} \left( \sup_{0\le t\le T} W_{\lfloor nt\rfloor}^n \ge n\frac{\mu}{\theta}+a b_n\sqrt n \right) \approx \exp \left\{ -b_n^2 \frac{ a^{2}\theta^{3} }{ (\theta^{2} \sigma^{2}_{X} + \mu^{2} \sigma^{2}_{\Theta}) (1 - e^{-2 \theta T})} \right\}. 
\]

\end{example}

\section{Fluid Analysis} \label{sec:fluid_analysis}

\subsection{Fluid Limit} \label{subsec:fluid_limit}

Consider the fluid-scaled process \( \bar{W}^{n} \) given by \eqref{eq:fluid_scaling_W}. 
With a slight abuse of notation, we shall also write $\bar{W}^{n}_i = n^{-1} W^{n}_i$.   
Starting with \eqref{eq:process_W}, we can approximate the sum involving $\bar{W}^{n}_i$ by an integral:  
\begin{equation}
    \bar{W}^{n}(t) = \bar{W}^{n}_0 + \frac{1}{n} \sum_{i=0}^{\lfloor nt \rfloor -1} X_i^{n} - \int_{0}^{t} \theta \bar{W}^{n}(s) ds  + \bar{\epsilon}^{1,n}(t) + \bar{\epsilon}^{2,n}(t) +  \frac{1}{n}  L^{n}_{\lfloor nt \rfloor - 1} , \label{eq:W_fluid_representation}
\end{equation}
while introducing two error terms given by  
\[
\begin{aligned}
    \bar{\epsilon}^{1,n}(t) &= \theta \left(  \int_{0}^{t} \bar{W}^{n}(s) ds - \frac{1}{n} \sum_{i=0}^{\lfloor nt \rfloor -1} \bar{W}^{n}_i \right), \\
    \bar{\epsilon}^{2,n}(t) &= \frac{1}{n} \sum_{i=0}^{\lfloor nt \rfloor -1} (\theta - \Theta_i)  \bar{W}^{n}_i.
\end{aligned} 
\]
We can simplify \eqref{eq:W_fluid_representation} using the linearly generalized Skorokhod mapping \( (\mathcal{R}_\theta, \mathcal{R}'_\theta) \). Let $ \bar{L}^{n}(t) := n^{-1}L^{n}_{\left\lfloor nt \right\rfloor-1} $. Then \eqref{eq:L_regulator} implies that $ \bar{L}^{n}(0) = 0 $, $ \bar{L}^{n}(t) \ge 0 $ and $ \int_{0}^{t} \bar{W}^{n}(s )  \mathrm{d} \bar{L}^{n}(s ) = 0 $, for all $ t \in [0,T] $. Since $ \bar{W}^{n} \ge 0 $, it follows that 
\begin{equation}\label{eq:fluid_W_reflected}
    (\bar{W}^{n}, \bar{L}^{n})  = (\mathcal{R}_\theta, \mathcal{R}_\theta') \bigg( \bar{W}^{n}_0 + \frac{1}{n} \sum_{i=0}^{\left\lfloor n \cdot \right\rfloor-1} X_i^{n}  + \bar{\epsilon}^{1,n} + \bar{\epsilon}^{2,n}  \bigg). 
\end{equation}

The next lemma shows that the error terms are asymptotically negligible. For the proof, see Appendix \ref{subsec:pf_fluid_error}. 
\begin{lemma} \label{lem:fluid_error}
   Let Assumption \ref{assump:model} hold and $\bar{W}^n_0 \to w_0$ in $L^2$ as $n \to \infty$, then \( \|\bar{\epsilon}^{1,n}\|_{T} \) and \( \|\bar{\epsilon}^{2,n}\|_{T} \) converge to \( 0 \)  in probability.   
\end{lemma}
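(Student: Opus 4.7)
The plan is to treat the two error terms separately, since they require different ideas: $\bar{\epsilon}^{1,n}$ is a deterministic Riemann-sum approximation error, while $\bar{\epsilon}^{2,n}$ is a martingale increment. Both ultimately reduce to a uniform (in $n$) control on the size of the fluid-scaled workload $\bar{W}^n$ over $[0,T]$, in probability for the first term and in $L^2$ for the second. To that end, my first step would be to derive the deterministic pathwise bound
\[
W^n_{i+1} = (C^n_i W^n_i + X^n_i)^+ \le W^n_i\bigl(1 + |\Theta_i|/n\bigr) + |X^n_i|,
\]
iterate it, and control the resulting products by $\exp\!\bigl(\tfrac1n\sum_{k=0}^{nT-1}|\Theta_k|\bigr)$. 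Combined with the LLN applied to $\tfrac{1}{n}\sum_{k=0}^{nT-1}|\Theta_k|\to T\,\mathbb E|\Theta|$ and $\tfrac{1}{n}\sum_{j=0}^{nT-1}|X^n_j|\to T\,\mathbb E|X|$ in probability, together with $\bar W_0^n\to w_0$ in $L^2$, this yields that $\sup_{t\in[0,T]}\bar W^n(t)$ is stochastically bounded.

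For $\bar{\epsilon}^{1,n}$, since $\bar W^n(s)=\bar W^n_{\lfloor ns\rfloor}$ is piecewise constant on intervals of length $1/n$, a direct calculation gives
\[
\bar{\epsilon}^{1,n}(t) = \theta\bigl(t - \lfloor nt\rfloor/n\bigr)\,\bar W^n_{\lfloor nt\rfloor},
\qquad \sup_{t\in[0,T]} |\bar{\epsilon}^{1,n}(t)| \le \frac{\theta}{n}\sup_{t\in[0,T]}\bar W^n(t),
\]
so the stochastic boundedness from the previous step immediately produces u.o.c.\ convergence to $0$ in probability. For $\bar{\epsilon}^{2,n}$, the plan is to notice that $W^n_i$ (hence $\bar W^n_i$) is $\mathcal F^n_{i-1}$-measurable while $\Theta_i$ is independent of $\mathcal F^n_{i-1}$ with mean $\theta$, so $M^n_k := \sum_{i=0}^{k-1}(\theta-\Theta_i)\bar W^n_i$ is an $(\mathcal F^n_i)$-martingale with orthogonal increments. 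Applying Doob's $L^2$ maximal inequality gives
\[
\mathbb E\Bigl[\sup_{t\in[0,T]}\bigl(\bar{\epsilon}^{2,n}(t)\bigr)^{2}\Bigr] \le \frac{4\sigma_{\Theta}^{2}}{n^{2}}\sum_{i=0}^{\lfloor nT\rfloor-1}\mathbb E\bigl[(\bar W^n_i)^{2}\bigr],
\]
which is $O(1/n)$ provided $\sup_{n,\,i\le nT}\mathbb E[(\bar W^n_i)^2]<\infty$.

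The main obstacle is therefore the uniform second-moment bound on $\bar W^n_i$. To handle this I would square the recursion, use the independence of $(\Theta_i,X^n_i)$ from $\mathcal F^n_{i-1}$ together with the uniform-in-$n$ moment bounds on $X^n_0$ that follow from $\sigma^2_{X,n}\to\sigma^2_X$, and apply AM--GM to dominate the cross term $2\mathbb E[W^n_i|X^n_0|]$ by $\tfrac{1}{n}\mathbb E[(W^n_i)^2] + O(n)$. This yields a recursion of the form
\[
\mathbb E[(W^n_{i+1})^{2}] \le \bigl(1 + c/n\bigr)\mathbb E[(W^n_i)^{2}] + O(n),
\]
which, combined with $\mathbb E[(W^n_0)^2]=O(n^2)$ from Assumption \ref{assump:MDP_W} (i) (actually from the $L^2$ convergence hypothesis here), gives $\mathbb E[(W^n_i)^2]=O(n^2)$ uniformly for $i\le nT$ via a discrete Grönwall argument, i.e.\ $\mathbb E[(\bar W^n_i)^2]=O(1)$. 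Plugging this back into the Doob bound closes the argument for $\bar{\epsilon}^{2,n}$ and completes the proof.
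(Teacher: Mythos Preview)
Your proposal is correct and follows the same overall architecture as the paper: both arguments hinge on a uniform bound $\sup_{n}\sup_{i\le nT}\mathbb E[(\bar W^n_i)^2]<\infty$, then use the exact Riemann-remainder identity for $\bar\epsilon^{1,n}$ and a maximal inequality for $\bar\epsilon^{2,n}$. The implementation differs in two places. First, for the second-moment bound the paper introduces an auxiliary dominating process $\bar Y^n_{i+1}=(C^n_i)^+\bar Y^n_i+\tfrac1n(X^n_i)^+$ (Lemma~\ref{lem:estimate_Y}) and runs a two-stage discrete Gr\"onwall (first moments, then second moments), whereas you square the recursion directly and absorb the cross term via AM--GM before taking expectations; your route avoids the first-moment detour and is slightly shorter. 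Second, for $\bar\epsilon^{2,n}$ the paper invokes Billingsley's Theorem~10.2 (a Kolmogorov-type maximal inequality for sums with orthogonal increments), while you use Doob's $L^2$ inequality after observing that $\sum_{i\le k}(\theta-\Theta_i)\bar W^n_i$ is an $(\mathcal F^n_k)$-martingale; the martingale route is arguably the cleaner way to package the same orthogonality. One cosmetic point: your displayed bound for $\bar\epsilon^{1,n}$ should carry $|\theta|$ rather than $\theta$.
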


Now we are ready for the fluid limit result. 

\begin{theorem}\label{thm:fluid_limit}
    Let Assumption \ref{assump:model} hold and $\bar{W}^n_0 \to w_0$ in $L^2$ as $n \to \infty$, then 
    \[
        \|\bar{W}^{n} - \bar{W} \|_{T} \to 0 \quad  \text{ in probability},
    \]
    where 
    \begin{equation}\label{eq:fluid_model}
        \bar{W} = \mathcal{R}_\theta \left( w_0 + \mu \mathfrak{e}   \right).
    \end{equation}
\end{theorem}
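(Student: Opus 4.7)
The plan is to derive the fluid limit directly from the representation in \eqref{eq:fluid_W_reflected} via the continuous mapping theorem applied to $\mathcal{R}_\theta$. Write
\[
    Y^n(t) := \bar{W}_0^n + \frac{1}{n} \sum_{i=0}^{\lfloor nt \rfloor - 1} X_i^n + \bar{\epsilon}^{1,n}(t) + \bar{\epsilon}^{2,n}(t),
\]
so that $\bar{W}^n = \mathcal{R}_\theta(Y^n)$ by \eqref{eq:fluid_W_reflected}. The goal reduces to showing $Y^n \to w_0 + \mu \mathfrak{e}$ u.o.c.\ in probability, then invoking continuity of $\mathcal{R}_\theta$ (recorded in Appendix \ref{subsec:LGRM}) to conclude $\bar{W}^n \to \mathcal{R}_\theta(w_0 + \mu \mathfrak{e}) = \bar{W}$ u.o.c.\ in probability.

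For the three pieces of $Y^n$: the assumption $\bar{W}_0^n \to w_0$ in $L^2$ yields in particular convergence in probability to the constant process $w_0$; the error terms $\bar{\epsilon}^{1,n}$ and $\bar{\epsilon}^{2,n}$ vanish u.o.c.\ in probability by Lemma \ref{lem:fluid_error}; and it remains to handle the centered random walk term. I would split the random walk as
\[
    \frac{1}{n} \sum_{i=0}^{\lfloor nt \rfloor - 1} X_i^n = \frac{1}{n} \sum_{i=0}^{\lfloor nt \rfloor - 1} (X_i^n - \mu_n) + \frac{\lfloor nt \rfloor}{n} \mu_n.
\]
The deterministic part converges to $\mu t$ u.o.c.\ since $\mu_n \to \mu$ under Assumption \ref{assump:model} (iii). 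The centered part is a triangular-array random walk with mean zero and uniformly bounded variance $\sigma^2_{X,n} \to \sigma_X^2 < \infty$, so by Doob's inequality applied to the martingale $M^n_k = \sum_{i=0}^{k-1}(X_i^n - \mu_n)$,
\[
    \mathbb{P}\Big( \sup_{t \le T} \Big| \frac{1}{n} \sum_{i=0}^{\lfloor nt \rfloor - 1}(X_i^n - \mu_n) \Big| > \delta \Big) \le \frac{\mathbb{E}[(M^n_{\lfloor nT \rfloor})^2]}{(n\delta)^2} \le \frac{T\sigma^2_{X,n}}{n\delta^2} \to 0,
\]
giving u.o.c.\ convergence in probability to zero. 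Adding up the three pieces gives $Y^n \to w_0 + \mu\mathfrak{e}$ u.o.c.\ in probability.

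Finally, the limit $w_0 + \mu\mathfrak{e}$ lies in $\mathcal{C}_T$, so u.o.c.\ convergence of $Y^n$ is equivalent to $J_1$-convergence in probability in $\mathcal{D}_T$; applying the continuous map $\mathcal{R}_\theta$ and the continuous mapping theorem delivers \eqref{eq:fluid_model}. Given that Lemma \ref{lem:fluid_error} has already done the heavy lifting for the error terms and that Appendix \ref{subsec:LGRM} supplies continuity of $\mathcal{R}_\theta$, no step here is a real obstacle; the only mild subtlety is the FLLN for the triangular array $\{X_i^n\}$, which is dispatched by the second-moment bound above.
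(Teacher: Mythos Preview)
Your proposal is correct and follows essentially the same route as the paper: represent $\bar{W}^n = \mathcal{R}_\theta(Y^n)$ via \eqref{eq:fluid_W_reflected}, use Lemma~\ref{lem:fluid_error} for the error terms, a triangular-array FWLLN for the random-walk piece, and then the continuous mapping theorem. The only cosmetic difference is that the paper packages the random-walk step as Lemma~\ref{lem:FWLLN} (using Kolmogorov's maximal inequality), whereas you carry out the equivalent Doob/second-moment bound inline.
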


\begin{proof}
    Denote $ \bar{R}^{n}_{X}(t) := n^{-1} \sum_{i=0}^{\left\lfloor nt \right\rfloor-1} X^{n}_{i} $, $ t \in [0,T] $. By Lemma \ref{lem:FWLLN}, $ \|\bar{R}^{n}_{X} - \mu \mathfrak{e}\|_{T} \to 0 $ in probability.  
    Then by \eqref{eq:fluid_W_reflected}, Lemma \ref{lem:fluid_error} and an application of the continuous mapping theorem, we obtain the desired fluid limit. 
    This concludes the proof. 
\end{proof}

\subsection{Stability of the fluid limit equation} \label{sec:fluid_stability}

To determine the centering term \( \bar{W}^*\) that appears in \eqref{eq:W_tilde}, we examine the stability of fixed points in the fluid limit. It is useful to write  \eqref{eq:fluid_model} in differential form: 
\begin{equation}\label{eq:fluid_model_differential_form}
    \mathrm{d} \bar{W}(t) = \mu - \theta \bar{W}(t) + \mathrm{d} \bar{L}(t)\,. 
\end{equation}

We separately analyze the behavior of \eqref{eq:fluid_model_differential_form} under the different parameter regimes introduced in Section \ref{subsec:the_model}. Recall that the system is  \textit{overloaded} when \( \mu > 0 \), \textit{critically loaded} when \( \mu = 0 \), and \textit{underloaded} when \( \mu < 0 \). Further, the system is \textit{feedback-damping} when \( \theta > 0 \), \textit{feedback-neutral} when \( \theta = 0 \), and \textit{feedback-amplifying} when \( \theta < 0 \).

\subsubsection*{Overloaded system ($\mu > 0$)}
\begin{enumerate}
    \item[(a)] Consider when $\theta > 0$. In this case, the regulator \( \bar{L} \) for the Skorokhod mapping is never activated. To see this, observe that whenever \( 0 \le  \bar{W}(t) < \mu / \theta \), we have \( \mu - \theta \bar{W}(t) > 0 \), which implies that \( \mathrm{d}\bar{W}(t) > 0 \). This positive drift drives the process upward, preventing it from hitting zero.
    We can therefore determine the fluid limit by solving the unreflected differential equation
    \begin{equation} \label{eq:fluid_limit_diffeq}
        \begin{cases} \frac{\mathrm{d}}{\mathrm{d} t} \bar{W}(t) = \mu - \theta \bar{W}(t),  \\ \bar{W}(0) = w_0. \end{cases}
    \end{equation}
    The solution is given by 
    \begin{equation} \label{eq:fluid_limit_case_1}
        \bar{W}(t) = \frac{\mu}{\theta} + \left( w_0 - \frac{\mu}{\theta} \right)e^{-\theta t}, \quad  t \ge 0.
    \end{equation}
    Taking the limit as \( t \to \infty \), we find that the fluid-scaled waiting time has a stable fixed point at \( \mu / \theta > 0 \).

    \item[(b)] Consider when \( \theta \le  0 \). Relation \eqref{eq:fluid_model_differential_form} implies that \( \mathrm{d} \bar{W}(t) > 0 \). Also since $w_0 \ge 0$, the regulator $\bar{L} \equiv 0$, and we can obtain the fluid equation by solving \eqref{eq:fluid_limit_diffeq}. When $\theta < 0$, the solution is given by \eqref{eq:fluid_limit_case_1}. When $\theta =0$, the fluid equation is $\bar{W}(t) = w_0 + \mu t$. By examining the solutions, we see that there are no stable fixed points. 
\end{enumerate}

\begin{figure}[H]
    \centering 
    \begin{subfigure}{0.3\textwidth}
        \centering
        \begin{tikzpicture}[x=0.75pt,y=0.75pt,yscale=-1,xscale=1]
\useasboundingbox (-5,-100) rectangle (105,20);
\draw [color={rgb, 255:red, 155; green, 155; blue, 155 }  ,draw opacity=1 ] (-10.95, 0) -- (98.55, 0)(0, -93.6) -- (0, 10.4) (91.55, -5) -- (98.55, 0) -- (91.55, 5) (-5, -86.6) -- (0, -93.6) -- (5, -86.6)  ;
\draw    (0.05, -45.85) -- (98.3, -45.85) ;
\draw    (-0.45, -80.35) .. controls (14.3, -51.85) and (20.3, -51.35) .. (97.05, -49.35) ;
\draw    (0.05, -13.1) .. controls (14.3, -41.1) and (19.8, -39.6) .. (97.05, -42.85) ;

\draw (-58.95, -51.2) node [anchor=north west][inner sep=0.75pt]  [font=\scriptsize]  {$w_{0} =\mu /\theta $};
\draw (-18.95, -22.2) node [anchor=north west][inner sep=0.75pt]  [font=\scriptsize]  {$w_{0}$};
\draw (-18.95, -83.2) node [anchor=north west][inner sep=0.75pt]  [font=\scriptsize]  {$w_{0}$};
\draw (-9.15, 2.2) node [anchor=north west][inner sep=0.75pt]  [font=\scriptsize]  {$0$};

\end{tikzpicture}
        \caption{ \( \theta > 0 \)}
    \end{subfigure}
    \begin{subfigure}{0.3\textwidth}
        \centering
        \begin{tikzpicture}[x=0.75pt,y=0.75pt,yscale=-1,xscale=1]
\useasboundingbox (-5,-100) rectangle (105,20);
\draw [color={rgb, 255:red, 155; green, 155; blue, 155 }  ,draw opacity=1 ] (-10.95, 0) -- (98.55, 0)(0, -93.6) -- (0, 10.4) (91.55, -5) -- (98.55, 0) -- (91.55, 5) (-5, -86.6) -- (0, -93.6) -- (5, -86.6)  ;
\draw    (0.02, -30.41) -- (88.59, -58.98) ;

\draw (-10.12, 1.42) node [anchor=north west][inner sep=0.75pt]  [font=\scriptsize]  {$0$};
\draw (-18.69, -37.58) node [anchor=north west][inner sep=0.75pt]  [font=\scriptsize]  {$w_{0}$};

\end{tikzpicture}
        \caption{ \( \theta = 0 \)}
    \end{subfigure}
    \begin{subfigure}{0.3\textwidth}
        \centering
        \begin{tikzpicture}[x=0.75pt,y=0.75pt,yscale=-1,xscale=1]
\useasboundingbox (-5,-100) rectangle (105,20);
\draw [color={rgb, 255:red, 155; green, 155; blue, 155 }  ,draw opacity=1 ] (-10.95, 0) -- (98.55, 0)(0, -93.6) -- (0, 10.4) (91.55, -5) -- (98.55, 0) -- (91.55, 5) (-5, -86.6) -- (0, -93.6) -- (5, -86.6)  ;
\draw    (0.31, -17.56) .. controls (24.59, -21.56) and (75.35, -33.15) .. (80.78, -79.44) ;

\draw (-10.12, 1.42) node [anchor=north west][inner sep=0.75pt]  [font=\scriptsize]  {$0$};
\draw (-20.12, -24.44) node [anchor=north west][inner sep=0.75pt]  [font=\scriptsize]  {$w_{0}$};

\end{tikzpicture}
        \caption{ \( \theta < 0 \)}
    \end{subfigure}
    \caption{Fluid limits for overloaded systems ($ \mu > 0 $). }
\end{figure}
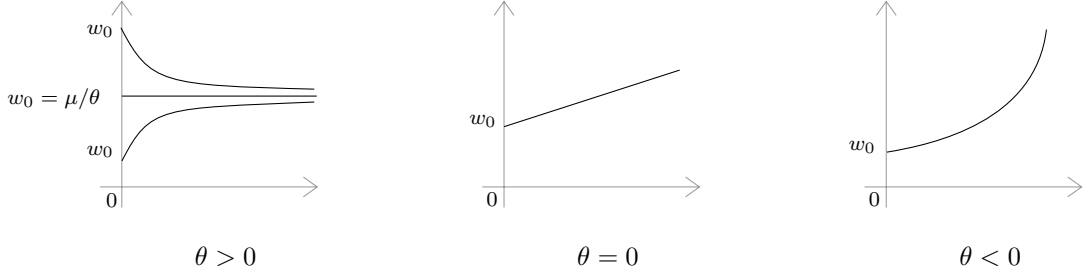

\subsubsection*{Critically loaded system ($\mu = 0$)} \ 

By similar arguments to the overloaded cases, the regulator \( \bar{L} \) is also never activated. Then the fluid limit is again obtained by solving \eqref{eq:fluid_limit_diffeq} and has the form  
\begin{equation}
    \bar{W}(t) =  e^{- \theta t} w_{0} , \quad  t \ge 0. 
\end{equation}
There are two cases that could arise: 
\begin{enumerate}
    \item[(a)] When \( \theta \ge  0 \), the fixed point \( 0 \) is stable.  
    \item[(b)] When \( \theta < 0 \), The fluid limit \( \bar{W} \) goes to infinity asymptotically unless we start at \( 0 \). So there are no stable fixed points. 
\end{enumerate}

\begin{figure}[H]
    \centering 
    \begin{subfigure}{0.3\textwidth}
        \centering
        \begin{tikzpicture}[x=0.75pt,y=0.75pt,yscale=-1,xscale=1]
\useasboundingbox (-5,-100) rectangle (105,20);
\draw [color={rgb, 255:red, 155; green, 155; blue, 155 }  ,draw opacity=1 ] (-10.95, 0) -- (98.55, 0)(0, -93.6) -- (0, 10.4) (91.55, -5) -- (98.55, 0) -- (91.55, 5) (-5, -86.6) -- (0, -93.6) -- (5, -86.6)  ;
\draw    (0.35, -35.95) .. controls (15.1, -7.45) and (21.1, -6.95) .. (97.85, -4.95) ;

\draw (-19.35, -43.6) node [anchor=north west][inner sep=0.75pt]  [font=\scriptsize]  {$w_{0}$};
\draw (-10.55, 0.6) node [anchor=north west][inner sep=0.75pt]  [font=\scriptsize]  {$0$};

\end{tikzpicture}
        \caption{ \( \theta > 0 \)}
    \end{subfigure}
    \begin{subfigure}{0.3\textwidth}
        \centering
        \begin{tikzpicture}[x=0.75pt,y=0.75pt,yscale=-1,xscale=1]
\useasboundingbox (-5,-100) rectangle (105,20);
\draw [color={rgb, 255:red, 155; green, 155; blue, 155 }  ,draw opacity=1 ] (-10.95, 0) -- (98.55, 0)(0, -93.6) -- (0, 10.4) (91.55, -5) -- (98.55, 0) -- (91.55, 5) (-5, -86.6) -- (0, -93.6) -- (5, -86.6)  ;
\draw    (0.02, -30.41) -- (90.31, -30.41) ;
\draw    (0, 0) -- (90.29, 0) ;

\draw (-37.55, 1.42) node [anchor=north west][inner sep=0.75pt]  [font=\scriptsize]  {$w_{0} =0$};
\draw (-18.69, -37.58) node [anchor=north west][inner sep=0.75pt]  [font=\scriptsize]  {$w_{0}$};

\end{tikzpicture}
        \caption{ \( \theta = 0 \)}
    \end{subfigure}
    \begin{subfigure}{0.3\textwidth}
        \centering
        \begin{tikzpicture}[x=0.75pt,y=0.75pt,yscale=-1,xscale=1]
\useasboundingbox (-5,-100) rectangle (105,20);
\draw [color={rgb, 255:red, 155; green, 155; blue, 155 }  ,draw opacity=1 ] (-10.95, 0) -- (98.55, 0)(0, -93.6) -- (0, 10.4) (91.55, -5) -- (98.55, 0) -- (91.55, 5) (-5, -86.6) -- (0, -93.6) -- (5, -86.6)  ;
\draw    (0.31, -17.56) .. controls (24.59, -21.56) and (75.35, -33.15) .. (80.78, -79.44) ;

\draw (-10.12, 1.42) node [anchor=north west][inner sep=0.75pt]  [font=\scriptsize]  {$0$};
\draw (-20.12, -24.44) node [anchor=north west][inner sep=0.75pt]  [font=\scriptsize]  {$w_{0}$};

\end{tikzpicture}
        \caption{ \( \theta < 0 \)}
    \end{subfigure}
    \caption{Fluid limits for critically loaded systems ($ \mu = 0 $). }
\end{figure}
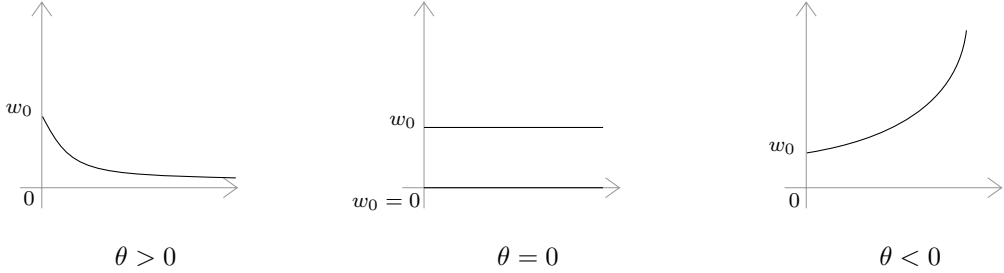

\subsubsection*{Underloaded system ($\mu < 0$).}

\begin{enumerate}
    \item[(a)] Suppose \( \theta < 0 \). In this case, $\mu/\theta$ and $0$ are the two fixed points and their stability depends on the initial condition \( \bar{W}_0 \). 
    
    \begin{enumerate}
        \item[(i)] If \( 0 \le  w_0 < \mu / \theta \), then by \eqref{eq:fluid_model_differential_form} we have \( \mathrm{d} \bar{W}(0) = \mu - \theta w_0 < 0 \). Since the regulator \( \bar{L} \) is not activated until  \( \bar{W} \) hits \( 0 \), the fluid limit again evolves according to \eqref{eq:fluid_limit_case_1}. By direct computation, we see that the fluid limit hits zero at time 
        \begin{equation} \label{eq:fluid_limit_hit_0}
            t_0 = - \frac{1}{\theta} \left[ \ln(\frac{\mu}{\theta}) - \ln (\frac{\mu}{\theta} - w_{0}) \right].
        \end{equation}
        When the process hits 0, the compensator \( \bar{L} \) activates with \( \mathrm{d} \bar{L}(t) = - \mu \), resulting in a stable fixed point of 0. Together, the trajectory of the fluid limit is given by 
        \begin{equation} \label{eq:fluid_limit_case_2a}
            \bar{W}(t) = 
            \begin{cases} \frac{\mu}{\theta} + \left( w_0 - \frac{\mu}{\theta} \right)e^{-\theta t}, & 0 \le t \le t_0,  \\ 0 , & t > t_0. \end{cases}
        \end{equation}
        \item[(ii)] If \( w_0 > \mu / \theta \), by \eqref{eq:fluid_model_differential_form}, we have \( \mathrm{d} \bar{W}(0) =  \mu - \theta w_0  > 0 \). The system evolves according to \eqref{eq:fluid_limit_case_1} and there are no fixed points. 
        \item[(iii)] If \( w_0 =  \mu / \theta > 0 \), then \( \mathrm{d} \bar{L}(0) = 0 \). From \eqref{eq:fluid_model_differential_form} we have \( \mathrm{d} \bar{W}(0) = 0  \), and hence   \( \bar{W}(t) =  \mu / \theta \) for all \( t \ge 0\). But based on our analysis earlier, the fixed point \( \mu/\theta \) is not stable. 
    \end{enumerate}
    
    \item[(b)] Suppose \( \theta \ge  0 \). Again by \eqref{eq:fluid_model_differential_form}, we have \( \mathrm{d} \bar{W}(t) < 0 \) for all \( t \ge 0 \). Similar to  the underloaded system under case (a)(i), the activation of the Skorokhod regulator \( \bar{L} \) leads to a stable fixed point at 0. 
        \begin{enumerate}
            \item[(i)] When \( \theta > 0 \), the system evolves according to the solution given by \eqref{eq:fluid_limit_case_2a}. 
            \item[(ii)] When \( \theta = 0 \), the trajectory is given by 
            \begin{equation}
                \bar{W}(t) = 
                \begin{cases} w_{0} + \mu t, & 0 \le t \le - w_{0} / \mu, \\ 0, & t \ge - w_{0} / \mu.  \end{cases}
            \end{equation}
        \end{enumerate}
\end{enumerate}

\begin{figure}[H]
    \centering 
    \begin{subfigure}{0.3\textwidth}
        \centering
        \begin{tikzpicture}[x=0.75pt,y=0.75pt,yscale=-1,xscale=1]
\useasboundingbox (-5,-100) rectangle (105,20);
\draw [color={rgb, 255:red, 155; green, 155; blue, 155 }  ,draw opacity=1 ] (-10.8, -32.6) -- (98.7, -32.6)(0, -93.6) -- (0, 10.4) (91.7, -37.6) -- (98.7, -32.6) -- (91.7, -27.6) (-5, -86.6) -- (0, -93.6) -- (5, -86.6)  ;
\draw [color={rgb, 255:red, 155; green, 155; blue, 155 }  ,draw opacity=1 ]   (1, -3.85) -- (99.25, -3.85) ;
\draw  [dash pattern={on 0.84pt off 2.51pt}]  (-0.07, -54.02) .. controls (4.47, -31.47) and (35.75, -9.47) .. (99.02, -6.93) ;
\draw    (-0.07, -54.02) .. controls (1.42, -46.6) and (5.81, -39.25) .. (13.27, -32.62) .. controls (39.02, -32.56) and (55.75, -32.56) .. (98.29, -32.75) ;

\draw (-26, -10.4) node [anchor=north west][inner sep=0.75pt]  [font=\scriptsize]  {$\mu /\theta $};
\draw (-18.44, -62.47) node [anchor=north west][inner sep=0.75pt]  [font=\scriptsize]  {$w_{0}$};
\draw (-10.2, -32.2) node [anchor=north west][inner sep=0.75pt]  [font=\scriptsize]  {$0$};

\end{tikzpicture}
        \caption{ \( \theta > 0 \)}
    \end{subfigure}
    \begin{subfigure}{0.3\textwidth}
        \centering
        \begin{tikzpicture}[x=0.75pt,y=0.75pt,yscale=-1,xscale=1]
\useasboundingbox (-5,-100) rectangle (105,20);
\draw [color={rgb, 255:red, 155; green, 155; blue, 155 }  ,draw opacity=1 ] (-10.8, -32.6) -- (98.7, -32.6)(0, -93.6) -- (0, 10.4) (91.7, -37.6) -- (98.7, -32.6) -- (91.7, -27.6) (-5, -86.6) -- (0, -93.6) -- (5, -86.6)  ;
\draw  [dash pattern={on 0.84pt off 2.51pt}]  (-0.07, -54.02) -- (87.95, 7.43) ;
\draw    (-0.07, -54.02) -- (30.51, -32.67) -- (98.45, -32.57) ;

\draw (-18.44, -62.47) node [anchor=north west][inner sep=0.75pt]  [font=\scriptsize]  {$w_{0}$};
\draw (-10.2, -32.2) node [anchor=north west][inner sep=0.75pt]  [font=\scriptsize]  {$0$};

\end{tikzpicture}
        \caption{ \( \theta = 0 \)}
    \end{subfigure}
    \begin{subfigure}{0.3\textwidth}
        \centering
        \begin{tikzpicture}[x=0.75pt,y=0.75pt,yscale=-1,xscale=1]
\useasboundingbox (-5,-100) rectangle (105,20);
\draw [color={rgb, 255:red, 155; green, 155; blue, 155 }  ,draw opacity=1 ] (-10.29, 0) -- (99.21, 0)(0, -90.83) -- (0, 13.17) (92.21, -5) -- (99.21, 0) -- (92.21, 5) (-5, -83.83) -- (0, -90.83) -- (5, -83.83)  ;
\draw    (0.65, -34.79) -- (98.9, -34.79) ;
\draw  [dash pattern={on 0.84pt off 2.51pt}]  (0.44, -22.96) .. controls (32.85, -21.14) and (48.85, -7.43) .. (76.85, 29.14) ;
\draw    (0.53, -48.06) .. controls (35.23, -51.38) and (57.23, -61.95) .. (76.66, -87.67) ;
\draw    (0.44, -22.96) .. controls (21.94, -21.76) and (36.76, -14.8) .. (52.11, 0.07) .. controls (67.56, 0.14) and (77.11, -0.08) .. (97.11, 0.07) ;

\draw (-56.3, -41.11) node [anchor=north west][inner sep=0.75pt]  [font=\scriptsize]  {$w_{0} =\mu /\theta $};
\draw (-16.12, -56.99) node [anchor=north west][inner sep=0.75pt]  [font=\scriptsize]  {$w_{0}$};
\draw (-11.12, 1.71) node [anchor=north west][inner sep=0.75pt]  [font=\scriptsize]  {$0$};
\draw (-16.45, -28.99) node [anchor=north west][inner sep=0.75pt]  [font=\scriptsize]  {$w_{0}$};

\end{tikzpicture}
        \caption{ \( \theta < 0 \)}
    \end{subfigure}
    \caption{Fluid limits for  underloaded systems ($ \mu < 0 $). }
\end{figure}
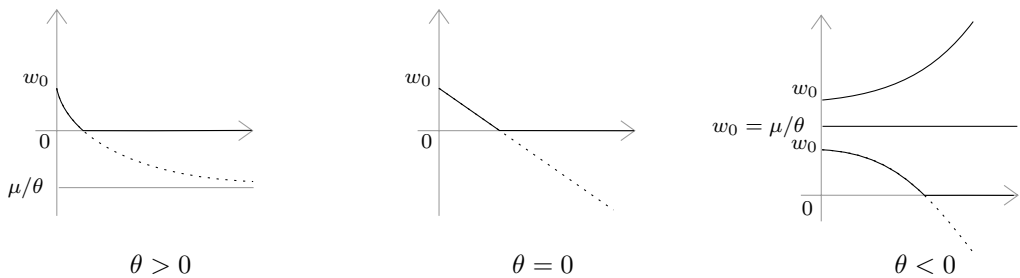

For ease of reference, we summarize the above discussions on the stable fixed points in Table \ref{table:fluid_stability}.  

\begin{table}[ht]
    \begin{tabular}{|c|c|c|c|}  
        \hline 
        & \( \mu > 0 \) & \( \mu = 0 \) & \( \mu < 0 \) \\
        \hline
        \( \theta > 0 \) & \( \mu/\theta \) & 0 & 0 \\
        \hline
        \( \theta = 0 \) & unstable & 0 & 0 \\
        \hline
        \( \theta < 0 \) & unstable & unstable & \( 0 \) \\
        \hline
    \end{tabular}
    \caption{Stable fixed points of the fluid limit \( \bar{W} \) under various parameter regimes.}
    \label{table:fluid_stability}
\end{table}

\FloatBarrier

\section{Sample-path MDP for a Linearly Recursive Markov System} \label{sec:MDP_LRMS}

In this section, we establish a sample-path MDP for the recursive system  
\begin{equation} \label{eq:linear_recursion}
    V^{n}_{i+1} = C^{n}_i V^{n}_i + X^{n}_i , \quad i \in \mathbb{N}_0, 
\end{equation}
where $V^n_0$ is a random variable. Under Assumption \ref{assump:model}, this recursion is a special case of the recursive Markov systems considered by \cite{DupJoh2015}; see also Chapter 5 of \cite{DupBud2019}. Those works establish MDPs for a broader class of recursions in the space \( \mathcal{C}_T \) using a weak-convergence approach based on variational representations. However, this method does not appear to adapt easily to the reflected recursion \eqref{eq:lindley_recursion}. 
We therefore develop a direct proof of the MDP in \( \mathcal{D}_T \), tailored to the linear recursion \eqref{eq:linear_recursion}. This proof produces exponential-equivalence and exponential-tightness arguments that are later used in Section \ref{sec:MDP_W} to establish MDPs for the reflected linear recursion \eqref{eq:recursion_W}. We will also use the MDP result for \eqref{eq:linear_recursion} to provide a workload bound in Section \ref{subsec:workload_bound_aux_system}.  Moreover, because of the specific form of the recursion in \eqref{eq:linear_recursion}, the corresponding rate functions can be identified explicitly.

\subsection{Fluid analysis} \label{subsec:fluid_analysis_V}

Similar to \eqref{eq:process_W}, we first re-index \( i \) by \( \left\lfloor nt \right\rfloor \), \( t\in [0,T] \) and analyze the fluid limit of the process 
\begin{equation} \label{eq:V_bar_rep}
    \bar{V}^{n}(t) = \frac{1}{n} V^{n}_{\left\lfloor nt \right\rfloor}, \quad t \in [0,T]. 
\end{equation}
\begin{theorem}\label{thm:fluid_limit_V}
    Let Assumption \ref{assump:model} hold and $\bar{V}^n_0 \to v_0$ in $L^2$  for some $v_0 \in \mathbb{R}$ as $n \to \infty$,  then 
    \[
        \|\bar{V}^{n} - \bar{V} \|_{T} \to 0 \quad \text{in probability},
    \]
    where 
    \begin{equation} \label{eq:fluid_limit_V}
        \bar{V}(t) =  \frac{\mu}{\theta} + \left( v_0 - \frac{\mu}{\theta} \right)e^{-\theta t}, \quad  t \in [0,T].
    \end{equation}
\end{theorem}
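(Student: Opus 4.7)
The proof should mirror that of Theorem 3.2, stripping away the reflection machinery since the recursion \eqref{eq:linear_recursion} omits the positive-part operator. The plan is in three steps: (i) derive an integral-equation representation of $\bar V^n$, (ii) prove the analogue of Lemma \ref{lem:fluid_error} showing the discretization errors vanish, and (iii) invoke the continuous mapping theorem on the resulting linear Volterra equation, whose explicit solution is \eqref{eq:fluid_limit_V}.

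For step (i), telescoping the identity $V^n_{i+1} - V^n_i = -(\Theta_i/n) V^n_i + X^n_i$ and dividing by $n$ yields
$$\bar V^n(t) = \bar V^n_0 + \bar R^n_X(t) - \theta \int_0^t \bar V^n(s)\, \mathrm{d}s + \bar\epsilon^{1,n}_V(t) + \bar\epsilon^{2,n}_V(t), \quad t \in [0,T],$$
where $\bar R^n_X(t) = n^{-1} \sum_{i=0}^{\lfloor nt\rfloor-1} X^n_i$, the Riemann-sum error is $\bar\epsilon^{1,n}_V(t) = \theta\bigl(\int_0^t \bar V^n(s)\,\mathrm{d}s - n^{-1}\sum_{i=0}^{\lfloor nt \rfloor -1} \bar V^n_i\bigr)$, and $\bar\epsilon^{2,n}_V(t) = n^{-1}\sum_{i=0}^{\lfloor nt\rfloor-1}(\theta-\Theta_i)\bar V^n_i$ is a martingale relative to $\{\mathcal{F}^n_{i-1}\}$. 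This is exactly \eqref{eq:W_fluid_representation} with the reflection term $n^{-1} L^n$ removed.

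For step (ii), the proof of Lemma \ref{lem:fluid_error} carries over once a uniform $L^2$ bound on $\bar V^n$ is in hand. Using the independence of $(\Theta_i, X^n_i)$ from $\mathcal{F}^n_{i-1}$ together with Assumption \ref{assump:model}, one obtains
$$\mathbb{E}\bigl[(\bar V^n_{i+1})^2\bigr] = \Bigl(1 - \tfrac{2\theta}{n} + O(n^{-2})\Bigr) \mathbb{E}\bigl[(\bar V^n_i)^2\bigr] + O(n^{-1}) \mathbb{E}\bigl[|\bar V^n_i|\bigr] + O(n^{-2}),$$
and a discrete Gronwall iteration over $i \le \lfloor nT\rfloor$, together with the $L^2$ convergence of $\bar V^n_0$, yields $\sup_{n} \sup_{t \le T} \mathbb{E}[(\bar V^n(t))^2] < \infty$. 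Given this bound, Doob's inequality controls $\bar\epsilon^{2,n}_V$ and a routine estimate gives $\|\bar\epsilon^{1,n}_V\|_T \to 0$ in probability.

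For step (iii), Lemma \ref{lem:FWLLN} gives $\bar R^n_X \to \mu\mathfrak{e}$ u.o.c. in probability, and by assumption $\bar V^n_0 \to v_0$. The solution map $\Phi$ of the linear Volterra equation $\phi(t) = y(t) - \theta \int_0^t \phi(s)\,\mathrm{d}s$ is Lipschitz from $\mathcal{D}_T$ to $\mathcal{D}_T$ with constant $e^{|\theta|T}$ by Gronwall, so the continuous mapping theorem yields $\bar V^n \to \Phi(v_0 + \mu\mathfrak{e})$ u.o.c. in probability. Direct integration of $\dot{\bar V} = \mu - \theta \bar V$ with $\bar V(0) = v_0$ then recovers the closed form \eqref{eq:fluid_limit_V}. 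The main obstacle in this program is the uniform $L^2$ bound in step (ii): since $\bar V^n$ has no sign constraint, one cannot exploit the reflection argument available for $\bar W^n$ and must instead rely on the specific scaling $C^n_i = 1 - \Theta_i/n$ and the Gronwall-type recursion described above.
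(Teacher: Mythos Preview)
Your proposal is correct and follows essentially the same approach as the paper: the same integral-equation representation, the same reduction to showing $\bar\epsilon^{1,n}_V,\bar\epsilon^{2,n}_V\to 0$ via a uniform second-moment bound, and the same continuous-mapping conclusion through the map the paper calls $\mathcal{M}_\theta$. The one place you add value is in flagging that the $L^2$ bound for the signed process $\bar V^n$ cannot use the domination $\bar W^n\le \bar Y^n$ from Lemma~\ref{lem:estimate_Y} verbatim; your direct Gronwall recursion on $\mathbb{E}[(\bar V^n_i)^2]$ is a clean fix, though an equally easy alternative is to note $|\bar V^n_{i+1}|\le |C^n_i|\,|\bar V^n_i|+n^{-1}|X^n_i|$ and rerun the Lemma~\ref{lem:estimate_Y} argument with absolute values in place of positive parts.
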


\begin{proof}

    Similar to \eqref{eq:W_fluid_representation}, we can write 
    \begin{equation*} 
        \bar{V}^{n}(t)  
        = \bar{V}^{n}_0 + \frac{1}{n} \sum_{i=0}^{\lfloor nt \rfloor -1} X_i^{n} - \int_{0}^{t} \theta \bar{V}^{n}(s) ds  + \bar{\epsilon}_{V}^{1,n}(t) + \bar{\epsilon}_{V}^{2,n}(t), 
    \end{equation*}
    where 
    \[
    \begin{aligned}
        \bar{\epsilon}_{V}^{1,n}(t) &= \theta \left(  \int_{0}^{t} \bar{V}^{n}(s) ds - \frac{1}{n} \sum_{i=0}^{\lfloor nt \rfloor -1} \bar{V}^{n}_i \right), \\
        \bar{\epsilon}_{V}^{2,n}(t) &= \frac{1}{n} \sum_{i=0}^{\lfloor nt \rfloor -1} (\theta - \Theta_i)  \bar{V}^{n}_i.
    \end{aligned} 
    \]
    By the same arguments used to show Lemma \ref{lem:fluid_error}, we can show  \( \| \bar{\epsilon}_{V}^{1,n}\|_{T} \) and \( \|\bar{\epsilon}^{2,n}_{V}\|_{T} \) converge to \( 0 \) in probability. Then we apply the continuous mapping theorem, as in the proof of Theorem \ref{thm:fluid_limit}, to obtain that the fluid limit \( \bar{V} = \mathcal{M}_\theta(v_0 + \mu \mathfrak{e}) \). By writing this in differential form, the explicit formula in \eqref{eq:fluid_limit_V} is derived by solving the differential equation 
    \[
        \begin{cases} d\bar{V}(t) = \mu - \theta \bar{V}(t),  \\ \bar{V}(0) = v_0.   \end{cases}
    \]
    This concludes the proof.
\end{proof}

Similar to Section \ref{sec:fluid_stability}, we need to analyze the stability of fixed points in the fluid equation \eqref{eq:fluid_limit_V}. It is simpler here, as we do not have the reflection term that appeared in \eqref{eq:fluid_model_differential_form}. Therefore, we simply summarize the results in Table \ref{table:fluid_stability_V}. 
\begin{table}[H]
    \begin{tabular}{|c|c|c|c|}  
        \hline 
        & \( \mu > 0 \) & \( \mu = 0 \) & \( \mu < 0 \) \\
        \hline
        \( \theta > 0 \) & \( \mu/\theta \) & 0 & \( \mu/\theta \) \\
        \hline
        \( \theta = 0 \) & unstable & 0 & unstable \\
        \hline
        \( \theta < 0 \) & unstable & unstable & unstable \\
        \hline
    \end{tabular}
    \caption{Stable fixed points of the fluid limit \( \bar{V} \) under various parameter regimes.}
    \label{table:fluid_stability_V}
\end{table}

\subsection{MDP results} \label{subsec:V_MDP_results}

Now, we are ready to study sample-path moderate deviations for this model. Let the scaling sequence \( \{b_n, n \in \mathbb{N}\} \) be defined by \eqref{eq:MDP_scaling_seq}. We also restrict ourselves to the cases where the fluid limit is stable and study moderate deviations for processes of the form 
\begin{equation} \label{eq:md_V}
    \widetilde{V}^{n}(t) = \frac{\sqrt{n}}{b_n} \left( \bar{V}^{n}(t) - \bar{V}^* \right) , \quad t \in [0,T],  
\end{equation}
where \( \bar{V}^* \) are the stable fixed points identified in Table \ref{table:fluid_stability_V}. First, we need an assumption similar to Assumption \ref{assump:MDP_W} (i). However, we do not require non-negativity of \(V^n_0 \). 

\begin{assumption} 
    \label{assump:V} 
    Let the random variables \( \widetilde{V}^n_0 \stackrel{P^{1/b_n^2}}{\longrightarrow} v_0 \) for some \(v_0 \in \mathbb{R}\).  
\end{assumption} 

Here is the main MDP result. Recall that \( I_X \) and \( I_\Theta \) are the rate functions in Theorem \ref{thm:MDP_RW}. 
\begin{theorem}
    \label{thm:MDP_V}
    Under Assumptions \ref{assump:model}, \ref{assump:MDP_W} (ii), (iii) and \ref{assump:V}, the family  \( \{ \widetilde{V}^{n},\; n \in \mathbb{N} \} \) satisfies an MDP with rate \( b_n^{2} \) and rate function \(I\), where 
    \begin{enumerate}
        \item[(i)] if \( \theta > 0 \), \( \mu \neq 0 \) and \( \bar{V}^{*} = \mu / \theta \), then
        \[
            I(\phi) = \inf_{\substack{\psi_1, \psi_2 \in \mathcal{D}_T,\\ \phi  = \mathcal{M}_\theta ( v_0 + \psi_1 - \frac{\mu}{\theta} \psi_2 + r \mathfrak{e}      ) .}}   I_X(\psi_1)+I_\Theta(\psi_2) ;
        \]
        \item[(ii)] if \( \mu = 0 \), \( \theta \ge 0 \) and \( \bar{V}^* = 0 \), then  
        \[
            I(\phi) = \inf_{\substack{\psi_1 \in \mathcal{D}_T,  \\  \phi = \mathcal{M}_\theta ( v_0 + \psi_1 +   r \mathfrak{e}  ).  }}  I_X(\psi_1) .
        \]
    \end{enumerate} 
\end{theorem}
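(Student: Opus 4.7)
The plan is to represent $\widetilde{V}^n$ as the image under a continuous linear operator of two MD-scaled independent random walks plus a drift term, modulo error terms that are exponentially equivalent to zero, and then invoke the contraction principle using Theorem \ref{thm:MDP_RW}. This mirrors the fluid argument in Theorem \ref{thm:fluid_limit_V}, but now at the moderate-deviation scale and with exponential (rather than probabilistic) control of the errors.

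\textbf{Step 1 (Representation).} Starting from the recursion $V^n_{i+1} - V^n_i = X^n_i - \tfrac{1}{n}\Theta_i V^n_i$, I telescope and re-index by $\lfloor n\cdot\rfloor$ as in the proof of Theorem \ref{thm:fluid_limit_V} to obtain
\begin{equation*}
\bar V^n(t) = \bar V^n_0 + \tfrac{1}{n}\sum_{i=0}^{\lfloor nt\rfloor-1} X^n_i - \theta\int_0^t \bar V^n(s)\,\mathrm{d}s + \bar\epsilon^{1,n}_V(t) + \bar\epsilon^{2,n}_V(t).
\end{equation*}
Subtracting $\bar V^*$ (using $\theta\bar V^*=\mu$ in case (i), and $\bar V^*=0$ in case (ii)), splitting $\Theta_i = \theta + (\Theta_i-\theta)$ inside $\bar\epsilon^{2,n}_V$, and multiplying through by $\sqrt{n}/b_n$ yields a representation of the form
\begin{equation*}
\widetilde V^n(t) = \widetilde V^n_0 + \widetilde R^n_X(t) - \tfrac{\mu}{\theta}\widetilde R^n_\Theta(t) + r^n(t) - \theta\int_0^t \widetilde V^n(s)\,\mathrm{d}s + \widetilde\epsilon^n(t),
\end{equation*}
with $r^n(t)=\tfrac{\sqrt n}{b_n}(\mu_n\lfloor nt\rfloor/n - \mu t) \to rt$ uniformly by Assumption \ref{assump:MDP_W}(iii), and $\widetilde\epsilon^n$ collecting $\tfrac{\sqrt n}{b_n}\bar\epsilon^{1,n}_V$, the floor-rounding term, and the scaled residual $\tfrac{\sqrt n}{b_n}\cdot\tfrac{1}{n}\sum(\theta-\Theta_i)(\bar V^n_i-\bar V^*)$. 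Inverting the linear integral operator $I-\theta\int_0^\cdot$ via the continuous map $\mathcal{M}_\theta$ of Appendix \ref{subsec:LGRM} gives
\begin{equation*}
\widetilde V^n \;=\; \mathcal{M}_\theta\bigl(\widetilde V^n_0 + \widetilde R^n_X - \tfrac{\mu}{\theta}\widetilde R^n_\Theta + r^n + \widetilde\epsilon^n\bigr).
\end{equation*}

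\textbf{Step 2 (Super-exponential vanishing of errors).} This is the heart of the argument. I will first prove an exponential stochastic boundedness of $\bar V^n$ in sup-norm (the analogue of Lemma \ref{lem:exp_bounded_Vbar}) by iterating the recursion and exploiting the exponential moments in Assumption \ref{assump:MDP_W}(ii) together with Chernoff-type bounds. Next, the term $\tfrac{\sqrt n}{b_n}\bar\epsilon^{1,n}_V$ is controlled by a uniform Riemann-sum argument: the left-endpoint sum differs from the integral by at most the oscillation of $\bar V^n$ over intervals of length $1/n$, which is exponentially small in $\mathcal{D}_T$. The more delicate residual $\tfrac{\sqrt n}{b_n}\cdot\tfrac{1}{n}\sum_{i<\lfloor n\cdot\rfloor}(\theta-\Theta_i)(\bar V^n_i - \bar V^*)$ is a martingale with respect to $\{\mathcal{F}^n_i\}$; on the high-probability event $\{\|\bar V^n\|_T \le M\}$ it has bounded increments of order $M b_n/(n^{3/2})$, so an exponential martingale (Azuma/Freedman-type) inequality combined with a union bound over the $M$-truncation gives super-exponential vanishing in $\mathcal{D}_T$. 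Together these yield $\widetilde\epsilon^n \stackrel{P^{1/b_n^2}}{\longrightarrow}0$.

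\textbf{Step 3 (Driving MDP and contraction).} By independence of $\{X^n_i\}$ and $\{\Theta_i\}$ (Assumption \ref{assump:model}(iii)) and Theorem \ref{thm:MDP_RW}, the pair $(\widetilde R^n_X,\widetilde R^n_\Theta)$ satisfies a joint MDP in $\mathcal{D}_T\times\mathcal{D}_T$ with rate $b_n^2$ and rate function $I_X(\psi_1)+I_\Theta(\psi_2)$; together with Assumption \ref{assump:V}, so does the triple $(\widetilde V^n_0,\widetilde R^n_X,\widetilde R^n_\Theta)$. Since $\mathcal{M}_\theta$ is continuous on $\mathcal{D}_T$ and $r^n\to r\mathfrak{e}$ deterministically, the map $\Phi:(w,\psi_1,\psi_2)\mapsto \mathcal{M}_\theta(w+\psi_1-\tfrac{\mu}{\theta}\psi_2+r\mathfrak{e})$ is continuous. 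The contraction principle gives an MDP for $\Phi(\widetilde V^n_0,\widetilde R^n_X,\widetilde R^n_\Theta)$ with the rate function asserted in the theorem. In case (ii), $\mu=0$ kills the $\widetilde R^n_\Theta$ contribution, leaving only the $I_X$-infimum. Finally, exponential equivalence between $\widetilde V^n$ and $\Phi(\widetilde V^n_0,\widetilde R^n_X,\widetilde R^n_\Theta)$ (Step 2 plus continuity of $\mathcal{M}_\theta$ on the exponentially tight set where $\widetilde\epsilon^n$ is small) transfers the MDP to $\widetilde V^n$.

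The main obstacle is Step 2, and within it the martingale residual involving $(\Theta_i-\theta)(\bar V^n_i-\bar V^*)$: the summand depends on the very process we are trying to control, so the standard MDP for random walks does not apply. The resolution is to decouple via exponential stochastic boundedness of $\bar V^n$ (so that on a super-exponentially likely event the martingale has controlled increments), then apply an exponential concentration inequality in $\mathcal{D}_T$ using the $\mathcal{C}$-exponential tightness criterion from Appendix \ref{subsec:exp_tightness}.
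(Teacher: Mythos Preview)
Your overall architecture (representation via $\mathcal{M}_\theta$, joint MDP for the driving walks, contraction principle) matches the paper. The gap is in Step~2, where you claim that exponential stochastic boundedness of $\bar V^n$ together with a martingale concentration inequality yields $\widetilde\epsilon^{2,n}_V \stackrel{P^{1/b_n^2}}{\longrightarrow}0$ directly. It does not. On the event $\{\|\bar V^n-\bar V^*\|_T\le M\}$, the exponential martingale/supermartingale bound gives, after optimizing the free parameter,
\[
\limsup_{n\to\infty}\frac{1}{b_n^2}\log\mathbb{P}\bigl(\|\widetilde\epsilon^{2,n}_V\|_T>\epsilon,\ \|\bar V^n-\bar V^*\|_T\le M\bigr)\ \le\ -\frac{c\,\epsilon^2}{M^2\sigma_\Theta^2 T},
\]
which is finite. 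Exponential boundedness only lets you choose, for each $\alpha$, an $M_\alpha$ with complement probability $\lesssim e^{-\alpha b_n^2}$; as $\alpha\to\infty$ one needs $M_\alpha\to\infty$, so the martingale bound above degrades to $0$, not $-\infty$. In short, this argument delivers \emph{exponential tightness} of $\widetilde\epsilon^{2,n}_V$, not super-exponential vanishing. (Your stated increment order $Mb_n/n^{3/2}$ is also off: the increments of $\widetilde\epsilon^{2,n}_V$ are $(b_n\sqrt n)^{-1}(\theta-\Theta_i)(\bar V^n_i-\bar V^*)$, and $\Theta_i$ is unbounded, so Azuma does not apply as stated.)

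The paper closes this gap with a bootstrap that you have omitted: (a) from exponential boundedness of $\bar V^n$ one first proves only \emph{exponential tightness} of $\widetilde\epsilon^{2,n}_V$ (Lemma~\ref{lem:exp_tight_esp_V_2}); (b) this, together with the other terms, yields exponential tightness of $\widetilde V^n$ itself (Theorem~\ref{thm:exp_tightness_V}); (c) since $\widetilde V^n=\tfrac{\sqrt n}{b_n}(\bar V^n-\bar V^*)$ is exponentially tight and $\sqrt n/b_n\to\infty$, Lemma~\ref{lem:exptight_fluid0} forces $\bar V^n\stackrel{P^{1/b_n^2}}{\longrightarrow}\bar V^*$ (Corollary~\ref{cor:V_bar}); (d) only now can one rerun the martingale argument with $\eta$ arbitrarily small in place of $M$, take $\eta\to0$ first and then the free parameter $\rho\to\infty$, to obtain $\widetilde\epsilon^{2,n}_V\stackrel{P^{1/b_n^2}}{\longrightarrow}0$ (Lemma~\ref{lem:eps_V_expeq_0}). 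Your Step~2 needs exactly this intermediate passage through exponential tightness of $\widetilde V^n$ and the resulting super-exponential fluid convergence; without it the claimed super-exponential vanishing of the martingale residual is unproved.
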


Due to the simplicity of \( I_X \) and \( I_\Theta \), we can explicitly solve the optimization problems for the rate functions in Theorem \ref{thm:MDP_V}. 

\begin{theorem}
    \label{thm:MDP_V_explicit_rate_fn}
    Under Assumptions \ref{assump:model}, \ref{assump:MDP_W} (ii), (iii) and \ref{assump:V}, the rate functions in Theorem \ref{thm:MDP_V} take the form: 
    \begin{enumerate}
        \item[(i)] suppose \( \theta > 0 \),  \( \mu \neq 0 \) and \( \bar{V}^{*} = \mu / \theta \),  then 
        \[
            I(\phi) = \frac{\theta^{2}}{2(\theta^{2}\sigma^{2}_{X}+ \mu^{2} \sigma^{2}_{\Theta})}  \int_{0}^{T}   (\dot{\phi}(t) - r + \theta \phi(t))^{2} \mathrm{d}t, 
        \]
        for $\phi \in \mathcal{AC}$ with \(\phi(0) = v_0\). Otherwise, $I(\phi) = \infty$. 
        \item[(ii)] suppose \( \mu = 0 \), \( \theta \ge 0 \) and \( \bar{V}^{*} = 0 \),  then 
        \[
            I(\phi) = \frac{1}{2\sigma^{2}_{X}} \int_{0}^{T}  (\dot{\phi}(t) - r + \theta \phi(t))^{2} \mathrm{d}t, 
        \]
        for $\phi \in \mathcal{AC}$ with \(\phi(0) = v_0\). Otherwise, $I(\phi) = \infty$. 
    \end{enumerate}
    
\end{theorem}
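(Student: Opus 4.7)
The plan is to exploit the ODE characterization of $\mathcal{M}_\theta$ to collapse each variational problem into a pointwise quadratic minimization. From the explicit formula \eqref{eq:fluid_limit_V} one verifies that for any $y \in \mathcal{D}_T$, the output $\phi = \mathcal{M}_\theta(y)$ satisfies the integral equation $\phi(t) = y(t) - \theta\int_0^t \phi(s)\,ds$, so whenever $y$ is absolutely continuous, $\phi \in \mathcal{AC}$, $\phi(0) = y(0)$, and $\dot\phi(t) = \dot y(t) - \theta\phi(t)$ for a.e.\ $t$. Since $I_X(\psi_1) < \infty$ (resp.\ $I_\Theta(\psi_2) < \infty$) forces $\psi_1, \psi_2 \in \mathcal{AC}_0$, the infimum is attained over a set that is nonempty precisely when $\phi \in \mathcal{AC}$ with $\phi(0) = v_0$; otherwise $I(\phi) = \infty$, matching the theorem.

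For case (i), differentiating the constraint $\phi = \mathcal{M}_\theta\bigl(v_0 + \psi_1 - \tfrac{\mu}{\theta}\psi_2 + r\mathfrak{e}\bigr)$ yields the a.e.\ pointwise relation
\begin{equation*}
    \dot\psi_1(t) - \frac{\mu}{\theta}\dot\psi_2(t) = \dot\phi(t) - r + \theta\phi(t) =: g(t).
\end{equation*}
Because $I_X(\psi_1) + I_\Theta(\psi_2)$ is a sum of integrals of strictly convex integrands in $(\dot\psi_1(t), \dot\psi_2(t))$ subject to an affine pointwise constraint, I would interchange infimum and integral (standard, via measurable selection or direct pointwise optimization) and solve, for each $t$, the convex program
\begin{equation*}
    \min_{a - (\mu/\theta)b = g(t)} \frac{a^2}{2\sigma_X^2} + \frac{b^2}{2\sigma_\Theta^2}.
\end{equation*}
A Lagrange multiplier computation gives optimal $a^* = \frac{\theta^2\sigma_X^2}{\theta^2\sigma_X^2 + \mu^2\sigma_\Theta^2}g(t)$ and $b^* = -\frac{\mu\theta\,\sigma_\Theta^2}{\theta^2\sigma_X^2 + \mu^2\sigma_\Theta^2}g(t)$, yielding pointwise minimum $\frac{\theta^2 g(t)^2}{2(\theta^2\sigma_X^2 + \mu^2\sigma_\Theta^2)}$. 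Integrating in $t$ gives the formula in (i). Feasibility (i.e.\ existence of $\psi_1, \psi_2 \in \mathcal{AC}_0$ realizing $a^*, b^*$) is immediate by setting $\psi_i(t) = \int_0^t$ of the respective optimum.

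Case (ii) is a direct specialization: the constraint $\phi = \mathcal{M}_\theta(v_0 + \psi_1 + r\mathfrak{e})$ forces $\dot\psi_1(t) = \dot\phi(t) - r + \theta\phi(t)$ a.e., so $\psi_1$ is uniquely determined (and lies in $\mathcal{AC}_0$ by the boundary condition $\phi(0) = v_0$), and $I(\phi) = I_X(\psi_1)$ gives the claimed expression. The only genuine point of care is the rigorous justification of the Fubini-style interchange of infimum and integral in case (i); this is handled by noting that the pointwise minimizers are measurable functions of $g$ and that, being attained by explicit $\psi_1^*, \psi_2^* \in \mathcal{AC}_0$, they provide both an upper and lower bound on $I(\phi)$. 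No other obstacles arise, since the linearity of $\mathcal{M}_\theta$ eliminates the need for the more delicate analysis required for the reflected counterpart in Theorem \ref{thm:MDP_W_explicit_rate_fn}.
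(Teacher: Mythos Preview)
Your proposal is correct and follows essentially the same approach as the paper: restrict to $\psi_1,\psi_2 \in \mathcal{AC}_0$ (else the rate functions are infinite), differentiate the integral constraint defining $\mathcal{M}_\theta$ to obtain the pointwise affine relation $\dot\psi_1(t) - \tfrac{\mu}{\theta}\dot\psi_2(t) = \dot\phi(t) - r + \theta\phi(t)$, and solve the resulting pointwise quadratic minimization via Lagrange multipliers to get the same optimizers. Your treatment is in fact slightly more explicit than the paper's about the interchange of infimum and integral and about feasibility of the optimizers; the only quibble is that the integral-equation characterization of $\mathcal{M}_\theta$ is its definition (Appendix~\ref{subsec:LGRM}(b)) rather than something to be verified from \eqref{eq:fluid_limit_V}.
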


\subsection{Exponential Tightness}

In this section, we prove Theorem \ref{thm:exp_tightness_V}, which establishes exponential tightness for the family \( \{\widetilde{V}^n\}_{n} \). 
With some algebra, we can write \eqref{eq:md_V} as
\begin{equation}\label{eq:V_md_representation}
    \begin{aligned} 
    \widetilde{V}^{n}(t) 
    &=  \widetilde{V}^{n}(0) +  \widetilde{R}^n_X(t) -   \bar{V}^{*} \widetilde{R}^n_\Theta(t) - \int_{0}^{t} \theta \widetilde{V}^{n} (s) \mathrm{d}s   \\
        & \quad \quad + \frac{\sqrt{n}}{b_n}(\mu_n - \mu)t  + \frac{\sqrt{n}}{b_n}  \left( \mu - \theta \bar{V}^{*} \right) t + \widetilde{\epsilon}^{1,n}_{V}(t) + \widetilde{\epsilon}^{2,n}_{V}(t) + \widetilde{\epsilon}^{3,n}_{V}(t),
\end{aligned}
\end{equation}
where $\widetilde{R}^n_X$ and $\widetilde{R}^n_\Theta$ are the random walks defined in \eqref{eq:RW_MD_scaling} and the error terms are given by  
\[
\begin{aligned}
    \widetilde{\epsilon}^{1,n}_{V}(t) &= \theta \left(  \int_{0}^{t} \widetilde{V}^{n}(s) \mathrm{d} s  - \frac{1}{n} \sum_{i=0}^{\lfloor nt \rfloor -1} \widetilde{V}^{n}_i \right), \\
    \widetilde{\epsilon}^{2,n}_{V}(t) &= \frac{1}{b_n\sqrt{n}} \sum_{i=0}^{\left\lfloor nt \right\rfloor-1} \left( \theta - \Theta_{i} \right) \left( \bar{V}^{n}_{i} - \bar{V}^{*} \right), \\
    \widetilde{\epsilon}^{3,n}_{V}(t) &= \frac{\left\lfloor nt \right\rfloor - nt}{b_n\sqrt{n}}    \left( \mu_n - \theta \bar{V}^{*} \right).
\end{aligned} 
\]
Then it suffices to show exponential tightness for each of the terms in \eqref{eq:V_md_representation}.  
The terms that require substantial analysis are \( \widetilde{\epsilon}^{1,n}_V \) and \( \widetilde{\epsilon}^{2,n}_V \). To do so, we shall need the following lemma.

\begin{lemma}
\label{lem:exp_bounded_Vbar}  
  Under Assumptions \ref{assump:model}, \ref{assump:MDP_W} (ii), (iii) and \ref{assump:V}, we have  
  \begin{equation} \label{eq:exp_bounded_Vbar}
     \lim_{K \to \infty} \limsup_{n \to \infty} \frac{1}{b_n^{2}} \log \mathbb{P} \left( \| \bar{V}^{n} \|_T > K \right) = - \infty. 
  \end{equation}
\end{lemma}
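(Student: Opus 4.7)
The plan is to derive a deterministic pathwise bound on $\|\bar{V}^n\|_T$ in terms of empirical averages of $|\Theta_j|$ and $|X^n_k|$, and then apply Chernoff-type estimates to each, exploiting the MDP scale separation $b_n = o(\sqrt{n})$. Iterating \eqref{eq:linear_recursion} gives, for $i \le \lfloor nT \rfloor$,
\[
V^n_i = V^n_0 \prod_{j=0}^{i-1} C^n_j + \sum_{k=0}^{i-1} X^n_k \prod_{j=k+1}^{i-1} C^n_j.
\]
Since $|1 - \Theta_j/n| \le 1 + |\Theta_j|/n \le e^{|\Theta_j|/n}$, any partial product $|\prod_{j=k}^{i-1} C^n_j|$ is bounded by $e^{S^n_\Theta}$ where $S^n_\Theta := n^{-1}\sum_{j=0}^{\lfloor nT\rfloor-1} |\Theta_j|$. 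Writing also $S^n_X := n^{-1}\sum_{k=0}^{\lfloor nT\rfloor-1} |X^n_k|$, this yields the deterministic estimate
\[
\|\bar{V}^n\|_T \le \bigl(|\bar{V}^n_0| + S^n_X\bigr)\,e^{S^n_\Theta}.
\]

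It then suffices to show that each of $|\bar V^n_0|$, $S^n_X$, and $S^n_\Theta$ is super-exponentially stochastically bounded at rate $b_n^2$. Assumption \ref{assump:V} combined with $b_n/\sqrt n \to 0$ yields $\bar V^n_0 - \bar V^* = (b_n/\sqrt{n}) \widetilde V^n_0 \stackrel{P^{1/b_n^2}}{\longrightarrow} 0$, so $\mathbb{P}(|\bar V^n_0| > K_1)$ decays super-exponentially at rate $b_n^2$ for every $K_1 > |\bar V^*|$. For $S^n_\Theta$, the exponential-moment hypothesis in Assumption \ref{assump:MDP_W}(ii), together with the Chernoff bound, gives, for some small $\lambda > 0$,
\[
\mathbb{P}(S^n_\Theta > M) \le \exp\!\bigl(-n\bigl[\lambda M - T \log \mathbb{E}\,e^{\lambda |\Theta_0|}\bigr]\bigr).
\]
Choosing $M$ large enough that the bracket is a positive constant $c_\Theta(M)$, the scale separation $n/b_n^2 \to \infty$ forces $\limsup_n b_n^{-2} \log \mathbb{P}(S^n_\Theta > M) = -\infty$; the identical Chernoff argument, invoking the uniform bound $\sup_n \mathbb{E}[e^{\lambda |X^n_0|}] < \infty$, treats $S^n_X$.

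Combining these estimates via the union bound $\{\|\bar{V}^n\|_T > K\} \subseteq \{|\bar V^n_0| > K_1\} \cup \{S^n_X > K_2\} \cup \{e^{S^n_\Theta} > K_3\}$ with $(K_1 + K_2) K_3 \le K$ produces $\limsup_{n} b_n^{-2} \log \mathbb{P}(\|\bar V^n\|_T > K) = -\infty$ for every sufficiently large $K$, from which \eqref{eq:exp_bounded_Vbar} follows upon letting $K \to \infty$. The main bookkeeping point is that the Chernoff estimate for $S^n_\Theta$ requires an exponential moment of $|\Theta_0|$, i.e.\ $\mathbb{E}[e^{\lambda \Theta_0}] < \infty$ on both sides of zero; this is the standard interpretation of Assumption \ref{assump:MDP_W}(ii) consistent with Theorem \ref{thm:MDP_RW}. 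Beyond this, the argument amounts to the deterministic Grönwall-type bound above plus exponential-moment control at rate $b_n^2 = o(n)$, so no genuine obstacle is expected.
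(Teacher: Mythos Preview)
Your proposal is correct and follows essentially the same route as the paper: both expand the recursion, bound every partial product $\prod |C^n_j|$ by $\exp\bigl(n^{-1}\sum_{j<\lfloor nT\rfloor}|\Theta_j|\bigr)$, and arrive at the same pathwise estimate $\|\bar V^n\|_T \le (|\bar V^n_0| + S^n_X)\,e^{S^n_\Theta}$. The only difference is in how the empirical averages are controlled: the paper invokes the sample-path MDP for the random walks of $|\Theta_i|$ and $|X^n_i|$ together with Lemma~4.2(b) of Puhalskii--Whitt to conclude that $S^n_\Theta$ and $S^n_X$ converge super-exponentially to their means, whereas you apply a direct Chernoff bound and exploit $n/b_n^2\to\infty$; your argument is slightly more elementary and self-contained, while the paper's is shorter given the cited machinery. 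Your observation that a two-sided exponential moment of $\Theta_0$ is implicitly required is also correct and matches what the paper needs for its own invocation of the MDP for $|\Theta_i|$.
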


\begin{proof}
    By expanding the recursion, we obtain 
    \begin{equation*}
        \bar{V}^{n}_{i} = \bar{X}^{n}_{i-1} + C^{n}_{i-1} \bar{X}^{n}_{i-2} + \cdots + C^{n}_{i-1} \cdots C^{n}_{1} \bar{X}^{n}_{0} + C^{n}_{i-1} \cdots C^{n}_{1} C^{n}_{0} \bar{V} ^{n}_{0} . 
    \end{equation*}
    Since \( \log(1+x) \le x \), we have the following bound: 
    \begin{equation*}
        C^{n}_{i} = 1- \frac{1}{n}\Theta_i  \le 1 + \frac{1}{n}|\Theta_i|  = e^{\log(1 + \frac{1}{n}|\Theta_i| )}   \le e^{\frac{1}{n} |\Theta_i|}. 
    \end{equation*}
    Further using the fact that \( \exp(\frac{1}{n} |\Theta_i|) \ge 1 \) a.s., we apply the above bounds to \( \bar{V} ^{n}_{\left\lfloor nt \right\rfloor} \) and get 
    \begin{equation}
        \vert \bar{V} ^{n}_{\left\lfloor nt \right\rfloor}  \vert 
        \le \exp \left\{\frac{1}{n} \sum_{i=0}^{\left\lfloor nt \right\rfloor-1} |\Theta_i| \right\} \vert \bar{V} ^{n }_{0} \vert  + \exp \left\{\frac{1}{n} \sum_{i=0}^{\left\lfloor nt \right\rfloor-1} |\Theta_i| \right\} \left( \frac{1}{n}  \sum_{i=0}^{\left\lfloor nt \right\rfloor-1} \vert X_{i}^{n} \vert  \right),
        \label{eq:exp_tightness_V_bar}
    \end{equation}
    Denote \(  \theta'  := \mathbb{E} \vert \Theta_0 \vert \) and \(\mu_n'  :=\mathbb{E} \vert X^{n}_0 \vert  \). Under Assumption \ref{assump:MDP_W} (ii), the families of random walks 
    \[
        \frac{1}{b_n \sqrt{n}}  \sum_{i=0}^{\left\lfloor nt \right\rfloor-1} \left( |\Theta_i| - \theta'nt \right)
        \quad \textrm{and} \quad 
        \frac{1}{b_n \sqrt{n}} \sum_{i=0}^{\left\lfloor nt \right\rfloor-1} \left( |X^{n}_i| - \mu_n'nt \right)
    \]
    obey an MDP in \(\mathcal{D}_T\) with rate \( b_n^{2} \). Then by Lemma 4.2 (b) in \cite{PuhW97}, we have 
    \begin{equation*}
        \frac{1}{n}\sum_{i=0}^{\left\lfloor n \cdot \right\rfloor-1} |\Theta_i| -  \theta' \mathfrak{e} \stackrel{p^{1/b_n^{2}}}{\longrightarrow} 0 
        \quad \textrm{and} \quad 
        \frac{1}{n}\sum_{i=0}^{\left\lfloor n \cdot \right\rfloor-1} |X_i^{n}| - \mu_n' \mathfrak{e} \stackrel{p^{1/b_n^{2}}}{\longrightarrow} 0. 
    \end{equation*}
    By applying the contraction principle, the right hand side in \eqref{eq:exp_tightness_V_bar} is exponentially equivalent to the deterministic process 
    \[
        e^{\theta' t } \vert \bar{V}^* \vert  +  e^{\theta't } \mu_n' t, \quad t\in [0,T].  
    \]
    Further, \( (\mu'_n)^2 \le \mathbb{E}[(X_n)^{2}] = \sigma^{2}_{X,n} + \mu_n^2 \)
    implies that $\mu'_n$ is a bounded sequence, and hence \eqref{eq:exp_bounded_Vbar} holds. 
\end{proof}

\begin{lemma}
    \label{lem:exp_tight_esp_V_2}
    Under Assumptions \ref{assump:model}, \ref{assump:MDP_W} (ii), (iii) and \ref{assump:V}, 
    the family of processes \( \{\widetilde{\epsilon}_{V} ^{2, n}, n \in \mathbb{N} \} \)   is exponentially tight in \(\mathcal{D}_T\) with rate \( b_n^{2} \). 
\end{lemma}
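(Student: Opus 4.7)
The plan is to exploit the martingale structure of $\widetilde{\epsilon}_V^{2,n}$ together with the exponential stochastic boundedness of $\bar{V}^n$ given by Lemma \ref{lem:exp_bounded_Vbar}. Since $(\bar{V}^n_i - \bar{V}^*)$ is $\mathcal{F}^n_{i-1}$-measurable whereas $(\theta - \Theta_i)$ is independent of $\mathcal{F}^n_{i-1}$ with zero mean, the summands form a martingale difference sequence. I will verify the two ingredients of $\mathcal{C}$-exponential tightness from Appendix \ref{subsec:exp_tightness}: a super-exponential modulus-of-continuity bound, and super-exponential vanishing of the maximal jump.

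For the modulus bound, I would first truncate. For $K > 0$ set $\xi_i^K := (\bar{V}^n_i - \bar{V}^*) \wedge K \vee (-K)$ and let $\widetilde{\epsilon}_V^{2,n,K}$ denote the truncated process. Lemma \ref{lem:exp_bounded_Vbar} implies $\widetilde{\epsilon}_V^{2,n,K}$ and $\widetilde{\epsilon}_V^{2,n}$ are exponentially equivalent with rate $b_n^2$ as $n \to \infty$ followed by $K \to \infty$, since the probability that the two processes differ is bounded by $\mathbb{P}(\|\bar{V}^n\|_T > K + |\bar{V}^*|)$. On the truncated process, I apply a Chernoff-type martingale concentration bound. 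Assumption \ref{assump:MDP_W}(ii) yields a constant $c > 0$ such that for $|\lambda| K / (b_n\sqrt{n})$ sufficiently small,
\begin{equation*}
    \mathbb{E}\!\left[\exp\!\left(\tfrac{\lambda (\theta - \Theta_i)\xi_i^K}{b_n\sqrt{n}}\right) \Big|\, \mathcal{F}^n_{i-1}\right] \le \exp\!\left(\tfrac{c \lambda^2 K^2}{b_n^2 n}\right).
\end{equation*}
Iterating this conditional bound over $i = \lfloor ns\rfloor,\dots,\lfloor nt\rfloor - 1$, invoking Markov's inequality, and optimizing in $\lambda$ yields
\begin{equation*}
    \mathbb{P}\!\left(|\widetilde{\epsilon}_V^{2,n,K}(t)-\widetilde{\epsilon}_V^{2,n,K}(s)| > \eta\right) \le 2\exp\!\left(-\tfrac{\eta^2 b_n^2}{4 c K^2 (t-s)}\right).
\end{equation*}
The admissibility constraint on $\lambda$ (requiring the optimal $\lambda$ to lie in the exponential-moment domain) is met for all large $n$ precisely because $b_n/\sqrt{n}\to 0$. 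Applying Doob's maximal inequality on each interval of a partition of $[0,T]$ into pieces of length $\delta$, followed by a union bound, produces the modulus bound
\begin{equation*}
    \limsup_{n \to \infty} \tfrac{1}{b_n^2}\log \mathbb{P}\!\left(\sup_{|s-t|\le \delta}|\widetilde{\epsilon}_V^{2,n,K}(t) - \widetilde{\epsilon}_V^{2,n,K}(s)| > \eta\right) \le -\tfrac{c' \eta^2}{K^2 \delta} \xrightarrow[\delta \downarrow 0]{} -\infty,
\end{equation*}
for each fixed $K$; combined with exponential equivalence, taking $n\to\infty$, then $\delta\downarrow 0$, then $K \to \infty$ gives the required modulus statement for $\widetilde{\epsilon}_V^{2,n}$.

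For the jump criterion, the maximal jump is at most $(|\theta| + \max_{i \le nT}|\Theta_i|) \cdot (\|\bar{V}^n\|_T + |\bar{V}^*|)/(b_n\sqrt{n})$. The exponential moment of $|\Theta_0|$ combined with a union bound yields $\mathbb{P}(\max_{i\le nT}|\Theta_i| > x b_n\sqrt{n}) \le nT\, \mathbb{E}[e^{a|\Theta_0|}] e^{-ax b_n\sqrt{n}}$, so $\frac{1}{b_n^2}\log\mathbb{P}(\cdot) \le \frac{\log(nT)}{b_n^2} - \frac{ax\sqrt{n}}{b_n} \to -\infty$ since $b_n/\sqrt{n}\to 0$. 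Pairing this with Lemma \ref{lem:exp_bounded_Vbar} shows the maximal jump is super-exponentially small, completing the $\mathcal{C}$-exponential tightness.

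The main obstacle is the bookkeeping in the Chernoff step: the exponential-moment hypothesis only gives subgaussian control for $\lambda$ in a bounded domain, yet the optimal tilt $\lambda^\star \sim \eta b_n^2/(K^2(t-s))$ grows with $n$. The key observation that makes everything work is that $\lambda^\star / (b_n\sqrt{n}) \sim b_n/\sqrt{n} \to 0$, so admissibility holds for large $n$ uniformly on compact $K$-ranges; this is precisely the moderate-deviation scaling and is the reason the argument would fail in the full large-deviation regime.
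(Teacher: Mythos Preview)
Your proposal is correct and follows essentially the same route as the paper: truncate using the exponential stochastic boundedness of $\bar V^n$ from Lemma~\ref{lem:exp_bounded_Vbar}, then control the truncated process via an exponential (Chernoff) martingale bound whose admissible tilt is guaranteed precisely by the moderate-deviation scaling $b_n/\sqrt n\to 0$. The paper uses indicator truncation $1_{\{|\bar V^n_i-\bar V^*|\le K\}}$ and verifies the two conditions of Theorem~\ref{thm:C_exp_tightness_in_D} (pointwise exponential tightness plus a uniform-in-$t$ increment bound), whereas you use clipping and a direct $w(\cdot,\delta)$ modulus bound; these are equivalent bookkeeping choices. Two minor remarks: (i) your phrase ``exponentially equivalent as $n\to\infty$ followed by $K\to\infty$'' is imprecise---what you actually use, and what is enough, is that the discrepancy probability is bounded by $\mathbb P(\|\bar V^n\|_T>K+|\bar V^*|)$, which can be pushed below any level $-\alpha$ by choosing $K=K_\alpha$; (ii) your separate jump criterion is redundant, since the $w(\cdot,\delta)$ modulus bound you establish for the original process already forces the maximal jump to vanish super-exponentially.
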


\begin{proof}
    We shall check the two conditions given by Theorem \ref{thm:C_exp_tightness_in_D}.  
    First we check that for each \( t \in [0,T] \), the family of variables  \( \{ \widetilde{\epsilon}^{2,n}_{V}(t) \}_{n \in \mathbb{N}} \) is exponentially tight with rate \( b_n^{2} \). Let \( t \in [0,T] \) and \( \alpha > 0 \). This requires us to find some constant \( K'_{\alpha} \) such that 
    \begin{equation}
        \label{eq:lem_V_eps2_exp_tight_cond_1}
        \limsup_{ n \to \infty} \frac{1}{b_n^{2}} \log \mathbb{P} \left( \vert \widetilde{\epsilon}^{2,n}_{V}(t) \vert > K'_{\alpha}  \right) < - \alpha. 
    \end{equation}
    By Lemma \ref{lem:exp_bounded_Vbar},  there exists \( K_{\alpha} > 0 \) such that 
    \begin{equation}
        \label{eq:lem_V_eps2_exp_tight_outside_gamma}
        \limsup_{n \to \infty} \frac{1}{b_n^{2}} \log \mathbb{P} \left( \| \bar{V}^{n} - \bar{V}^* \|_T > K_{\alpha} \right) < - \alpha. 
    \end{equation}
    Define the event  
    \begin{equation}
        \label{eq:lem_V_eps2_event_bounded_fluid}
        \Gamma^{n} = \{ \| \bar{V}^{n} - \bar{V}^* \| _T \le  K_{\alpha} \} . 
    \end{equation}
    Then using Remark \ref{rmk:analysis_facts}, we can bound the left hand side in \eqref{eq:lem_V_eps2_exp_tight_cond_1} in the following way: 
    \begin{align*}
        & \limsup_{ n \to \infty} \frac{1}{b_n^{2}} \log \mathbb{P} \left( \vert \widetilde{\epsilon}^{2,n}_{V}(t) \vert > K'_{\alpha}  \right) 
        \\
         \le  & 
        \limsup_{n \to \infty} \frac{1}{b_n^{2}} \log \mathbb{P} \left( \{ \widetilde{\epsilon}^{2,n}_{V}(t) > K'_{\alpha}  \} \cap \Gamma^{n} \right)   
        \\
        & \quad  \vee 
        \limsup_{n \to \infty} \frac{1}{b_n^{2}} \log \mathbb{P} \left( \{ - \widetilde{\epsilon}^{2,n}_{V}(t) > K'_{\alpha}  \} \cap \Gamma^{n} \right) \vee
        \limsup_{n \to \infty} \frac{1}{b_n^{2}} \log \mathbb{P} \left( \| \bar{V}^{n} - \bar{V}^* \|_T > K_{\alpha} \right)\,.
    \end{align*}
    Due to \eqref{eq:lem_V_eps2_exp_tight_outside_gamma}, it suffices to find some \( K'_{\alpha} \) such that  
    \begin{equation}
        \label{eq:lem_V_eps2_exp_tight_cond_1_restricted}
        \limsup_{n \to \infty} \frac{1}{b_n^{2}} \log \mathbb{P} \left( \{ \widetilde{\epsilon}^{2,n}_{V}(t) > K'_{\alpha}  \} \cap \Gamma^{n} \right)   < - \alpha.   
    \end{equation}
    A similar statement for the term involving \( - \widetilde{\epsilon}^{2,n}_{V} \) can be shown exactly in the same way. Let \( \Gamma_i^{n} = \{ \vert \bar{V}^{n}_{i} - \bar{V}^* \vert  \le K_{\alpha} \} \). We  define a \( \{ \mathcal{F}^{n}_{k} \}\)-martingale:
    \begin{equation}
        \label{eq:lem_V_eps2_exp_tight_martingale_Z}
        Z ^{n}_k =  \sum_{i=0}^{k} (\theta - \Theta_{i}) (\bar{V}^{n}_{i}- \bar{V}^{*}) 1_{\Gamma^{n}_i}, \quad k \in \mathbb{N}_{0}. 
    \end{equation}
    An application of the Markov's inequality yields that    
    \begin{align}
        \frac{1}{b_n^{2}} \log \mathbb{P} \left( \{ \widetilde{\epsilon}^{2,n}_{V}(t) > K'_{\alpha} \} \cap  \Gamma^{n} \right) 
        & = 
        \frac{1}{b_n^{2}} \log \mathbb{P} \left( \left\{  \frac{1}{b_n \sqrt{n}}  Z^{n}_{\left\lfloor nt \right\rfloor-1} > K'_{\alpha} \right\} \cap \Gamma^{n}   \right)  \nonumber
        \\
        & \le   
        \frac{1}{b_n^{2}} \log \mathbb{P} \left(  \frac{1}{b_n \sqrt{n}}  Z^{n}_{\left\lfloor nt \right\rfloor-1} > K'_{\alpha}   \right)  \nonumber
        \\
        & \le 
        -  K'_{\alpha} + \frac{1}{b_n^{2}} \log \mathbb{E} \left[   \exp \left\{\frac{b_n}{\sqrt{n}}  Z^{n}_{\left\lfloor nt \right\rfloor-1} \right\}  \right].
        \label{eq:lem_V_eps2_exp_tight_cond_1_markov_ineq}
    \end{align}
    Next, we make the observation that for each \( n \) large enough, 
    \begin{equation*}
        \zeta^{n}_k = \exp \left\{ \frac{b_n}{\sqrt{n}}   Z^{n}_{k} -  \frac{b_n^{2}}{n}   K_{\alpha}^{2} \sigma^{2}_\Theta k  \right\}, \quad k \in \mathbb{N}_{0},
    \end{equation*}
    is an \(\{ \mathcal{F}_k^n\} \)-supermartingale. To see this, simply observe that for \( n \) sufficiently large, we have 
    \begin{align}
        & \log \mathbb{E} \left[ \exp \left\{\frac{b_n}{\sqrt{n}} (\theta-\Theta_i)(\bar{V}^{n}_{i} - \bar{V}^*) 1_{\Gamma^{n}_{i}} \right\}  \Big| \mathcal{F}_{i-1}^n \right] \nonumber
        \\
        = \ &  \frac{1}{2} \frac{b_n^{2}}{n} (\bar{V}^{n}_{i} - \bar{V}^*)^{2} 1_{\Gamma^{n}_{i}} \mathbb{E} (\theta-\Theta_i)^{2} + \mathcal{O} \left( \frac{b_n^{3}}{n\sqrt{n}} (\bar{V}^{n}_{i} - \bar{V}^*)^{3} 1_{\Gamma^{n}_{i}} \mathbb{E} (\theta-\Theta_i)^{3}  \right) \nonumber
        \\
        \le \  & \frac{1}{2} \frac{b_n^{2}}{n} K_{\alpha}^{2} \sigma_{\Theta}^{2} + \mathcal{O} \left( \frac{b_n^{3}}{n\sqrt{n}} K_{\alpha} ^{3}  \mathbb{E} \vert \theta-\Theta_i\vert ^{3}  \right) \nonumber
        \\
        \le \  & \frac{b_n^{2}}{n} K_{\alpha}^{2} \sigma_{\Theta}^{2}. \label{eq:lem_V_eps2_exp_tight_supermtg_check} 
    \end{align}
    In the first equality above, because of Assumption \ref{assump:MDP_W} (ii) and the fact that \( b_n n^{-1/2} \to 0 \) as \( n \to  \infty \), we can perform a series expansion for the cumulant moment generating function. The last equality comes from taking \( n \) large enough such that  
    \begin{equation*}
        \mathcal{O} \left( \frac{b_n^{3}}{n\sqrt{n}} K_{\alpha} ^{3}  \mathbb{E} \vert \theta-\Theta_i \vert ^{3}  \right) 
        = \mathcal{O} \left( \frac{b_n^{3}}{n\sqrt{n}} K_{\alpha} ^{2} \sigma_{\Theta}^{2} \right)  
         \le \frac{1}{2} \frac{b_n^{2}}{n} K_{\alpha}^{2} \sigma_{\Theta}^{2}. 
    \end{equation*}
    Therefore \eqref{eq:lem_V_eps2_exp_tight_cond_1_markov_ineq} and the fact that \( \zeta^{n}_{k} \) is a $\{\mathcal{F}_k^n \}$-supermartingale give 
    \begin{equation*}
        \limsup_{n \to \infty} \frac{1}{b_n^{2}} \log \mathbb{P} \left( \{ \widetilde{\epsilon}^{2,n}_{V}(t) > K'_{\alpha} \} \cap  \Gamma^{n} \right) 
        \le 
        - K'_{\alpha} +  K^{2}_{\alpha} \sigma^{2}_{\Theta} t\,. 
    \end{equation*}
    Lastly, we simply choose \( K'_{\alpha} \ge \alpha + K^{2}_{\alpha} \sigma^{2}_{\Theta} t\) to obtain \eqref{eq:lem_V_eps2_exp_tight_cond_1_restricted}. This concludes our proof of \eqref{eq:lem_V_eps2_exp_tight_cond_1}.

    We next check the second condition in Theorem \ref{thm:C_exp_tightness_in_D}. We will show that for any $\epsilon>0$,
    \begin{equation*}
        \lim_{\delta \to 0} \limsup_{ n \to \infty} \sup_{t \in [0,T]} \frac{1}{ b_n^2} \log \mathbb{P} \left( \sup_{s \in [0,\delta]} \vert \widetilde{\epsilon}^{2,n}_{V}(t+s) - \widetilde{\epsilon}^{2,n}_{V}(t) \vert > \epsilon  \right)  = - \infty. 
    \end{equation*}
    Similar to \eqref{eq:lem_V_eps2_exp_tight_cond_1_restricted}, let \( \alpha > 0 \), and \( \Gamma^{n} \) be as defined in \eqref{eq:lem_V_eps2_event_bounded_fluid}. Then it suffices to show 
    \begin{equation}
        \label{eq:lem_V_eps2_exp_tight_cond_2_restricted}
        \lim_{\delta \to 0} \limsup_{ n \to \infty} \sup_{t \in [0,T]} \frac{1}{b_n^{2}} \log \mathbb{P} \left( \left\{ \sup_{s \in [0,\delta]}  \widetilde{\epsilon}^{2,n}_{V}(t+s) - \widetilde{\epsilon}^{2,n}_{V}(t)  > \epsilon  \right\} \cap \Gamma^{n} \right)  = - \infty. 
    \end{equation}
    Again, let \( \Gamma_i^{n} = \{ \vert \bar{V}^{n}_{i} - \bar{V}^* \vert  \le K_{\alpha} \} \), and recall the \( \{ \mathcal{F}^{n}_{k} \}_{k\in \mathbb{N}_{0}}\)-martingale \( \{Z^{n}_{k}, k \in \mathbb{N}_{0}\} \) defined in \eqref{eq:lem_V_eps2_exp_tight_martingale_Z}. The key observation is that for any \( t \in [0,T] \) and $n$, the process \( \{ Z^{n}_{\left\lfloor nt \right\rfloor + k} - Z^{n}_{\left\lfloor nt \right\rfloor  },\ k \in \mathbb{N}_0   \} \) is a \(\{ \mathcal{F}^{n}_{\lfloor nt \rfloor+k}\}_{k\in \mathbb{N}_{0}}\)-martingale.

    Let \( \rho > 0 \) and \( n \) sufficiently large. We have 
    \begin{align*}
        & \frac{1}{b_n^{2}} \log \mathbb{P} \left( \left\{ \sup_{s \in [0,\delta]}  \widetilde{\epsilon}^{2,n}_{V}(t+s) - \widetilde{\epsilon}^{2,n}_{V}(t)  > \epsilon  \right\} \cap \Gamma^{n} \right) 
        \\
        \le \ & 
        \frac{1}{b_n^{2}} \log \mathbb{P} \left( \max_{0 \le k \le \left\lfloor n(t+\delta) \right\rfloor - \left\lfloor nt \right\rfloor-1} Z^{n}_{\left\lfloor nt \right\rfloor + k} - Z^{n}_{\left\lfloor nt \right\rfloor  }    > \epsilon    \right) 
        \\
        = \ & 
        \frac{1}{b_n^{2}} \log \mathbb{P} \left( \max_{0 \le k \le \left\lfloor n(t+\delta) \right\rfloor - \left\lfloor nt \right\rfloor-1} \exp \left\{b_n^{2}\rho\left( Z^{n}_{\left\lfloor nt \right\rfloor + k} - Z^{n}_{\left\lfloor nt \right\rfloor  } \right) \right\}    > e^{b_n^{2} \rho \epsilon}  \right) 
        \\
        \le  \ & 
        - \rho \epsilon + \frac{1}{b_n^{2}} \log \mathbb{E} \left[ \exp \left\{b_n^{2}\rho\left( Z^{n}_{\left\lfloor n(t+\delta) \right\rfloor -1} - Z^{n}_{\left\lfloor nt \right\rfloor  } \right) \right\} \right] 
        \\
        \le \ & 
        - \rho \epsilon + K^{2}_\alpha \rho^{2} \sigma^{2}_{\Theta} \delta. 
    \end{align*}
    In the relations above, the second inequality is obtained by Doob's submartingale inequality. The last inequality uses the following supermartingale:  
    \[
        \exp \left\{b_n^{2}\rho\left( Z^{n}_{\left\lfloor nt \right\rfloor+k} - Z^{n}_{\left\lfloor nt \right\rfloor  } \right)  -  \frac{b_n^{2}}{n} \rho^{2}   K_{\alpha}^{2} \sigma^{2}_\Theta k  \right\}  , \quad k \in \mathbb{N}_0,
    \]
    and the fact that its expectation is less than or equal to one.  
    Finally, taking the limit gives 
    \begin{equation*}
        \lim_{\delta \to 0} \limsup_{ n \to \infty} \sup_{t \in [0,T]} \frac{1}{b_n^{2}} \log \mathbb{P} \left( \left\{ \sup_{s \in [0,\delta]}  \widetilde{\epsilon}^{2,n}_{V}(t+s) - \widetilde{\epsilon}^{2,n}_{V}(t)  > \epsilon  \right\} \cap \Gamma^{n} \right) \le  - \rho \epsilon. 
    \end{equation*}
    Since \( \rho > 0 \) was taken arbitrarily, we take \( \rho \to \infty \) to obtain \eqref{eq:lem_V_eps2_exp_tight_cond_2_restricted}. This concludes the proof.
\end{proof}

\begin{theorem} 
    \label{thm:exp_tightness_V}
    Under Assumptions \ref{assump:model}, \ref{assump:MDP_W} (ii), (iii) and \ref{assump:V}, the family \(\{ \widetilde{V} ^{ n},\; n  \in \mathbb{N} \}  \) is exponentially tight in the space \( \mathcal{D}_T \) with rate \( b_n^{2} \). 
\end{theorem}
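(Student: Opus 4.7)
The plan is to represent $\widetilde{V}^n$ as a continuous image of a driving process and reduce the required exponential tightness to that of this driver. Starting from the representation \eqref{eq:V_md_representation}, I would first observe that at each stable fixed point considered in Theorem~\ref{thm:MDP_V} we have $\mu - \theta\bar{V}^* = 0$ (whether $\bar{V}^* = \mu/\theta$ in case (i) or $\bar{V}^* = 0$ with $\mu = 0$ in case (ii)), so the deterministic term $\frac{\sqrt{n}}{b_n}(\mu - \theta\bar{V}^*)t$ in \eqref{eq:V_md_representation} vanishes. Grouping the remaining non-integral summands into
\[
    Y^n(t) := \widetilde{V}^n(0) + \widetilde{R}^n_X(t) - \bar{V}^*\widetilde{R}^n_\Theta(t) + \tfrac{\sqrt{n}}{b_n}(\mu_n - \mu)\,t + \widetilde{\epsilon}^{1,n}_V(t) + \widetilde{\epsilon}^{2,n}_V(t) + \widetilde{\epsilon}^{3,n}_V(t),
\]
the recursion becomes $\widetilde{V}^n(t) = Y^n(t) - \int_0^t \theta\,\widetilde{V}^n(s)\,\mathrm{d}s$, which identifies $\widetilde{V}^n = \mathcal{M}_\theta(Y^n)$. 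Since $\mathcal{M}_\theta$ is continuous on $\mathcal{D}_T$, exponential tightness of $\widetilde{V}^n$ with rate $b_n^2$ will follow once the same is established for $Y^n$.

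Next, I would show exponential tightness of $Y^n$ summand-by-summand, using the fact that a finite sum of $b_n^2$-exponentially tight families is $b_n^2$-exponentially tight. Concretely: $\widetilde{V}^n(0)$ converges super-exponentially to $v_0$ by Assumption~\ref{assump:V} and is trivially exp-tight; $\widetilde{R}^n_X$ and $\widetilde{R}^n_\Theta$ are exp-tight as a consequence of the MDPs in Theorem~\ref{thm:MDP_RW}; the deterministic process $\tfrac{\sqrt{n}}{b_n}(\mu_n - \mu)\mathfrak{e}$ converges in $\mathcal{D}_T$ to $r\mathfrak{e}$ by Assumption~\ref{assump:MDP_W}(iii), hence is exp-tight; $\widetilde{\epsilon}^{3,n}_V$ is uniformly bounded by $\frac{1}{b_n\sqrt{n}}|\mu_n - \theta\bar{V}^*| = \frac{1}{b_n\sqrt{n}}|\mu_n - \mu|$, which tends to $0$, so it converges super-exponentially to $0$; and $\widetilde{\epsilon}^{2,n}_V$ is exp-tight by Lemma~\ref{lem:exp_tight_esp_V_2}.

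The remaining summand $\widetilde{\epsilon}^{1,n}_V$ would be treated directly. Since $\widetilde{V}^n(s) = \widetilde{V}^n_{\lfloor ns \rfloor}$, the identity
\[
    \int_0^t \widetilde{V}^n(s)\,\mathrm{d}s = \frac{1}{n}\sum_{i=0}^{\lfloor nt\rfloor - 1}\widetilde{V}^n_i + \bigl(t - \tfrac{\lfloor nt\rfloor}{n}\bigr)\widetilde{V}^n_{\lfloor nt\rfloor}
\]
yields the uniform bound $\|\widetilde{\epsilon}^{1,n}_V\|_T \le \tfrac{|\theta|}{n}\|\widetilde{V}^n\|_T = \tfrac{|\theta|}{b_n\sqrt{n}}\|\bar{V}^n - \bar{V}^*\|_T$. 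Thus for any $\delta > 0$ and any $K > 0$, there exists $n_0$ such that for $n \ge n_0$ the inclusion $\{\|\widetilde{\epsilon}^{1,n}_V\|_T > \delta\} \subset \{\|\bar{V}^n\|_T > K\}$ holds. Applying Lemma~\ref{lem:exp_bounded_Vbar} and letting $K \to \infty$ after $\limsup_n$ gives $\widetilde{\epsilon}^{1,n}_V \stackrel{P^{1/b_n^2}}{\to} 0$, which is stronger than exp-tightness.

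The truly technical work is compressed into Lemma~\ref{lem:exp_tight_esp_V_2} (the martingale/supermartingale and Doob argument controlling the random-coefficient random walk $\widetilde{\epsilon}^{2,n}_V$) and Lemma~\ref{lem:exp_bounded_Vbar} (the exponential stochastic boundedness of $\bar{V}^n$, which is also what tames $\widetilde{\epsilon}^{1,n}_V$). With those two inputs and the stable-fixed-point cancellation $\mu - \theta\bar{V}^* = 0$, the present theorem would follow by continuity of $\mathcal{M}_\theta$ applied to the exp-tight driver $Y^n$; no additional subtlety is expected beyond the preceding lemmas.
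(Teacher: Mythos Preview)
Your proposal is correct and follows essentially the same approach as the paper's proof: write $\widetilde{V}^n = \mathcal{M}_\theta(Y^n)$, verify exponential tightness of each summand of $Y^n$ (with $\widetilde{\epsilon}^{1,n}_V$ and $\widetilde{\epsilon}^{3,n}_V$ shown to converge super-exponentially to $0$ and $\widetilde{\epsilon}^{2,n}_V$ handled by Lemma~\ref{lem:exp_tight_esp_V_2}), and then invoke continuity of $\mathcal{M}_\theta$ via Lemma~\ref{lem:exp_tightness_cts_map}. The paper likewise uses the cancellation $\mu - \theta\bar{V}^* = 0$ (implicitly, by simply omitting that term from the driver) and the same bound $\|\widetilde{\epsilon}^{1,n}_V\|_T \le \tfrac{|\theta|}{b_n\sqrt{n}}\|\bar{V}^n - \bar{V}^*\|_T$ together with Lemma~\ref{lem:exp_bounded_Vbar}.
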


\begin{proof}
    We write \eqref{eq:V_md_representation} as  
    \begin{equation*}
        \widetilde{V}^{n} = \mathcal{M}_\theta \left( \widetilde{V}^{n}_0 + \widetilde{R}^{n}_{X}  - \overline{V}^{*} \widetilde{R}^{n}_{\Theta}  + \frac{\sqrt{n}}{b_n}(\mu_n - \mu) \mathfrak{e} + \widetilde{\epsilon}^{1,n}_V  + \widetilde{\epsilon}^{2,n}_V   + \widetilde{\epsilon}^{3,n}_V  \right), 
    \end{equation*}
    and analyze each of the terms. First, by Assumption \ref{assump:V} and Theorem \ref{thm:MDP_RW}, the families of processes \( \{\widetilde{V}^{n}_0 \}_{n \in \mathbb{N}} \), \( \{\widetilde{R}^{n}_{X} \}_{n \in \mathbb{N}} \) and \( \{\widetilde{R}^{n}_{\Theta} \}_{n \in \mathbb{N}} \)   are exponentially tight in \(\mathcal{D}_T\) with rate \(b_n^2\). Also, by Assumption \ref{assump:MDP_W} (iii), the term \( b_n^{-1}\sqrt{n} (\mu_n - \mu) \mathfrak{e} \stackrel{P ^{1/b_n^{2}}}{\longrightarrow} r \mathfrak{e} \) and therefore is also exponentially tight in \(\mathcal{D}_T\) with rate \(b_n^2\). 
    Now we claim that 
    \begin{equation}
        \label{eq:lem_V_esp_1}
        \widetilde{\epsilon}_V^{1,n} \stackrel{P ^{1/b_n^{2}}}{ \longrightarrow } 0 \quad \textrm{and} \quad  \widetilde{\epsilon}_V^{3,n} \stackrel{P ^{1/b_n^{2}}}{ \longrightarrow } 0. 
    \end{equation}
    Observe that  
    \begin{equation*}
         \big\| \widetilde{\epsilon} ^{1,n}_V \big\|_{T} \le  \frac{|\theta|}{n} \big\| \widetilde{V}  ^{n} \big\|_T    =   \frac{\left\vert \theta \right\vert }{b_n \sqrt{n}} \left\| \bar{V}^{n} - \bar{V}^* \right\| _{T}. 
    \end{equation*}
    Let \( \alpha > 0 \). By Lemma \ref{lem:exp_bounded_Vbar}, there exists \( K_\alpha > 0  \) such that 
    \[
        \limsup_{ n \to \infty} \frac{1}{b_n^{2}} \log \mathbb{P} \left( \left\| \bar{V} ^{n} - \bar{V} ^* \right\|_T > K_{\alpha} \right) < - \alpha. 
    \]
    Let \( \epsilon > 0 \), for \( n \) large enough such that \( \epsilon > \frac{|\theta|}{b_n \sqrt{n}} K_\alpha  \), we have 
    \begin{equation*}
        \mathbb{P} \left( \big\| \widetilde{\epsilon}_{V} ^{1,n} \big\|_T > \epsilon \right) \le 
        \mathbb{P} \left( \big\| \widetilde{\epsilon}_{V} ^{1,n} \big\|_T > \frac{|\theta| }{b_n \sqrt{n}} K_{\alpha} \right) \le 
        \mathbb{P} \left( \left\| \bar{V} ^{n} - \bar{V}^*  \right\|_T >   K_{\alpha} \right). 
    \end{equation*}
    This implies 
    \[
        \limsup_{ n \to \infty} \frac{1}{b_n^{2}} \log \mathbb{P}\left( \big\| \widetilde{\epsilon}_{V} ^{1,n} \big\|_T > \epsilon \right) < -\alpha.
    \]
    Since \( \alpha \) is arbitrary, we obtain the first statement in  \eqref{eq:lem_V_esp_1} by taking \( \alpha \to \infty \) and using Lemma \ref{lem:supexp_char}.  For the second statement involving \( \widetilde{\epsilon}^{3,n}_V \), since \( \mu_{n} \to \mu \) and \( b_{n}  \sqrt{n} \to \infty \) as \( n \to  \infty \), then for any \( \epsilon > 0 \), 
    \begin{equation*}
        \limsup_{n \to \infty} \frac{1}{b_n^{2}} \log \mathbb{P} \left( \| \widetilde{\epsilon}^{3,n}_{V} \|_T > \epsilon  \right) \le \limsup_{n \to \infty} \frac{1}{b_n^{2}} \log \mathbb{P} \left( \frac{1}{b_{n} \sqrt{n} } \vert \mu_{n} - \theta \bar{V}^{*} \vert   > \epsilon  \right) = -\infty. 
    \end{equation*}
    This proves \eqref{eq:lem_V_esp_1} by Lemma \ref{lem:supexp_char}. 
    
    Finally, exponential tightness of \( \{ \widetilde{\epsilon}^{2,n}_{V} \}_{n \in \mathbb{N}} \) in \( \mathcal{D}_T \) is given in Lemma \ref{lem:exp_tight_esp_V_2}. Then since \( \mathcal{M}_{\theta} \) is continuous in \( \mathcal{D}_T \), we can use Lemma \ref{lem:exp_tightness_cts_map} to conclude \( \{ \widetilde{V}^{n} \}_{n \in \mathbb{N}} \) is exponentially tight in \( \mathcal{D}_T \).
\end{proof}

By Lemma \ref{lem:exptight_fluid0}, a consequence of exponential tightness of \( \{ \widetilde{V}^{n} \}_{n \in \mathbb{N}} \) in \( \mathcal{D}_T \) is the following corollary, which will be next used to further analyze the error terms in \eqref{eq:V_md_representation}. We use a slight abuse of notation by letting \(\bar{V}^*\) be the constant process in \(\mathcal{D}_T\) instead of a constant. 

\begin{corollary}
    \label{cor:V_bar}
   Under Assumptions \ref{assump:model}, \ref{assump:MDP_W} (ii), (iii) and \ref{assump:V}, we have 
    \[
        \bar{V}^{n}  \stackrel{p^{1/b_n^{2}}}{\longrightarrow}   \bar{V}^* . 
    \]
\end{corollary}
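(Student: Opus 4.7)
The plan is to reduce the corollary directly to the exponential tightness of $\{\widetilde{V}^n\}$ established in Theorem \ref{thm:exp_tightness_V}, exploiting the fact that $\bar{V}^n - \bar{V}^* = (b_n/\sqrt{n})\, \widetilde{V}^n$ by \eqref{eq:md_V} and that $b_n/\sqrt{n} \to 0$. This is precisely the content of the auxiliary Lemma \ref{lem:exptight_fluid0} from the appendix: exponential tightness with rate $b_n^2$, combined with multiplication by a deterministic null sequence, upgrades to super-exponential convergence to zero at the same rate.

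Concretely, I would argue as follows. Fix $\varepsilon > 0$ and $\alpha > 0$ arbitrary. By Theorem \ref{thm:exp_tightness_V}, there exists a compact set $K_\alpha \subset \mathcal{D}_T$ with
\[
\limsup_{n \to \infty} \frac{1}{b_n^{2}} \log \mathbb{P}\bigl( \widetilde{V}^{n} \notin K_\alpha \bigr) < -\alpha.
\]
Since any $J_1$-compact subset of $\mathcal{D}_T$ is uniformly bounded in sup-norm, set $M_\alpha := \sup_{x \in K_\alpha} \|x\|_T < \infty$. Because $b_n/\sqrt{n} \to 0$, there exists $N_\alpha$ such that $\varepsilon \sqrt{n}/b_n > M_\alpha$ for all $n \ge N_\alpha$, giving the inclusion
\[
\bigl\{ \|\bar{V}^{n} - \bar{V}^* \|_T > \varepsilon \bigr\} \;=\; \bigl\{ \|\widetilde{V}^n\|_T > \varepsilon \sqrt{n}/b_n \bigr\} \;\subset\; \bigl\{ \widetilde{V}^n \notin K_\alpha \bigr\}.
\]
Hence
\[
\limsup_{n \to \infty} \frac{1}{b_n^{2}} \log \mathbb{P} \bigl( \|\bar{V}^{n} - \bar{V}^* \|_T > \varepsilon \bigr) < -\alpha.
\]
Since $\alpha > 0$ was arbitrary, letting $\alpha \to \infty$ and invoking the characterization of super-exponential convergence in probability (Lemma \ref{lem:supexp_char}) yields $\bar{V}^{n} \stackrel{p^{1/b_n^{2}}}{\longrightarrow} \bar{V}^*$.

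There is essentially no technical obstacle here: the argument is mechanical once exponential tightness of $\widetilde{V}^n$ is in hand, and the only fact used beyond that is the uniform boundedness of $J_1$-compact sets in $\mathcal{D}_T$. If the appendix lemma \ref{lem:exptight_fluid0} is stated in exactly this scaling form, the proof reduces to a one-line citation; otherwise the short derivation above suffices.
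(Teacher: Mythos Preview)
Your proposal is correct and matches the paper's approach exactly: the paper states the corollary as an immediate consequence of Theorem \ref{thm:exp_tightness_V} via Lemma \ref{lem:exptight_fluid0}, and your detailed argument is essentially the proof of that lemma specialized to $c_n = \sqrt{n}/b_n$ and $x_n = \bar{V}^n - \bar{V}^*$.
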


\subsection{Proofs for MDP Results in Section \ref{subsec:V_MDP_results}}  \label{subsec:MDP_V_proof} 

\begin{lemma}
    \label{lem:eps_V_expeq_0}
    Under Assumptions \ref{assump:model}, \ref{assump:MDP_W} (ii), (iii) and \ref{assump:V}, the families of processes \( \{\widetilde{\epsilon}^{1,n}_{V} \}_{n \in \mathbb{N}} \), \( \{\widetilde{\epsilon}^{2,n}_V \}_{n \in \mathbb{N}} \) and \( \{ \widetilde{\epsilon}^{3,n}_{V} \}_{n \in \mathbb{N}} \) are exponentially equivalent to the \( 0 \) process with rate \( b_n^{2} \). 
\end{lemma}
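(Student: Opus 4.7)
The plan is to handle the three error terms in turn, with essentially all the substantive work concentrated on $\widetilde{\epsilon}^{2,n}_V$. For $\widetilde{\epsilon}^{1,n}_V$ and $\widetilde{\epsilon}^{3,n}_V$, the statement is already contained in the proof of Theorem \ref{thm:exp_tightness_V}, where we established $\widetilde{\epsilon}^{1,n}_V \stackrel{P^{1/b_n^2}}{\longrightarrow} 0$ and $\widetilde{\epsilon}^{3,n}_V \stackrel{P^{1/b_n^2}}{\longrightarrow} 0$; by definition this is exponential equivalence to the zero process at rate $b_n^2$, so nothing new is needed.

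For $\widetilde{\epsilon}^{2,n}_V$, the strategy is to refine the truncated-martingale argument from Lemma \ref{lem:exp_tight_esp_V_2} by exploiting the stronger super-exponential bound on $\bar{V}^n$ given by Corollary \ref{cor:V_bar}. Fix $\delta > 0$ and $\alpha > 0$. For $K > 0$ to be chosen later, set $\Gamma^n = \{\|\bar{V}^n - \bar{V}^*\|_T \le K\}$ and $\Gamma^n_i = \{|\bar{V}^n_i - \bar{V}^*| \le K\}$, and define the $\{\mathcal{F}^n_k\}$-martingale
\[
Z^n_k = \sum_{i=0}^{k}(\theta - \Theta_i)(\bar{V}^n_i - \bar{V}^*)\, 1_{\Gamma^n_i},
\]
so that $b_n\sqrt{n}\,\widetilde{\epsilon}^{2,n}_V(t) = Z^n_{\lfloor nt \rfloor - 1}$ on the event $\Gamma^n$. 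The cumulant expansion in \eqref{eq:lem_V_eps2_exp_tight_supermtg_check}, carried out with $b_n/\sqrt{n}$ replaced by $\rho b_n/\sqrt{n}$ for a free parameter $\rho > 0$, shows that for all $n$ sufficiently large the process
\[
\exp\!\left\{ \frac{\rho b_n}{\sqrt{n}}\, Z^n_k - \frac{\rho^2 b_n^2}{n}\, K^2 \sigma_\Theta^2\, k \right\},\quad k \in \mathbb{N}_0,
\]
is an $\{\mathcal{F}^n_k\}$-supermartingale, and likewise with $Z^n_k$ replaced by $-Z^n_k$.

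Applying Doob's submartingale inequality to $\exp\{\rho b_n n^{-1/2} Z^n_k\}$ over $0 \le k \le \lfloor nT \rfloor - 1$ and handling $-Z^n_k$ symmetrically yields
\[
\limsup_{n \to \infty} \frac{1}{b_n^2} \log \mathbb{P}\!\left(\{\|\widetilde{\epsilon}^{2,n}_V\|_T > \delta\} \cap \Gamma^n \right) \;\le\; -\rho \delta + \rho^2 K^2 \sigma_\Theta^2\, T.
\]
Optimizing in $\rho$ gives the bound $-\delta^2/(4 K^2 \sigma_\Theta^2 T)$, which can be made smaller than $-\alpha$ by choosing $K$ sufficiently small. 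For that same $K$, Corollary \ref{cor:V_bar} gives $\limsup_n b_n^{-2} \log \mathbb{P}(\Omega \setminus \Gamma^n) < -\alpha$. Combining these two estimates via Remark \ref{rmk:analysis_facts} and then sending $\alpha \to \infty$ produces $\widetilde{\epsilon}^{2,n}_V \stackrel{P^{1/b_n^2}}{\longrightarrow} 0$. The main obstacle is precisely this optimization step: in Lemma \ref{lem:exp_tight_esp_V_2} the truncation constant $K_\alpha$ was dictated by the exponential stochastic boundedness of $\bar{V}^n$ and had to grow with $\alpha$, which capped the exponential decay rate and yielded only exponential tightness; here Corollary \ref{cor:V_bar} lets $K$ be chosen as small as we like while still controlling $\mathbb{P}(\Omega \setminus \Gamma^n)$ at rate $b_n^2$, converting tightness into super-exponential decay and thereby upgrading Lemma \ref{lem:exp_tight_esp_V_2} to the statement of the present lemma.
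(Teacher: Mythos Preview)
Your proposal is correct and follows essentially the same approach as the paper: defer $\widetilde{\epsilon}^{1,n}_V$ and $\widetilde{\epsilon}^{3,n}_V$ to the proof of Theorem~\ref{thm:exp_tightness_V}, and for $\widetilde{\epsilon}^{2,n}_V$ rerun the truncated-martingale/supermartingale argument of Lemma~\ref{lem:exp_tight_esp_V_2} with the truncation level now taken arbitrarily small via Corollary~\ref{cor:V_bar}. The only cosmetic difference is in the final step, where the paper sends the truncation level $\eta \to 0$ first and then $\rho \to \infty$, while you optimize in $\rho$ first and then shrink $K$; both routes yield the same conclusion.
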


\begin{proof}
    We have already shown the assertion for \( \widetilde{\epsilon}^{1,n}_V \) and \( \widetilde{\epsilon}^{3,n}_V \)  in the proof for Theorem \ref{thm:exp_tightness_V}, see \eqref{eq:lem_V_esp_1}. To prove the statement for \( \widetilde{\epsilon}^{2,n}_{V} \), we use arguments similar to those in the proof of Lemma \ref{lem:exp_tight_esp_V_2}. 
    
    Let \( \epsilon > 0 \) and \( \eta > 0 \). Define the event 
    \begin{equation}
        \Gamma^{n} = \{ \| \bar{V}^{n} - \bar{V}^{*} \|_T \le  \eta \}. 
    \end{equation}
    Due to Remark \ref{rmk:analysis_facts}, Lemma \ref{lem:supexp_char} and Corollary \ref{cor:V_bar}, it suffices to show 
    \begin{equation}
        \label{eq:lem_V_eps2_exp_0_condn}
        \limsup_{n \to \infty} \frac{1}{b_n^{2}} \log \mathbb{P} \left( \left\{ \sup_{t \in [0,T]} \widetilde{\epsilon}^{2,n}_{V} (t) > \epsilon \right\} \cap \Gamma^{n} \right) = - \infty. 
    \end{equation} 
    For each \(i \in \mathbb{N}_0\), let \( \Gamma_i^{n} = \{ \vert \bar{V}^{n}_{i} - \bar{V}^* \vert  \le \eta \} \). We can define a \( \{ \mathcal{F}^{n}_{k} \}_{k\in \mathbb{N}_0}\)-martingale 
    \begin{equation}
        \label{eq:lem_V_eps2_exp_0_martingale_Z}
        Z ^{n}_k =  \sum_{i=0}^{k} (\theta - \Theta_{i}) (\bar{V}^{n}_{i}- \bar{V}^{*}) 1_{\Gamma^{n}_i}, \quad k \in \mathbb{N}_{0}. 
    \end{equation}
    Letting \( \rho > 0 \), we have the following:  
    \begin{align}
        & \limsup_{n \to \infty} \frac{1}{b_n^{2}} \log \mathbb{P} \left( \left\{ \sup_{t \in [0,T]} \widetilde{\epsilon}^{2,n}_{V}(t) > \epsilon \right\} \cap  \Gamma^{n} \right) \nonumber 
       \\ & \le   
        \limsup_{n \to \infty} \frac{1}{b_n^{2}} \log \mathbb{P} \left( \max_{0 \le k \le \left\lfloor nT \right\rfloor-1} \frac{1}{b_n \sqrt{n}}  Z^{n}_{\left\lfloor nt \right\rfloor-1} > \epsilon   \right)  \nonumber
        \\
        & \le 
        - \rho \epsilon + \limsup_{n \to \infty} \frac{1}{b_n^{2}}  \log \mathbb{E} \left[   \exp \left\{\frac{b_n}{\sqrt{n}} \rho Z^{n}_{\left\lfloor nT \right\rfloor-1} \right\}  \right] 
        \label{eq:lem_V_eps2_exp_0_doob} 
        \\
        & \le 
        - \rho \epsilon + \rho^{2} \eta^{2} \sigma^{2}_\Theta T. \label{eq:lem_V_eps2_exp_0_supermtg} 
    \end{align}
    In the above derivations, \eqref{eq:lem_V_eps2_exp_0_doob} is due to an application of Doob's submartingale inequality for the \( \{ \mathcal{F}^{n}_{k} \}_{k\in \mathbb{N}_0}\)-submartingale \( \{ \exp(b_n n^{-1/2} \rho Z^{n}_{k}), k \in \mathbb{N}_0 \}\). \eqref{eq:lem_V_eps2_exp_0_supermtg} uses the fact that the process 
    \begin{equation*}
        \zeta^{n}_k = \exp\left\{ \frac{b_n}{\sqrt{n}} \rho  Z^{n}_{k} -  \frac{b_n^{2}}{n} \rho^{2} \eta^{2} \sigma^{2}_\Theta k   \right\}, \quad k \in \mathbb{N}_0, 
    \end{equation*}
    is a supermartingale when \( n \) is large. One can check this by following similar steps as in \eqref{eq:lem_V_eps2_exp_tight_supermtg_check}. Finally, since \eqref{eq:lem_V_eps2_exp_0_supermtg} holds for arbitrary \( \eta \) and \( \rho \), we can first take \( \eta  \to  0 \) and then \( \rho \to  \infty \) to obtain \eqref{eq:lem_V_eps2_exp_0_condn}, as desired.
\end{proof}

Now we prove the results in Section \ref{subsec:V_MDP_results}. 

\begin{proof}[Proof of Theorem \ref{thm:MDP_V}.]
    This is simply a consequence of \eqref{eq:V_md_representation}, Theorem \ref{thm:MDP_RW}, Lemma \ref{lem:eps_V_expeq_0} and the contraction principle applied to the continuous map \( \mathcal{M}_\theta \).
\end{proof}

\begin{proof}[Proof of Theorem \ref{thm:MDP_V_explicit_rate_fn}.]
    For case (i), consider the optimization problem in Theorem \ref{thm:MDP_V} (i). It suffices to optimize over the set \( \{ ( \psi_1, \psi_2):  \psi_1 \in \mathcal{AC}_0 , \psi_2 \in \mathcal{AC}_0  \} \), otherwise the rate function is infinite. On this set, let \( \phi \in \mathcal{D}_T \) satisfy \( \phi(t) = v_0  + \psi_1(t) - \frac{\mu}{\theta} \psi_2(t) + r t - \int_{0}^{t} \theta \phi(s) \mathrm{d}s  \). Then it is clear that \( \phi  \in \mathcal{AC}  \) and \(\phi(0) = v_0\). 
    The problem reduces to solving the following convex optimization problem a.e. in time \( t \in [0,T] \): 
    \begin{equation*}
    \begin{aligned}
        \min_{\dot{\psi}_{1}, \dot{\psi}_{2} \in \mathbb{R}} \quad & \frac{1}{2 \sigma^{2}_{X}} \dot{\psi}_{1}(t)^{2} + \frac{1}{2 \sigma^{2}_{\Theta}} \dot{\psi}_{2}(t)^{2} \\
        \textrm{s.t.}\quad & \dot{\phi}(t) = \dot{\psi}_{1}(t) - \frac{\mu}{\theta} \dot{\psi}_{2}(t) +r - \theta \phi (t). 
    \end{aligned}
    \end{equation*}
    Let \( f(t) = \dot{\phi}(t) - r + \theta\phi(t) \). The solution is 
    \begin{equation*}
    \begin{aligned}
        \dot{\psi}_1(t) &= \frac{\theta^{2} \sigma^{2}_{X}}{\theta^{2}\sigma^{2}_{X} + \mu^{2} \sigma^{2}_{\Theta}} f (t) ,\\
        \dot{\psi}_2(t) &= - \frac{\mu \theta \sigma^{2}_{\Theta}}{\theta^{2}\sigma^{2}_{X} + \mu^{2} \sigma^{2}_{\Theta}} f (t). 
    \end{aligned} 
    \end{equation*}
    Plugging the solution into \( I_X (\psi_1) + I_\Theta(\psi_2) \) yields the form of the rate function. Case (ii) is solved similarly. This concludes the proof.
\end{proof}

\section{Proofs for MDP Results in Section \ref{subsec:MD_results}} \label{sec:MDP_W} 

First, recall the definition of \( \widetilde{W}^{n} \) in \eqref{eq:W_tilde}. Similar to \eqref{eq:V_md_representation}, we can obtain the following representation: 
\begin{equation}\label{eq:md_representation}
\begin{aligned}  
    \widetilde{W}^{n}(t) 
    &=  \widetilde{W}^{n}(0) +  \widetilde{R}^n_X(t) -   \bar{W}^{*} \widetilde{R}^n_\Theta(t) - \int_{0}^{t} \theta \widetilde{W}^{n} (s) \mathrm{d}s  + \frac{\sqrt{n}}{b_n}(\mu_n - \mu)t  \\
        & \quad \quad  + \frac{\sqrt{n}}{b_n}  \left( \mu - \theta \bar{W}^{*} \right) t + \widetilde{\epsilon}^{1,n}(t) + \widetilde{\epsilon}^{2,n}(t) + \widetilde{\epsilon}^{3,n}(t) +  \widetilde{L}^n(t). 
\end{aligned}
\end{equation}
Above, \( \widetilde{R}^{n}_X \) and \( \widetilde{R}^{n}_{\Theta} \) are the random walks given in \eqref{eq:RW_MD_scaling}. Recalling $\bar{L}^n$ in  \eqref{eq:fluid_W_reflected}, we define $\widetilde{L}^n  :=  b_n^{-1}\sqrt{n}  \bar{L}^{n}$  and the error terms as 
\[
\begin{aligned}
    \widetilde{\epsilon}^{1,n}(t) &= \theta \left(  \int_{0}^{t} \widetilde{W}^{n}(s) \mathrm{d} s  - \frac{1}{n} \sum_{i=0}^{\lfloor nt \rfloor -1} \widetilde{W}^{n}_i \right), \\
    \widetilde{\epsilon}^{2,n}(t) &= \frac{1}{b_n\sqrt{n}} \sum_{i=0}^{\left\lfloor nt \right\rfloor-1} \left( \theta - \Theta_{i} \right) \left( \bar{W}^{n}_{i} - \bar{W}^{*} \right), \\
    \widetilde{\epsilon}^{3,n}(t) &= \frac{\left\lfloor nt \right\rfloor - nt}{b_n\sqrt{n}}    \left( \mu_n - \theta \bar{W}^{*} \right).
\end{aligned} 
\] 

Our goal is to imitate the proof for the MDP results in Section \ref{sec:MDP_LRMS}. However, compared to \eqref{eq:V_md_representation}, equation \eqref{eq:md_representation} contains the extra term \( \widetilde{L}^{n} := (b_n \sqrt{n})^{-1} L^{n} \). In \eqref{eq:fluid_W_reflected}, we saw that \( (\bar{W}^{n}, \bar{L}^{n}) \) is the linearly generalized reflection map of a particular process. Under MD-scalings with the centering term \( \bar{W}^* = 0 \), the same is true for the pair \( (\widetilde{W}^{n}, \widetilde{L}^{n}) \). However, this is no longer the case when \( \bar{W}^* \neq 0 \). This creates some difficulties if we want to apply the contraction principle. To address this, we first provide a way to bound \( W^{n} \) by auxiliary systems. 

\subsection{Bounding the workload by auxiliary systems}
\label{subsec:workload_bound_aux_system} 

In this section, we provide a bound for the workload process \( W^{n} \) defined recursively by 
\begin{equation}
    \label{eq:Wn_lindley_def}
    W^{n}_{i+1} = \max \{ 0,\ C^{n}_{i} W^{n}_{i} + X^{n}_{i} \}, \quad  i \in \mathbb{N}_0.   
\end{equation}
Then we show that the bound can be related to the supremum of certain linearly recursive Markov systems that were studied in Section \ref{sec:MDP_LRMS}.  

First, define the process \( \{ \Upsilon^{n}(t),\ t \in [0,T] \} \) by \( \Upsilon^{n}(t) = \Upsilon^{n}_{\left\lfloor nt \right\rfloor}  \) where 
\begin{equation*}
    \begin{aligned}
    \Upsilon^{n}_0 &=  W^{n}_0, \\
    \Upsilon^{n}_{1} &= \max \big\{ 0,\ X^{n}_{0} + C^{n}_{0}W^{n}_{0} \big\}, \\
    \Upsilon^{n}_{2} &= \max \big\{ 0,\ X^{n}_{1},\ X^{n}_{1} + C^{n}_{1}X^{n}_{0} + C^{n}_{1}C^{n}_{0}W^{n}_{0} \big\},
\end{aligned}
\end{equation*}
and more generally for \( i \ge 2 \),  
\begin{equation*} 
\begin{aligned}
    \Upsilon^{n}_{i+1} &= \max \big\{ 0,\ X^{n}_{i},\ X^{n}_{i} + C^{n}_{i} X^{n}_{i-1},\ \cdots, \  X^{n}_{i} + C^{n}_{i} X^{n}_{i-1} + \cdots + C^{n}_{i} \cdots C^{n}_{2} X^{n}_{1}, \\
    & \quad \quad  X^{n}_{i} + C^{n}_{i} X^{n}_{i-1} + \cdots + C^{n}_{i} \cdots C^{n}_{2} X^{n}_{1} + C^{n}_{i} \cdots C^{n}_{1} X^{n}_{0} + C^{n}_{i} \cdots C^{n}_{0} W^{n}_{0} \big\}. 
\end{aligned}
\end{equation*}
\begin{lemma}
    For all \( i \ge 0 \), we have 
    \begin{equation*}
        0 \le W^{n}_{i} \le \Upsilon^{n}_{i}. 
    \end{equation*}
\end{lemma}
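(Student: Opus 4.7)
The plan is to prove both inequalities by induction on $i$. Non-negativity is immediate: every $W^n_{i+1}$ is defined via a $\max\{0,\cdot\}$ in \eqref{eq:Wn_lindley_def}, and $W^n_0 \ge 0$ by assumption on the initial condition.

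For the upper bound $W^n_i \le \Upsilon^n_i$, the base case $i=0$ holds by the definition $\Upsilon^n_0 = W^n_0$. For the inductive step, assume $W^n_i \le \Upsilon^n_i$. The recursion \eqref{eq:Wn_lindley_def} gives $W^n_{i+1} = \max\{0, C^n_i W^n_i + X^n_i\}$, and the plan is to bound $C^n_i W^n_i + X^n_i$ from above by splitting on the sign of $C^n_i = 1 - \Theta_i/n$. If $C^n_i \ge 0$, the inductive hypothesis together with $W^n_i \ge 0$ yields $C^n_i W^n_i + X^n_i \le C^n_i \Upsilon^n_i + X^n_i$; substituting the defining maximum for $\Upsilon^n_i$ and distributing $C^n_i$ shows that $C^n_i \Upsilon^n_i + X^n_i$ is itself the maximum over exactly the non-zero terms appearing in the definition of $\Upsilon^n_{i+1}$. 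If $C^n_i < 0$, then $C^n_i W^n_i \le 0$ because $W^n_i \ge 0$, hence $C^n_i W^n_i + X^n_i \le X^n_i$, and $X^n_i$ is one of the terms in the maximum defining $\Upsilon^n_{i+1}$. Taking the positive part on both sides in each case, together with the fact that $0$ always appears in this maximum, completes the induction.

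I anticipate no serious obstacles; the only effort required is the bookkeeping needed to match the expansion of $C^n_i \Upsilon^n_i + X^n_i$ term-by-term with the listed entries defining $\Upsilon^n_{i+1}$. The case split on the sign of $C^n_i$ is genuinely needed rather than cosmetic: Assumption \ref{assump:MDP_W} (ii) allows $\Theta_i$ to exceed $n$, so one cannot in general assume $C^n_i \ge 0$ and simply propagate $W^n_i \le \Upsilon^n_i$ through multiplication.
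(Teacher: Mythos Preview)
Your proposal is correct and follows essentially the same approach as the paper's proof: both proceed by induction on $i$, split the inductive step on the sign of $C^n_i$, and use $W^n_i \ge 0$ together with the inductive hypothesis in each case. The paper's proof is simply a terser version of what you describe, stating directly that $\max\{0, C^n_i \Upsilon^n_i + X^n_i\} = \Upsilon^n_{i+1}$ when $C^n_i \ge 0$ rather than spelling out the term-by-term matching you anticipate.
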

\begin{proof}
    We show this by induction. The case where \( i = 0 \) is obvious. Now let \( i \ge 0 \) and suppose \( 0 \le W^{n}_{i} \le \Upsilon^{n}_{i} \). On the event \( \{ C^{n}_{i} < 0 \}  \), definition \eqref{eq:Wn_lindley_def} implies that $W^{n}_{i+1} \le \max \{ 0, X^{n}_{i} \} \le \Upsilon^{n}_{i+1}$. 
    And on the event \( \{ C^{n}_{i} \ge  0 \}  \), we have $W^{n}_{i+1} \le \max \{ 0, C^{n}_{i} \Upsilon^{n}_{i} + X^{n}_{i} \}  = \Upsilon^{n}_{i+1}$. 
    This concludes the proof. 
\end{proof}

Now consider the following two families of linearly recursive Markov systems \( V^{n}(t) \equiv V^{n}_{\left\lfloor nt \right\rfloor} \) and \( U^{n}(t) \equiv U^{n}_{\left\lfloor nt \right\rfloor} \) with different initial conditions: 
\begin{equation*}
    \begin{cases}
        V^{n}_{i+1} = C^{n}_{i} V^{n}_{i} + X^{n}_{i}, & i \in \mathbb{N}, \\
        V^{n}_{0} = W^{n}_{0}, 
    \end{cases}
\end{equation*}
and 
\begin{equation*} 
    \begin{cases}
        U^{n}_{i+1} = C^{n}_{i} U^{n}_{i} + X^{n}_{i}, & i \in \mathbb{N}, \\
        U^{n}_{0} = 0. 
    \end{cases}
\end{equation*}
By simple induction, we have 
\[
\begin{aligned}
    V^{n}_{i+1} &= X^{n}_{i} + C^{n}_{i} X^{n}_{i-1} + \cdots + C^{n}_{i} \cdots C^{n}_{1} X^{n}_{0} + C^{n}_{i} \cdots C^{n}_{1} C^{n}_{0} W^{n}_{0} ,
    \\
    U^{n}_{i+1} &= X^{n}_{i} + C^{n}_{i} X^{n}_{i-1} + \cdots + C^{n}_{i} \cdots C^{n}_{1} X^{n}_{0}   . 
\end{aligned}
\]
Take mutually independent, i.i.d.\ sequences \( \{ \Theta'_i,\ i \ge 0 \} \) and \( \{ X'^{,n}_{i},\ i \ge 0 \} \) that have the same distributions as \( \{ \Theta_i, \ i \ge 0 \} \) and \( \{ X^{n}_{i},\ i\ge 0 \} \) respectively. Similar to the definition of \( C^n_i\) in Assumption \ref{assump:model}, we let \(C'^{,n}_i \equiv 1 - n^{-1} \Theta'_i\).  Using these random variables, we define for \( i \in \mathbb{N}_0 \), 
\[
\begin{aligned}
    V'^{,n}_{i+1}  &\equiv X'^{,n}_{0} + C'^{,n}_{0} X'^{,n}_{1} + \cdots + C'^{,n}_{0} \cdots C'^{,n}_{i-1} X'^{,n}_{i} + C'^{,n}_{0} \cdots C'^{,n}_{i-1} C'^{,n}_{i} W^{n}_{0},
    \\
    U'^{,n}_{i+1}  &\equiv X'^{,n}_{0} + C'^{,n}_{0} X'^{,n}_{1} + \cdots + C'^{,n}_{0} \cdots C'^{,n}_{i-1} X'^{,n}_{i}, 
    \\
    \Upsilon'^{,n}_{i+1} &\equiv \max \{  0,\ X'^{,n}_{0},\ X'^{,n}_{0} + C'^{,n}_{0} X'^{,n}_{1},\ \ldots, \  X'^{,n}_{0} + C'^{,n}_{0} X'^{,n}_{1} + \cdots + C'^{,n}_{0} \cdots C'^{,n}_{i-2} X'^{,n}_{i-1}, \\
    & \quad \quad  X'^{,n}_{0} + C'^{,n}_{0} X'^{,n}_{1} + \cdots + C'^{,n}_{0} \cdots C'^{,n}_{i-2} X'^{,n}_{i-1} + C'^{,n}_{0} \cdots C'^{,n}_{i-1} X'^{,n}_{i} + C'^{,n}_{0} \cdots C'^{,n}_{i} W^{n}_{0} \},
\end{aligned} 
\]
with \( V'^{,n}_{0} = W^{n}_{0} \), \( U'^{,n}_{0} = 0 \) and \( \Upsilon'^{,n}_{0} = W^{n}_{0}\).

Observe that for any \( i \in \mathbb{N}_0 \), 
\begin{equation*}
    \Upsilon'^{,n}_{i} = \max \{ U'^{,n}_{0}, U'^{,n}_{1}, \ldots , U'^{,n}_{i-1}, V'^{,n}_{i} \} \le  V'^{,n}_{i} \vee \max_{0 \le k \le i} U'^{,n}_{k}. 
\end{equation*}
Combining these observations, we have 
\begin{equation}\label{eq:Wn_bounds}
    0 \le  W^{n}_i \le \Upsilon^{n}_{i} 
    \stackrel{(d)}{=} 
    \Upsilon'^{,n}_{i} 
    \le 
      V'^{,n}_{i} \vee \max_{0 \le k \le i} U'^{,n}_{k}  
    \stackrel{(d)}{=} 
     V^{n}_{i} \vee \max_{0 \le k \le i}U^{n}_{k}. 
\end{equation}
where $\stackrel{(d)}{=}$ is used to denote equality in distribution. 

\begin{lemma} \label{lem:W_bar_exp_stocastic_bdd}
  Under Assumptions \ref{assump:model} and \ref{assump:MDP_W}, 
    \begin{equation*}
        \lim_{K \to \infty} \limsup_{ n \to \infty} \frac{1}{b_n^{2}} \log \mathbb{P} \left( \| \bar{W}^{n} \|_T > K   \right) < - \infty. 
    \end{equation*}
\end{lemma}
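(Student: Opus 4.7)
The plan is to exploit the pathwise pointwise domination $W^n_i \le \Upsilon^n_i$ established in \eqref{eq:Wn_bounds}, and then to construct a pathwise upper bound on $\Upsilon^n_i$ that is monotone nondecreasing in $i$, so that the argument of Lemma \ref{lem:exp_bounded_Vbar} carries over essentially verbatim. The reason this refinement is needed is that attempting to use only the fixed-index distributional dominance $W^n_i \stackrel{(d)}{\le} V^n_i \vee \max_{k \le i} U^n_k$ from \eqref{eq:Wn_bounds} in combination with a union bound over $i \le \lfloor nT\rfloor$ would pick up an extra factor $\log n / b_n^{2}$ that need not vanish under the minimal MDP scaling $b_n \to \infty,\ b_n/\sqrt{n}\to 0$.

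First, the estimate $|C^n_j| \le 1 + |\Theta_j|/n \le \exp(|\Theta_j|/n)$ used in the proof of Lemma \ref{lem:exp_bounded_Vbar} bounds every partial product of $C^n_j$'s appearing in the definition of $\Upsilon^n_i$ by $\exp\bigl(n^{-1}\sum_{j=0}^{i-1}|\Theta_j|\bigr)$. Applying this termwise in the max defining $\Upsilon^n_i$, using $W^n_0 \ge 0$, yields the pathwise inequality
\[
\Upsilon^n_i \le \exp\Bigl( \tfrac{1}{n} \sum_{j=0}^{i-1} |\Theta_j| \Bigr) \Bigl( W^n_0 + \sum_{k=0}^{i-1} |X^n_k| \Bigr),
\]
and the right-hand side is monotonically nondecreasing in $i$. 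Fluid-scaling therefore gives
\[
\| \bar{W}^n \|_T \le \exp\Bigl( \tfrac{1}{n} \sum_{j=0}^{\lfloor nT\rfloor-1} |\Theta_j| \Bigr) \Bigl( \bar{W}^n_0 + \tfrac{1}{n} \sum_{k=0}^{\lfloor nT\rfloor-1} |X^n_k| \Bigr).
\]

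Set $\theta' := \mathbb{E}|\Theta_0|$ and $\mu'_n := \mathbb{E}|X^n_0|$. By Assumption \ref{assump:MDP_W}(ii) these are finite, $\{\mu'_n\}$ is bounded (since $(\mu'_n)^2 \le \sigma^{2}_{X,n}+\mu_n^{2}$), and the random walks built from $|\Theta_j|$ and $|X^n_k|$ satisfy the MDP at rate $b_n^{2}$. Exactly as in the proof of Lemma \ref{lem:exp_bounded_Vbar}, Theorem \ref{thm:MDP_RW} combined with Lemma 4.2(b) of \cite{PuhW97} then yields
\[
\tfrac{1}{n} \sum_{j=0}^{\lfloor n\cdot \rfloor - 1} |\Theta_j| - \theta' \mathfrak{e} \stackrel{p^{1/b_n^{2}}}{\longrightarrow} 0, \qquad \tfrac{1}{n} \sum_{k=0}^{\lfloor n\cdot \rfloor - 1} |X^n_k| - \mu'_n \mathfrak{e} \stackrel{p^{1/b_n^{2}}}{\longrightarrow} 0,
\]
while $\bar{W}^n_0 \stackrel{p^{1/b_n^{2}}}{\longrightarrow} \bar{W}^*$ follows from Assumption \ref{assump:MDP_W}(i) together with $b_n/\sqrt{n}\to 0$. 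Continuity of the supremum functional then shows that the right side of the above bound is exponentially equivalent at rate $b_n^{2}$ to the deterministic constant $M := e^{\theta' T}\bigl(\bar{W}^* + (\sup_n \mu'_n) T\bigr) < \infty$. Consequently, for every $K > M$,
\[
\limsup_{n \to \infty} \frac{1}{b_n^{2}} \log \mathbb{P}\bigl( \| \bar{W}^n \|_T > K \bigr) = -\infty,
\]
and letting $K \to \infty$ completes the proof.

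The main technical obstacle is the first step: in \eqref{eq:Wn_bounds} only the primed chain $\Upsilon'^{,n}$ is monotone in $i$ and only the fixed-index identity $\Upsilon^n_i \stackrel{(d)}{=} \Upsilon'^{,n}_i$ is available, so the monotonicity cannot be transported directly from $\Upsilon'^{,n}$ to $\sup_i \Upsilon^n_i$, and a purely distributional argument seems to lose by a logarithmic factor in $n$. The explicit exponential majorant derived above sidesteps this difficulty and reduces the remaining analysis to the template already developed for Lemma \ref{lem:exp_bounded_Vbar}.
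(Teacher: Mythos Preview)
Your argument is correct. The paper proceeds differently: rather than building a monotone pathwise majorant for $\Upsilon^n_i$, it invokes Lemma~\ref{lem:exp_bounded_Vbar} separately for the two unreflected chains $\bar V^n$ (started at $\bar W^n_0$) and $\bar U^n$ (started at $0$), and then uses the primed copies in \eqref{eq:Wn_bounds} to pass from $\|\bar W^n\|_T$ to $\|\bar V^n\|_T\vee\|\bar U^n\|_T$, via
\[
\mathbb P(\|\bar W^n\|_T>K)\le \mathbb P(\|\bar\Upsilon^n\|_T>K)=\mathbb P(\|\bar\Upsilon'^{,n}\|_T>K)\le \mathbb P(\|\bar V^n\|_T>K)+\mathbb P(\|\bar U^n\|_T>K).
\]
Your route is more direct: you notice that the exponential majorant from \eqref{eq:exp_tightness_V_bar} already dominates every term in the maximum defining $\Upsilon^n_i$, so the bound is automatically monotone in $i$ and the supremum over $i\le \lfloor nT\rfloor$ is harmless. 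This bypasses the primed construction entirely and, as you point out, also avoids the process-level identity $\|\bar\Upsilon^n\|_T\stackrel{(d)}{=}\|\bar\Upsilon'^{,n}\|_T$ that the paper uses without comment (the marginal equalities $\Upsilon^n_i\stackrel{(d)}{=}\Upsilon'^{,n}_i$ arise from index reversals that depend on $i$, so the process-level statement is not automatic). The paper's approach has the virtue of reusing Lemma~\ref{lem:exp_bounded_Vbar} as a black box; yours is self-contained and arguably cleaner.
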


\begin{proof}
    Following our convention for fluid scalings, denote \( \bar{V}^{n} = n^{-1} V^{n} \), \( \bar{V}'^{,n} = n^{-1} V'^{,n} \), \( \bar{U}^{n} = n^{-1} U^{n} \), \( \bar{U}'^{,n} = n^{-1} U'^{,n} \), \( \bar{\Upsilon}^{n} = n^{-1} \Upsilon^{n} \), \( \bar{\Upsilon}'^{,n} = n^{-1} \Upsilon'^{,n} \), \( \bar{\Lambda}^{n} = n^{-1} \Lambda^{n} \), \( \bar{\Lambda}'^{,n} = n^{-1} \Lambda'^{,n} \). By Lemma \ref{lem:exp_bounded_Vbar}, we have 
    \[
    \begin{aligned}
        \lim_{K \to \infty} \limsup_{n \to \infty} \frac{1}{b_n^{2}} \log \mathbb{P} \left( \| \bar{V}^{n} \|_T > K \right) = - \infty, \\
        \lim_{K \to \infty} \limsup_{n \to \infty} \frac{1}{b_n^{2}} \log \mathbb{P} \left( \| \bar{U}^{n} \|_T > K \right) = - \infty. 
    \end{aligned}
    \]
    Observe that for any process \( \phi \) in \( \mathcal{D}_T \), denoting \( \phi_{\uparrow}(t) = \sup_{0 \le u \le t} \phi(u) \), we have 
    \begin{equation*}
        \| \phi_{\uparrow} \|_T = \sup_{t \in [0,T]} | \sup_{u \in [0,t]} \phi(u) | \le \sup_{t \in [0,T]}  \sup_{u \in [0,t]} |\phi(u)|  \le  \| \phi \|_T  \,.
    \end{equation*}
    Then \eqref{eq:Wn_bounds} implies that 
    \begin{align*}
        \mathbb{P} \left( \| \bar{W}^{n} \|_{T} > K_{\alpha} \right)
        &\le 
         \mathbb{P} \left( \| \bar{\Upsilon}^{n} \|_T   > K_{\alpha} \right) 
        \\
        &=  \mathbb{P} \left(  \| \bar{\Upsilon}'^{,n} \|_T   > K_{\alpha} \right)
        \\
        &\le \mathbb{P} \left( \| \bar{V}'^{,n}  \|_T \vee \| \bar{U}'^{,n}_{\uparrow} \|_T  > K_{\alpha} \right)  
        \\
        &\le \mathbb{P} \left( \| \bar{V}'^{,n} \|_T \vee \| \bar{U}'^{,n} \|_T  > K_{\alpha} \right)  
        \\
        &\le \mathbb{P} \left( \| \bar{V}^{n} \|_T > K_{\alpha}  \right) + \mathbb{P} \left( \| \bar{U}^{n} \|_T > K_{\alpha}  \right). 
    \end{align*}
    Then the lemma follows from Remark \ref{rmk:analysis_facts}.
\end{proof}

The following lemma is a consequence of Lemma \ref{lem:W_bar_exp_stocastic_bdd}, using the same arguments that were used to show Lemma \ref{lem:exp_tight_esp_V_2} and \eqref{eq:lem_V_esp_1}. 
\begin{lemma}
    \label{lem:eps_tilde_123}
   Under Assumptions \ref{assump:model} and \ref{assump:MDP_W}, with rate \( b_n^{2} \), the family \(\{ \widetilde{\epsilon}^{2,n},\; n \in \mathbb{N} \} \) is exponentially tight in $\mathcal{D}_T$, and the families \( \{\widetilde{\epsilon}^{1,n},\; n \in \mathbb{N}\} \), \( \{\widetilde{\epsilon}^{3,n},\; n \in \mathbb{N} \} \) are exponentially equivalent to the zero process. 
\end{lemma}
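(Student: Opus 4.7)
The plan is to mirror exactly the arguments developed for the linearly recursive Markov system in Section \ref{sec:MDP_LRMS}, substituting $\bar{W}^n$ for $\bar{V}^n$ throughout, with the key replacement being Lemma \ref{lem:W_bar_exp_stocastic_bdd} in place of Lemma \ref{lem:exp_bounded_Vbar}. Since the structural form of the error terms $\widetilde{\epsilon}^{1,n}, \widetilde{\epsilon}^{2,n}, \widetilde{\epsilon}^{3,n}$ is identical (in notation) to their $V$-counterparts, once we have super-exponential boundedness of $\bar{W}^n$ at rate $b_n^2$, the arguments transfer verbatim.

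First I would handle the easy pieces $\widetilde{\epsilon}^{1,n}$ and $\widetilde{\epsilon}^{3,n}$. For the former, I would use the pointwise bound
\[
   \|\widetilde{\epsilon}^{1,n}\|_T \le \frac{|\theta|}{n}\|\widetilde{W}^n\|_T = \frac{|\theta|}{b_n\sqrt{n}}\|\bar{W}^n - \bar{W}^*\|_T,
\]
which combined with Lemma \ref{lem:W_bar_exp_stocastic_bdd} and $b_n\sqrt{n} \to \infty$ gives, for arbitrary $\epsilon>0$ and $\alpha>0$, a bound $\mathbb{P}(\|\widetilde{\epsilon}^{1,n}\|_T>\epsilon) \le \mathbb{P}(\|\bar{W}^n-\bar{W}^*\|_T > K_\alpha)$ for $n$ large. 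Sending $\alpha \to \infty$ and invoking Lemma \ref{lem:supexp_char} yields super-exponential convergence to $0$. For $\widetilde{\epsilon}^{3,n}$, since $\mu_n\to\mu$ is bounded and $b_n\sqrt{n}\to\infty$, its sup-norm is deterministic and $O((b_n\sqrt{n})^{-1})$, so it converges super-exponentially to $0$ trivially.

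The main step is exponential tightness of $\widetilde{\epsilon}^{2,n}$ in $\mathcal{D}_T$. I would verify the two conditions of Theorem \ref{thm:C_exp_tightness_in_D} in exact parallel with the proof of Lemma \ref{lem:exp_tight_esp_V_2}. Fix $\alpha>0$; by Lemma \ref{lem:W_bar_exp_stocastic_bdd} and Theorem \ref{thm:exp_tightness_D} choose $K_\alpha$ with
\[
\limsup_{n\to\infty}\frac{1}{b_n^2}\log\mathbb{P}(\|\bar{W}^n-\bar{W}^*\|_T > K_\alpha) < -\alpha,
\]
and on the event $\Gamma^n = \{\|\bar{W}^n-\bar{W}^*\|_T \le K_\alpha\}$ introduce the $\{\mathcal{F}^n_k\}$-martingale
\[
   Z^n_k = \sum_{i=0}^k (\theta-\Theta_i)(\bar{W}^n_i - \bar{W}^*)1_{\{|\bar{W}^n_i - \bar{W}^*|\le K_\alpha\}}.
\]
The cumulant expansion in \eqref{eq:lem_V_eps2_exp_tight_supermtg_check} goes through unchanged because it uses only the exponential moment assumption on $\Theta_0$ (Assumption \ref{assump:MDP_W} (ii)) and the cutoff $K_\alpha$; it produces the same supermartingale $\exp\{(b_n/\sqrt{n})\rho Z^n_k - (b_n^2/n)\rho^2 K_\alpha^2\sigma_\Theta^2 k\}$. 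The Markov-inequality bound for finite-dimensional exponential tightness and the Doob-inequality bound for the oscillation condition then yield the two conditions of Theorem \ref{thm:C_exp_tightness_in_D} with the same constants as in Lemma \ref{lem:exp_tight_esp_V_2}.

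The hard part, which has already been done for us by Lemma \ref{lem:W_bar_exp_stocastic_bdd}, was obtaining the exponential stochastic boundedness of $\bar{W}^n$ despite the reflection at zero; the martingale/supermartingale machinery then transfers mechanically. No new ideas are needed once that comparison bound is in hand, which is why the paper presents this lemma as a consequence.
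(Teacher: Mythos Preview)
Your proposal is correct and follows exactly the route the paper takes: the paper states that Lemma \ref{lem:eps_tilde_123} is a consequence of Lemma \ref{lem:W_bar_exp_stocastic_bdd} using the same arguments as in Lemma \ref{lem:exp_tight_esp_V_2} and \eqref{eq:lem_V_esp_1}, which is precisely what you outline. Your identification of Lemma \ref{lem:W_bar_exp_stocastic_bdd} as the only new input, with the martingale/supermartingale estimates transferring verbatim, matches the paper's reasoning.
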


\subsection{Proof of Theorem \ref{thm:MDP_W} under non-zero centering}

The goal of this section is to show Theorem \ref{thm:MDP_W} (i), which is the case where \( \mu > 0 \), \( \theta > 0 \) and \( \bar{W}^* = \mu/\theta \). Recalling \eqref{eq:W_fluid_representation}, 
let \( \xi \in \mathcal{D}_T \) be defined by 
\begin{equation} \label{eq:xi_def}
    \xi^{n}(t) = \bar{W}^{n}_{0} + \frac{1}{n} \sum_{i=0}^{\left\lfloor nt \right\rfloor -1} X^{n}_{i} + \bar{\epsilon}^{1,n}(t) +  \bar{\epsilon}^{2,n}(t), \quad t \in [0,T]. 
\end{equation}
 
We analyze each term in \eqref{eq:xi_def}. 
First, Assumption \ref{assump:MDP_W} (i), Theorem \ref{thm:MDP_RW} and Lemma 4.2(b) in \cite{PuhW97} imply that
\begin{equation} \label{eq:xi_1}
    \bar{W}^{n}_{0} \stackrel{P ^{1/b_n^{2}}}{\longrightarrow} \bar{W}^* \quad \textrm{and} \quad \frac{1}{n} \sum_{i=0}^{\left\lfloor n \cdot \right\rfloor-1} X^{n}_{i} - \mu_n \mathfrak{e} \stackrel{P ^{1/b_n^{2}}}{\longrightarrow} 0 . 
\end{equation}
With some algebra, we obtain 
\begin{equation*}
    \widetilde{\epsilon}^{1,n}(t)  = \theta \left( \int_{0}^{t} \widetilde{W}^{n}(s) \mathrm{d}s - \frac{1}{n} \sum_{i=0}^{\left\lfloor nt \right\rfloor -1 } \widetilde{W}^{n}_{i}  \right)  = \frac{\sqrt{n}}{b_n} \bar{\epsilon}^{1,n}(t) +  \mu \frac{nt - \left\lfloor nt \right\rfloor}{b_n \sqrt{n}}  .  
\end{equation*}
In the relation above, \( \{ \widetilde{\epsilon}^{1,n} \}_{n \in \mathbb{N}}\) is exponentially equivalent to \( 0 \) with rate \(b_n^2\) by Lemma \ref{lem:eps_tilde_123}. The deterministic process \( (b_n \sqrt{n})^{-1}(n \mathfrak{e} - \left\lfloor n \mathfrak{e} \right\rfloor)\mu \) uniformly converges to \( 0 \), hence it is also exponentially equivalent to \( 0 \) with rate \(b_n^2\). Then since \( \sqrt{n}/b_n \to \infty \), Lemma 4.2 (b) in \cite{PuhW97} implies 
\begin{equation} \label{eq:xi_2}
    \bar{\epsilon}^{1,n} \stackrel{P ^{1/b_n^{2}}}{\longrightarrow} 0. 
\end{equation} 
Next, with some algebra we can also obtain  
\begin{equation*}
    \widetilde{\epsilon}^{2,n}(t) = \frac{1}{b_n \sqrt{n}}  \sum_{i=0}^{\left\lfloor nt \right\rfloor-1} (\theta - \Theta_{i})  ( \bar{W}^{n}_{i} - \bar{W}^{*}) = \frac{\sqrt{n}}{b_n} \bar{\epsilon}^{2,n}(t) +  \bar{W}^{*} \cdot \widetilde{R}^n_\Theta(t).  
\end{equation*}
Lemma \ref{lem:eps_tilde_123} and Theorem \ref{thm:MDP_RW} imply that \( \{b_n^{-1}\sqrt{n} \bar{\epsilon}^{2,n} \}_{n \in \mathbb{N}}\) is exponentially tight in \(\mathcal{D}_T\) with rate \( b_n^{2} \). By Lemma \ref{lem:exptight_fluid0}, we have 
\begin{equation} \label{eq:xi_3}
    \bar{\epsilon}^{2,n} \stackrel{P ^{1/b_n^{2}}}{\longrightarrow} 0. 
\end{equation}
Finally, combining \eqref{eq:xi_def} with \eqref{eq:xi_1}, \eqref{eq:xi_2} and \eqref{eq:xi_3} yields  
\begin{equation}\label{lem:xi_expequiv}
    \xi^{n} - (\bar{W}^* + \mu_{n}\mathfrak{e}) \stackrel{P ^{1/b_n^{2}}}{\longrightarrow} 0. 
\end{equation}

Next, we claim that 
\begin{equation} \label{eq:Ltilde_expeq_0}
    \frac{\sqrt{n}}{b_n} \bar{L}^{n} \stackrel{P ^{1/b_n^{2}}}{\longrightarrow} 0.  
\end{equation} 
To see this, consider the event \( \Gamma^{n} := \{ \| \xi^{n} - (\bar{W}^{*} + \mu_{n} \mathfrak{e}) \|_T \le  \bar{W}^* \} \). Since \( \mu_n \to  \mu > 0 \), we have \( \mu_n > 0 \) when $n$ is large enough. 
Hence on \( \Gamma^{n} \) with $n$ large, \( \xi^{n} (t) > 0 \) for all \( t > 0 \). By \eqref{eq:fluid_W_reflected}, we can write \( \bar{L}^{n} = \mathcal{R}'_{\theta}(\xi^{n}) \). This implies \(\xi^n  \mathrm{d} \bar{L}^n = 0\) and therefore \( \bar{L}^{n} \equiv 0 \). Therefore, for any \( \epsilon > 0 \) and \( n \) large enough, 
\begin{align*}
    \mathbb{P} \left( \| \frac{\sqrt{n}}{b_n}  \bar{L}^{n} \| > \epsilon \right) 
    &\le \mathbb{P} \left( \| \frac{\sqrt{n}}{b_n}  \bar{L}^{n} \|_T > \epsilon ,\  \Gamma^{n} \right) + \mathbb{P} \left(\| \xi^{n} - (\bar{W}^{*} + \mu_{n} \mathfrak{e}) \|_T > \bar{W}^{*}  \right) 
    \\
    &= \mathbb{P} \left(\| \xi^{n} - (\bar{W}^{*} + \mu_{n} \mathfrak{e}) \|_T > \bar{W}^{*}  \right) . 
\end{align*}
Then by \eqref{lem:xi_expequiv} and Lemma \ref{lem:supexp_char}, we have 
\begin{equation*}
    \limsup_{n \to \infty} \frac{1}{b_n^{2}} \log \mathbb{P}  \left( \| \frac{\sqrt{n}}{b_n}  \bar{L} \| > \epsilon \right) 
    \le 
    \limsup_{n \to \infty} \frac{1}{b_n^{2}} \log \mathbb{P} \left(\| \xi^{n} - (\bar{W}^{*} + \mu_{n} \mathfrak{e}) \|_T > \bar{W}^{*}  \right) = - \infty, 
\end{equation*}
which implies \eqref{eq:Ltilde_expeq_0}. 

Finally, we write \eqref{eq:md_representation} as 
\begin{equation*}
    \widetilde{W}^{n}
    = \mathcal{M}_\theta \left(  \widetilde{W}^{n}_0 +  \widetilde{R}^n_X -   \bar{W}^{*} \widetilde{R}^n_\Theta    + \frac{\sqrt{n}}{b_n}(\mu_n - \mu)\mathfrak{e}  + \widetilde{\epsilon}^{1,n} + \widetilde{\epsilon}^{2,n} + \widetilde{\epsilon}^{3,n} +  \frac{\sqrt{n}}{b_n} \bar{L}^{n}  \right). 
\end{equation*}
By Assumption \ref{assump:MDP_W}, Theorem \ref{thm:MDP_RW}, Lemma \ref{lem:eps_tilde_123} and \eqref{eq:Ltilde_expeq_0}, we first apply Lemma \ref{lem:exp_tightness_cts_map} and conclude that \(\{ \widetilde{W}^{n}\}_{n \in \mathbb{N}} \) is exponentially tight in $\mathcal{D}_T$. By Lemma \ref{lem:exptight_fluid0}, this implies 
\begin{equation*}
     \bar{W}^{n} - \bar{W}^* \stackrel{P^{1/b_n^{2}}}{\longrightarrow}  0. 
\end{equation*}
Then by the arguments used in the proof of Lemma \ref{lem:eps_V_expeq_0}, we can show 
\begin{equation} \label{eq:eps_tilde_2_expeq_0}
    \widetilde{\epsilon}^{2,n} \stackrel{P^{1/b_n^{2}}}{\longrightarrow}  0.
\end{equation}
Since the map \( \mathcal{M}_\theta \) is continuous, once again using Assumption \ref{assump:MDP_W}, Theorem \ref{thm:MDP_RW}, Lemma \ref{lem:eps_tilde_123} and \eqref{eq:Ltilde_expeq_0}, along with \eqref{eq:eps_tilde_2_expeq_0}, we apply the contraction principle and obtain the MDP result given by Theorem \ref{thm:MDP_W} (i).

\subsection{Proof of Theorem \ref{thm:MDP_W} under zero centering}
Now we turn to the cases where the centering term \( \bar{W}^* = 0 \). By Table \ref{table:fluid_stability}, this occurs when \( \mu \le 0 \). 

We first consider the case where \( \mu = 0 \), which corresponds to Theorem \ref{thm:MDP_W} (ii). 
The relation \eqref{eq:md_representation} and the same arguments used to derive \eqref{eq:fluid_W_reflected} imply that 
\begin{equation} 
    \label{eq:W_tilde_rep_mu_0}
    \widetilde{W}^{n} 
    = \mathcal{R}_\theta \left( \widetilde{\Phi}^{n} \right) ,
\end{equation}
where we let  
\[
    \widetilde{\Phi}^{n} := \widetilde{W}^{n}_0 +  \widetilde{R}^n_X  -   \bar{W}^{*} \widetilde{R}^n_\Theta    + \frac{\sqrt{n}}{b_n}(\mu_n - \mu)\mathfrak{e}    + \widetilde{\epsilon}^{1,n}  + \widetilde{\epsilon}^{2,n}  + \widetilde{\epsilon}^{3,n},  
\]
and use \( \mathcal{R}_{0} \) to denote the conventional Skorokhod reflection mapping \( \mathcal{R} \) when \( \theta = 0 \). 

By Assumption \ref{assump:MDP_W}, Theorem \ref{thm:MDP_RW}, Lemma \ref{lem:eps_tilde_123}, continuity of the mapping \( \mathcal{R}_\theta \) and Lemma \ref{lem:exp_tightness_cts_map}, we conclude that \(\{\widetilde{W}^{n} \}_{n \in \mathbb{N}}\) is exponentially tight with rate \( b_n^{2} \). We can once again use the arguments in Section \ref{subsec:MDP_V_proof}: first concluding \( \bar{W}^{n} \stackrel{P^{1/b_n^{2}}}{\longrightarrow} 0 \), then \( \widetilde{\epsilon}^{2,n} \stackrel{P^{1/b_n^{2}}}{\longrightarrow} 0 \), and finally applying the contraction principle to obtain the MDP result given in Theorem \ref{thm:MDP_W} (ii). 

Now let \( \mu < 0 \), instead of \eqref{eq:W_tilde_rep_mu_0}, we have 
\begin{equation} 
    \label{eq:W_tilde_rep_mu_ll_0}
    \widetilde{W}^{n} 
    =    \mathcal{R}_\theta \left(  \widetilde{\Phi}^{n} +  \frac{\sqrt{n}}{b_n} \mu \mathfrak{e}   \right). 
\end{equation}
Since \( \widetilde{W}^{n}(t) = \frac{\sqrt{n}}{b_n} \bar{W}^{n}(t) \), we can write 
\begin{equation}
    \label{eq:W_bar_rep_mu_ll_0}
    \bar{W}^{n}(t) = \frac{b_n}{\sqrt{n}} \widetilde{\Phi}^{n} + \mu t - \int_{0}^{t} \theta \bar{W}^{n} \mathrm{d}s + \bar{L}^{n}(t)   = \mathcal{R}_\theta \left( \frac{b_n}{\sqrt{n}} \widetilde{\Phi}^{n}(t) + \mu t  \right). 
\end{equation}
Similar to the above, we can use Assumption \ref{assump:MDP_W}, Theorem \ref{thm:MDP_RW}, Lemma \ref{lem:eps_tilde_123} to conclude \( \{\widetilde{\Phi}^{n} \}_{n \in \mathbb{N}}\) is exponentially tight in \(\mathcal{D}_T\) with rate \( b_n^{2} \). We note that \( \{\widetilde{\Phi}^{n} \}_{n \in \mathbb{N}}\) is in fact \( \mathcal{C} \)-exponentially tight since in the proof of Lemma \ref{lem:exp_tight_esp_V_2}, we checked the conditions for \( \mathcal{C} \)-exponential tightness in Theorem \ref{thm:C_exp_tightness_in_D}. Lemma \ref{lem:exptight_fluid0} then implies that 
\begin{equation*}
    \frac{b_n}{\sqrt{n}} \widetilde{\Phi}^{n}  \stackrel{P ^{1/b_n^{2}}}{\longrightarrow} 0 .  
\end{equation*}
By \eqref{eq:W_bar_rep_mu_ll_0}, the contraction principle and the fact that \( \mathcal{R}_{\theta}(\mu \mathfrak{e}) \equiv 0 \), we have 
\begin{equation}\label{eq:mdp_W_pf_fluid_0}
    \bar{W}^{n} \stackrel{P ^{1/b_n^{2}}}{\longrightarrow} 0 .  
\end{equation} 
Let \( \epsilon > 0 \) such that \( \mu + |\theta| \epsilon < 0 \). On the event \( \Lambda^{n} = \{ \| \bar{W}^{n} \|_T < \epsilon \} \), observe that  \( 0 \le  \widetilde{W}^{n} < b_n^{-1} \sqrt{n}\epsilon  \), and therefore 
\begin{equation*}
     \widetilde{\Phi}^{n}(t) + \frac{\sqrt{n}}{b_n} \mu t + \int_{0}^{t} (-\theta) \widetilde{W}^{n}(s) \mathrm{d}s  
    \le   \widetilde{\Phi}^{n}(t) + \frac{\sqrt{n}}{b_n} \mu t + \int_{0}^{t} \vert \theta \vert \frac{\sqrt{n}}{b_n} \epsilon \  \mathrm{d} s = \widetilde{\Phi}^{n}(t) + \frac{\sqrt{n}}{b_n} (\mu + \vert \theta \vert \epsilon ) t .    
\end{equation*}
Since \( |\theta| b_n^{-1} \sqrt{n} \epsilon +  \theta \widetilde{W}^{n}(t) \ge 0 \)  on the event \( \Lambda^{n} \), we use \eqref{eq:lgrm} and Lemma \ref{lem:Skorokhod_comparison} to get 
\begin{align*}
    0 \le  \widetilde{W}^{n} = \mathcal{R}_\theta \left( \widetilde{\Phi}^{n} + \frac{\sqrt{n}}{b_n} \mu \mathfrak{e} \right)  
    &= \mathcal{R} \left( \widetilde{\Phi}^{n} + \frac{\sqrt{n}}{b_n} \mu \mathfrak{e} + \int_{0}^{t} (-\theta) \widetilde{W}^{n}(s) \mathrm{d}s \right) \\
    & \le \mathcal{R} \left( \widetilde{\Phi}^{n} + \frac{\sqrt{n}}{b_n} (\mu + \vert \theta \vert \epsilon) \mathfrak{e}  \right). 
\end{align*}
Let \( \delta > 0 \) be arbitrary. We have 
\begin{align*}
    \mathbb{P} \left( \| \widetilde{W}^{n} \|_T > \delta \right)
    & \le \mathbb{P} \left(\| \widetilde{W}^{n} \|_T > \delta,\ \Lambda^{n}  \right) + \mathbb{P} \left( \| \bar{W}^{n} \|_T > \epsilon \right) \\
    & \le \mathbb{P} \left(\left\|  \mathcal{R} \left( \widetilde{\Phi}^{n} + \frac{\sqrt{n}}{b_n} (\mu + \vert \theta \vert \epsilon) \mathfrak{e}  \right) \right\|_T > \delta \right) + \mathbb{P} \left( \| \bar{W}^{n} \|_T > \epsilon \right). 
\end{align*}
By Lemma \ref{lem:reflection_expequiv_0}, \eqref{eq:mdp_W_pf_fluid_0}, Lemma \ref{lem:supexp_char} and Remark \ref{rmk:analysis_facts}, we obtain  
\begin{equation*}
    \limsup_{ n \to \infty} \frac{1}{b_n^{2}} \log \mathbb{P} \left( \| \widetilde{W}^{n} \|_T > \delta \right) = - \infty.  
\end{equation*}
This concludes the proof for the last statement in Theorem \ref{thm:MDP_W} where \( \mu < 0 \).

\subsection{Proof of Theorem \ref{thm:MDP_W_explicit_rate_fn}} 

The proof here mirrors that of Theorem \ref{thm:MDP_V_explicit_rate_fn}. Case (i) is exactly the same. For cases (ii) and (iii), it suffices to optimize over the set \( \{  \psi_1  \subseteq   \mathcal{D}_T:  \psi_1 \in \mathcal{AC}_0 \} \), with the rate function being infinite everywhere else. Let \( \phi \in \mathcal{D}_T \) be given such that \( \phi = \mathcal{R}_\theta(w_0 + \psi_1+ r \mathfrak{e}) \). Then \(\phi \) is non-negative with \(\phi(0) = w_0\). By Lemma A.1 in \cite{FHP25}, we further have \( \phi  \in \mathcal{AC}\) and there exists a \( y \in \mathcal{AC}_0 \) such that \( \dot{\phi} + \theta \phi = \dot{\psi}_1 + r + \dot{y} \),  \( \dot{y}(t) \ge 0 \) and \( \phi(t) \dot{y}(t) = 0 \) a.e.  
Then the problem reduces to solving the following convex optimization problem a.e. in time \( t \):   
\begin{equation*}
\begin{aligned}
    \min_{\dot{\psi}_{1}(t) \in \mathbb{R}} \quad & \frac{1}{2 \sigma^{2}_{X}} \dot{\psi}_{1}(t)^{2}  \\
    \textrm{s.t.}\quad & \dot{\phi}(t) = \dot{\psi}_{1}(t) +r - \theta \phi (t) + \dot{y}(t). 
\end{aligned}
\end{equation*}
On the event \( \{t: \phi(t) > 0 \} \), we have \( \dot{y}(t) = 0 \) a.e., and then the solution is the same as case (i) with \( \mu = 0 \). On \( \{t:\phi(t) = 0 \}\), again by \cite{FHP25} Lemma A.1, we also have \( \dot{\phi} = 0 \) a.e. So \( \dot{\psi}_1(t) = - (r+ \dot{y} (t)) \) and the problem is equivalent to solving 
\begin{equation*}
\begin{aligned}
    \min_{ \dot{y}(t) \in \mathbb{R}} \quad & \frac{1}{2 \sigma^{2}_{X}} (r  + \dot{y}(t))^{2} \\
    \textrm{s.t.}\quad &   \dot{y}(t) \ge 0. 
\end{aligned}
\end{equation*}
By standard techniques, we see that when \( r \ge 0 \), \( \dot{y} (t) = 0 \) and when \( r < 0 \), \( \dot{y}(t) = -r \). Combining the above arguments, we obtain the rate function for cases (ii) and (iii).

\clearpage

\appendix

\section{Notation Table}\label{app:notation_table}
\begin{table}[H]
\centering
\caption{Frequently used notation.}
\label{tab:notation}
\begin{tabularx}{\textwidth}{@{}l p{0.6\textwidth} X@{}}
\hline
Notation & Meaning & First appearance \\
\hline

\(W_i^n\) 
& Waiting time of the \(i\)-th customer in the \(n\)-th system 
& Section \ref{sec:intro} Eq.\eqref{eq:lindley_recursion} \\

\(W^n(t)\) 
& Re-indexed waiting-time process 
& Section \ref{subsec:the_model} Eq.\eqref{eq:process_W} \\

\(\bar{W}^n\) 
& Fluid-scaled waiting-time process
& Section \ref{subsec:MD_results} Eq.\eqref{eq:fluid_scaling_W} \\

\(\widetilde{W}^n\) 
& MD-scaled waiting-time process centered at \(\bar{W}^*\) 
& Section \ref{subsec:MD_results} Eq.\eqref{eq:W_tilde}\\

\(\widehat{W}^n\) 
& Diffusion-scaled waiting-time process centered at \(\bar{W}^*\) 
& Appendix \ref{app:diffusion_approx} Eq.\eqref{eq:fclt_scalings}\\

\(\bar{W}^*\) 
& Stable fixed point of the fluid limit of \(\bar{W}^n\) 
& Section \ref{subsec:MD_results},  Table \ref{table:fluid_stability}\\

\hline
\(L^n\) 
& Regulator process associated with the reflected waiting-time recursion 
& Section \ref{subsec:the_model} Eq.\eqref{eq:waiting_time_representation}\\

\(\bar{L}^n\) 
& Fluid-scaled regulator process 
& Section \ref{subsec:fluid_limit} Eq.\eqref{eq:fluid_W_reflected}\\

\(\widetilde{L}^n\) 
& MD-scaled regulator process 
& Section \ref{sec:MDP_W} Eq.\eqref{eq:md_representation}\\

\hline
\(V_i^n\) 
& Unreflected linearly recursive process 
& Section \ref{sec:intro} Eq.\eqref{eq:intro_markov_chain} \\

\(\bar{V}^n\) 
& Fluid-scaled unreflected process 
& Section \ref{subsec:fluid_analysis_V} Eq.\eqref{eq:V_bar_rep}\\

\(\widetilde{V}^n\) 
& MD-scaled unreflected process centered at \(\bar{V}^*\) 
& Section \ref{subsec:V_MDP_results} Eq.\eqref{eq:md_V}\\

\(\bar{V}^*\) 
& Stable fixed point of the fluid limit of \(\bar{V}^n\) 
& Section \ref{subsec:fluid_analysis_V} Table \ref{table:fluid_stability_V} \\

\hline
\(X_i^n\) 
& Nominal increment variable, \(X_i^n=\mathfrak{S}_i^n-\mathfrak{A}_i^n\) 
& Section \ref{subsec:the_model} Eq.\eqref{eq:recursion_W}\\

\(C_i^n\) 
& Random coefficient in the recursion, \(C_i^n=1+B_i^n-A_i^n\) 
& Section \ref{subsec:the_model} Eq.\eqref{eq:recursion_W}\\

\(\Theta_i\) 
& Random variable satisfying \(C_i^n=1-\Theta_i/n\) 
& Assumption \ref{assump:model} \\

\(\mu_n\), \(\mu\) 
& Mean of \(X_0^n\), and its limit \(\mu_n\to\mu\) 
& Assumption \ref{assump:model} \\

\(\theta\) 
& Mean of \(\Theta_0\) 
& Assumption \ref{assump:model} \\

\(\sigma_X^2\), \(\sigma_\Theta^2\) 
& Limiting variance parameters of \(X_0^n\) and \(\Theta_0\) 
& Assumption \ref{assump:model} \\

\(\widetilde{R}_X^n\) 
& MD-scaled centered random walk generated by \(X_i^n\) 
& Section \ref{subsec:MD_results} Eq.\eqref{eq:RW_MD_scaling}\\

\(\widetilde{R}_\Theta^n\) 
& MD-scaled centered random walk generated by \(\Theta_i\) 
& Section \ref{subsec:MD_results} Eq.\eqref{eq:RW_MD_scaling}\\

\hline 
\(\mathcal{R}\), \(\mathcal{R}'\) 
& Conventional Skorokhod reflection map and regulator map 
& Section \ref{subsec:MD_results} \\

\(\mathcal{R}_\theta\), \(\mathcal{R}'_\theta\) 
& Linearly generalized Skorokhod map and regulator map 
& Section \ref{subsec:MD_results} \\

\(\mathcal{M}_\theta\) 
& Unreflected map with linear drift term  
& Section \ref{subsec:MD_results} \\
\hline
\end{tabularx}
\end{table}

\section{Proofs for Section \ref{sec:fluid_analysis}} \label{app:fluid_proofs}

First, Lemma 1 in \cite{Whi90} provides an alternative system that can be used to bound \(\bar{W}^n\). Specifically, consider the unreflected recursion:  
\begin{equation}
    \label{eq:Y_def}
    \begin{cases}
        \bar{Y}^{n}_{i+1} = (C^{n}_{i})^+ \bar{Y}^{n}_i + \frac{1}{n} (X^{n}_i)^+ , & \quad i \ge 0, \\
        \bar{Y}_{0}^{n} = \bar{W}^{n}_{0} . 
    \end{cases}
\end{equation}
Clearly, $\bar{W}_i^{n} \le \bar{Y}_{i}^{n}$ almost surely. It is beneficial to analyze the second moment of $\bar{Y}^{n}_i$, which is the content of the next lemma. 

\begin{lemma}\label{lem:estimate_Y}
    Let Assumption \ref{assump:model} hold, $\bar{W}^n_0 \to w_0$ in $L^2$ as $n \to \infty$, and $\bar{Y}^{n}_i$ be given by the recursion \eqref{eq:Y_def}. Then,   
    \[
        \sup_{n \ge 0} \sup_{0 \le i \le \left\lfloor nT \right\rfloor} \mathbb{E} \big[ (\bar{Y}^{n}_i)^2\big]  < \infty. 
    \]
\end{lemma}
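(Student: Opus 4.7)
The plan is to unroll the linear recursion \eqref{eq:Y_def} explicitly and estimate the result in $L^2$. Iterating yields
\begin{equation*}
\bar{Y}^n_i \;=\; P^n_i\, \bar{W}^n_0 + \frac{1}{n}\sum_{k=0}^{i-1} Q^n_{k,i}\,(X^n_k)^+,
\qquad P^n_i := \prod_{j=0}^{i-1}(C^n_j)^+,
\quad Q^n_{k,i} := \prod_{j=k+1}^{i-1}(C^n_j)^+.
\end{equation*}
Under the standing independence assumptions on the model (with $W^n_0$ independent of the driving sequences, $\{\Theta_j\}$ independent of $\{X^n_j\}$), the product $P^n_i$ is independent of $\bar{W}^n_0$, and each $Q^n_{k,i}$ is independent of $X^n_k$. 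Applying the triangle inequality in $L^2(\mathbb{P})$ and using the independence of the factors $(C^n_j)^+$ across $j$ gives
\begin{equation*}
\mathbb{E}\big[(\bar{Y}^n_i)^2\big]^{1/2}
\;\le\; \mathbb{E}\big[(\bar{W}^n_0)^2\big]^{1/2}\, \alpha_n^{i/2}
+ \frac{1}{n}\,\mathbb{E}\big[(X^n_0)^2\big]^{1/2} \sum_{k=0}^{i-1} \alpha_n^{(i-k-1)/2},
\end{equation*}
where $\alpha_n := \mathbb{E}\big[((C^n_0)^+)^2\big]$.

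The crux of the argument is then to bound $\alpha_n \le 1 + C_1/n$ for some constant $C_1$ independent of $n$. Since $(C^n_0)^+ = (1-\Theta_0/n)^+ \le 1 + |\Theta_0|/n$ and $\mathbb{E}\Theta_0^2 = \sigma_\Theta^2 + \theta^2 < \infty$ by Assumption \ref{assump:model}(i), expanding the square yields
\begin{equation*}
\alpha_n \;\le\; 1 + \tfrac{2}{n}\mathbb{E}|\Theta_0| + \tfrac{1}{n^2}\mathbb{E}\Theta_0^2 \;\le\; 1 + C_1/n.
\end{equation*}
Consequently, for $i \le \lfloor nT \rfloor$, $\alpha_n^{i/2} \le (1 + C_1/n)^{nT/2} \le e^{C_1 T/2}$. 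This is the step that I would expect to be the only real obstacle, and it is precisely where the specific scaling $C^n_j = 1 - \Theta_j/n$ from Assumption \ref{assump:model}(ii) plays its role: it is what keeps an $O(n)$-fold product of factors uniformly bounded.

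Plugging back in and using $i/n \le T$, the geometric sum satisfies $\frac{1}{n}\sum_{k=0}^{i-1}\alpha_n^{(i-k-1)/2} \le T\,e^{C_1 T/2}$, whence
\begin{equation*}
\mathbb{E}\big[(\bar{Y}^n_i)^2\big]^{1/2} \;\le\; e^{C_1 T/2}\Big( \mathbb{E}\big[(\bar{W}^n_0)^2\big]^{1/2} + T\,\mathbb{E}\big[(X^n_0)^2\big]^{1/2} \Big).
\end{equation*}
The hypothesis $\bar{W}^n_0 \to w_0$ in $L^2$ gives $\sup_n \mathbb{E}[(\bar{W}^n_0)^2] < \infty$, and Assumption \ref{assump:model}(iii) together with $\mathbb{E}[(X^n_0)^2] = \sigma_{X,n}^2 + \mu_n^2 \to \sigma_X^2 + \mu^2$ gives $\sup_n \mathbb{E}[(X^n_0)^2] < \infty$. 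Taking the supremum over $i \le \lfloor nT \rfloor$ and then over $n$ yields the claim.
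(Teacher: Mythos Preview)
Your proof is correct. Both your argument and the paper's hinge on the same key estimate $\mathbb{E}[((C^n_0)^+)^2] \le 1 + C/n$, which is what makes an $O(n)$-fold product stay bounded, together with the $L^2$ boundedness of $\bar{W}^n_0$ and $X^n_0$. The organization, however, differs. The paper proceeds recursively: it first establishes a uniform bound on the \emph{first} moment $\mathbb{E}[\bar{Y}^n_i]$ via a discrete Gronwall inequality, and then uses that bound to control the cross term in a second recursive inequality for $\mathbb{E}[(\bar{Y}^n_i)^2]$, applying Gronwall once more. You instead unroll the recursion explicitly and apply the triangle inequality in $L^2$, which collapses the two steps into one and bypasses the need for Gronwall or an intermediate first-moment bound. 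Your route is a bit more streamlined for this linear recursion; the paper's Gronwall-based approach is perhaps more robust to perturbations of the recursion (e.g., if the explicit solution were unavailable).
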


\begin{proof}
    The assumptions imply that $\bar{Y}^n_0 \to w_0$ in $L^2$. Therefore, there exists some $\kappa_0 > 0$ such that 
    \begin{equation*}
        \mathbb{E}[\bar{Y}^{n}_{0}] < \kappa_0 \quad \textrm{and} \quad \mathbb{E}\big[(\bar{Y}^{n}_{0})^2\big] < \kappa_0.
    \end{equation*}
    Now denote \( \theta' :=  \mathbb{E} |\Theta_0|  \). Then, 
    \begin{equation*}
        \mathbb{E}[(C^{n}_{0})^{+}] \le \mathbb{E} |C^{n}_{0}| \le \mathbb{E} \left[ 1+ \frac{|\Theta_0|}{n} \right] = 1 + \frac{\theta'}{n}  \,. 
    \end{equation*}
    Also observe that 
    \begin{equation*}
        \left( \mathbb{E} [ (X^{n}_{0})^{+}] \right)^{2} \le \mathbb{E} [ ((X^{n}_{0})^{+})^{2}] \le  \mathbb{E} \left[ (X^{n}_{0})^{2} \right] = \sigma_{X,n}^{2} + \mu_n^{2}. 
    \end{equation*}
    By Assumption \ref{assump:model}, \( \sigma_{X,n}^{2} + \mu_{n}^{2} \) converges as \( n \to \infty \), hence there exists some \( \kappa_1 > 0 \) such that 
    \begin{equation*}
        \mathbb{E} [ (X^{n}_{0})^{+}] \le  \kappa_1 \quad \textrm{and} \quad \mathbb{E} \big[ ((X^{n}_{0})^{+})^{2} \big] \le \kappa_1. 
    \end{equation*}
    Then we have the recursive inequality
    \begin{equation*}
        \mathbb{E} [\bar{Y}^{n}_{i+1}] = \mathbb{E} [(C^{n}_{i})^{+}] \mathbb{E} [\bar{Y}^{n}_{i}] + \frac{1}{n} \mathbb{E} [ (X^{n}_{i})^{+}] \le (1+\frac{\theta'}{n} ) \mathbb{E} [\bar{Y}^{n}_{i}] + \frac{\kappa_{1}}{n} \,. 
    \end{equation*}
    Let \( \kappa_2  := e^{\theta'T} (\kappa_0 + \kappa_1 T) \). Then by Lemma \ref{lem:discrete_gronwall}, 
    \begin{equation*}
        \max_{0 \le i \le \left\lfloor nT \right\rfloor} \mathbb{E} [\bar{Y}^{n}_{i+1}] \le e^{\theta' T} \left( \mathbb{E}[\bar{Y}^{n}_{0}] + \kappa_{1} T \right) \le \kappa_{2}. 
    \end{equation*}
    We can obtain a similar bound for the second moment. Let \( \kappa_3 := 2 |\theta| + \sigma^{2}_{\Theta} + \theta^{2} \). Then,  
    \begin{equation*}
        \mathbb{E}[ ((C^{n}_{i})^{+})^{2}] \le \mathbb{E}[  (C^{n}_{i} )^{2}] = 1 - \frac{2\theta}{n} + \frac{\sigma^{2}_{\Theta}+ \theta^{2}}{n}  \le 1 + \frac{\kappa_3}{n}. 
    \end{equation*} 
    Now letting \( \kappa_4 := 2(1+\theta')\kappa_1\kappa_2 + \kappa_1 \), we can obtain another recursive inequality for the second moment: 
    \begin{align*}
        \mathbb{E}\left[ (\bar{Y}^{n}_{i+1})^2 \right] 
        &= 
        \mathbb{E} \left[ \left( (C^{n}_{i})^{+} \bar{Y}^{n}_i + \frac{1}{n} (X^{n}_i)^{+} \right)^2 \right] \nonumber\\
        &= 
        \mathbb{E}[ ((C^{n}_{i})^{+})^{2}] \mathbb{E} [(\bar{Y}^{n}_i)^2]  + \frac{2}{n} \mathbb{E}[  (C^{n}_{i})^{+} ]\mathbb{E}[(X^{n}_i)^{+}] \mathbb{E} [\bar{Y}^{n}_i] + \frac{1}{n^2} \mathbb{E}[([X^{n}_i]^{+})^2] \nonumber\\
        &\le 
        \left( 1+\frac{\kappa_3}{n}  \right) \mathbb{E} [(\bar{Y}^{n}_i)^2] + \frac{1}{n} \left( 2 (1 + \frac{\theta'}{n} ) \kappa_1 \kappa_2   + \frac{\kappa_1}{n}  \right)   \nonumber \\
        &\le 
        \left( 1+\frac{\kappa_3}{n}  \right) \mathbb{E} [(\bar{Y}^{n}_i)^2] + \frac{1}{n} \kappa_4 . 
    \end{align*}
    Again by Lemma \ref{lem:discrete_gronwall}, we obtain 
    \begin{equation*}
        \max_{0 \le i \le \left\lfloor nT \right\rfloor} \mathbb{E} \left[ (\bar{Y}^{n}_{i+1})^2 \right]  
        \le 
        e^{\kappa_3 T} \left( \mathbb{E}[ (\bar{Y}^{n}_0)^{2}] + \kappa_4 T  \right)  
        \le 
        e^{\kappa_3 T} \left( \kappa_0 + \kappa_4 T  \right). 
    \end{equation*}
    Finally we observe that the above bound does not depend on $n$ and conclude the proof. 
\end{proof}

\subsection{Proof of Lemma \ref{lem:fluid_error}} \label{subsec:pf_fluid_error}
 
    For the first statement, observe that 
    \begin{equation*}
        \bar{\epsilon}^{1,n}(t) = \theta \frac{nt - \lfloor nt \rfloor }{n} \bar{W}^{n}_{\lfloor nt \rfloor }, 
    \end{equation*}
   and hence,
    \[
        \sup_{t \in [0,T]} | \bar{\epsilon}^{1,n}(t) | \le  \max_{i = 1, \ldots, \lfloor nT \rfloor } \frac{|\theta| }{n} \bar{W}^{n}_i . 
    \]
    Then using the union bound and Chebyshev's inequality, we have 
    \begin{equation}
        \mathbb{P} \left( \sup_{t \in [0,T]} | \bar{\epsilon}^{1,n}(t)| > \epsilon \right) \le   \mathbb{P} \left( \max_{0 \le k \le \lfloor nT \rfloor } \frac{|\theta|}{n} \bar{W}^{n}_k > \epsilon \right) 
        \le  \sum_{k=0}^{\lfloor nT \rfloor } \mathbb{P} \left( \bar{W}^{n}_k > \frac{n \epsilon}{|\theta|} \right) 
        \le \sum_{k=0}^{\lfloor nT \rfloor } \frac{\theta^2 \mathbb{E}[(\bar{W}^{n}_k)^2]}{\epsilon^2 n^2} \,. \label{eq:fluid_pf_eps1}
    \end{equation}
    Since $\bar{W}^{n}_i \le \bar{Y}^{n}_i$ for all $i \ge 0$, by Lemma \ref{lem:estimate_Y}, there exists $\kappa > 0$ such that 
    \begin{equation} \label{eq:2nd_moment_bound_Wbar}
        \sup_{n \ge 1} \max_{0 \le k \le \lfloor nT\rfloor} \mathbb{E}[(\bar{W}^{n}_k)^2] \le \kappa. 
    \end{equation}
    Therefore, the expression in \eqref{eq:fluid_pf_eps1} converges to $0$ as $n \to \infty$. This concludes the proof for the first statement.  

    For the second statement, we analyze the partial sums   
    \[
        S_{i}^n := \sum_{m=0}^{i} (\Theta_m - \theta) \bar{W}^{n}_m. 
    \]
    Define the maximum of the partial sums by  
    \[
        M_i^n := \max_{0 \le m \le i} S_m^n. 
    \]
    Let $0 \le i,j \le \lfloor nT \rfloor$, observe that 
    \begin{align}
        \mathbb{E} |S_j^n - S_i^n |^2 &= \mathbb{E} \left(  \sum_{m=i+1}^{j} \left( \Theta_m  - \theta \right) \bar{W}^{n}_m \right)^2  \nonumber\\
        &= \sum_{i+1 \le m \le j} \mathbb{E} \left( \Theta_m - \theta \right)^2 \mathbb{E} \left( \bar{W}^{n}_m \right)^2 + \sum_{\substack{i+1 \le l,m \le j \\ l \neq m}} \mathbb{E} \left[ \left( \Theta_m - \theta \right) \left( \Theta_l - \theta \right) \bar{W}^{n}_m \bar{W}^{n}_l \right] \nonumber \\
        &= \sum_{i+1 \le m \le j} \mathbb{E} \left( \Theta_m - \theta \right)^2\mathbb{E} \left( \bar{W}^{n}_m \right)^2. 
        \label{eq:fluid_pf_eps2_1}
    \end{align}
    In \eqref{eq:fluid_pf_eps2_1}, we use the fact that the expectation of the off-diagonal terms is 0. To see this, we can assume without loss of generality that $m > l$. Then observe that $\Theta_m$ is independent of $\Theta_l \bar{W}^{n}_m \bar{W}^{n}_l $ and $\Theta_m - \theta$ has zero expectation.
    
    Define $u_m \equiv u :=  \sigma^2_{\Theta} \kappa \vee 1$. Then, by \eqref{eq:2nd_moment_bound_Wbar} and \eqref{eq:fluid_pf_eps2_1}, we have 
    \[
        \mathbb{E} |S_j^n - S_i^n |^2 \le \sum_{i+1 \le m \le j} \sigma^2_{\Theta} \kappa \le \left( \sum_{i+1 \le m \le j} u_m \right)^{3/2}. 
    \] 
    Using Markov's inequality, the conditions of Theorem 10.2 in \cite{Bill99} are satisfied with $\alpha = 3/4$ and $\beta = 1/2$. Therefore, there exists some $K'>0$ such that 
    \[
        \mathbb{P}\left( \sup_{t \in [0,T]} \left\lvert \epsilon^{n,2}(t) \right\rvert > \epsilon  \right) = \mathbb{P} \left( M_{\lfloor nT \rfloor }^n \ge n \epsilon \right) \le \frac{K'}{(nT) ^2 \epsilon^2} (nT u)^{3/2}. 
    \] 
    The above expression converges to $0$ as $n \to \infty$ and we conclude the proof.

\section{Diffusion Approximation}  \label{app:diffusion_approx}

Similar to the MDP setting, we once again limit ourselves to the cases under which the fluid limit \( \bar{W} \) has a stable fixed point and establish functional central limit theorems (FCLTs) for processes of the form 
\begin{equation} \label{eq:fclt_scalings}
    \hat{W}^{n} (t) = \sqrt{n}  \left( \bar{W}^{n} (t)- \bar{W}^{*} \right), \quad t \in [0,T], 
\end{equation}
with \( \bar{W}^{*} \) being the stable fixed points identified in Table \ref{table:fluid_stability}. Although the developments in this section are not  necessary for analyzing moderate deviations, we include them here to illustrate how proofs for MDP and FCLT are related. Further, the results in this section extend those of \cite{Whi90}, which focused on deriving normal approximations for the stationary distribution and did not provide an explicit diffusion limit. However, by analyzing \eqref{eq:fclt_scalings}, we show that this can be achieved in the present setting. 

First, we shall make several additional assumptions.  
\begin{assumption}[FCLT Assumptions] \label{assump:fclt} 
    \leavevmode
    \begin{enumerate} 
        \item[(i)] \( \hat{W}^{n}(0) \Rightarrow \hat{W}_0 \), where \( \hat{W}_0 \) is some proper random variable. 
        \item[(ii)] \( \sqrt{n} ( \mu_n - \mu) \to  \eta \) for some \( \eta \in \mathbb{R} \). 
    \end{enumerate}
\end{assumption}

\begin{remark} \label{rmk:fclt_constant_fluid}
    Assumption \ref{assump:fclt} (ii) specifies the rate at which the system reaches some nominal load regime. When \( \mu = 0 \), it can be associated with the heavy traffic condition for single server queues. To see this, note
    \begin{equation*}
        \sqrt{n} \mu_{n} = \sqrt{n} (\mathbb{E} \mathfrak{S}_0^{n} - \mathbb{E} \mathfrak{A}_{0}^{n}) = \sqrt{n} (\rho_{n} - 1) \mathbb{E} \mathfrak{A}_{0}^{n}. 
    \end{equation*}
    Suppose \( \mathbb{E} \mathfrak{A}^{n}_{0} \to  1/\lambda \). Then, \( \sqrt{n} \mu_{n} \to  \eta \) if and only if \( \sqrt{n} (\rho_{n} - 1) \to \eta \lambda \), as \( n \to  \infty \). 
\end{remark}

Similar to \eqref{eq:md_representation}, 
the first step is to approximate \eqref{eq:fclt_scalings} by a linear stochastic differential equation driven by two centered random walks, along with several asymptotically negligible terms: 
\begin{equation}\label{eq:fclt_representation}
\begin{aligned} 
    \hat{W}^{n}(t) 
    &=  \hat{W}^{n}(0) +  \frac{1}{\sqrt{n}} \sum_{i=0}^{\left\lfloor nt \right\rfloor-1 } \left( X^{n}_{i} - \mu_{n}  \right) + \frac{1}{\sqrt{n}} \sum_{i=0}^{\left\lfloor nt \right\rfloor -1} (\theta - \Theta_{i}) \bar{W}^{*} - \int_{0}^{t} \theta \hat{W}^{n} (s) \mathrm{d}s   \\
        & \quad \quad + \sqrt{n}  \left( \mu_n - \mu\right) t + \sqrt{n} \left( \mu - \theta \bar{W}^{*} \right) t + \hat{\epsilon}^{1,n}(t) + \hat{\epsilon}^{2,n}(t) + \hat{\epsilon}^{3,n}(t) +  \frac{1}{\sqrt{n}}{L}^{n}_{\left\lfloor nt \right\rfloor-1} ,
\end{aligned}
\end{equation}

where the error terms are 
\begin{equation*}
    \begin{aligned}
    \hat{\epsilon}^{1,n}(t) &= \theta \left(  \int_{0}^{t} \hat{W}^{n}(s) \mathrm{d} s  - \frac{1}{n} \sum_{i=0}^{\lfloor nt \rfloor -1} \hat{W}^{n}_i \right), \\
    \hat{\epsilon}^{2,n}(t) &= \frac{1}{\sqrt{n}} \sum_{i=0}^{\left\lfloor nt \right\rfloor-1} \left( \theta - \Theta_{i} \right) \left( \bar{W}^{n}_{i} - \bar{W}^{*} \right), \\
    \hat{\epsilon}^{3,n}(t) &=  \frac{\left\lfloor nt \right\rfloor-nt}{\sqrt{n}}  \left( \mu_n - \theta \bar{W}^{*} \right).
\end{aligned} 
\end{equation*}

Here are the FCLT results under various parameter settings. 

\begin{theorem}
    \label{thm:diffusion_W}
     Let \( \hat{W}^{n} \) be defined as in \eqref{eq:fclt_scalings} and \( B \) be a standard Brownian motion, then under Assumptions \ref{assump:model} and \ref{assump:fclt}, we have the following:  
    \begin{enumerate}
        \item[(i)] if \( \mu>0\), \( \theta > 0 \) and \( \bar{W}^{*} = \mu / \theta \), then 
            \[
                \hat{W}^{n} \Rightarrow \hat{W} :=\mathcal{M}_\theta \left( \hat{W}_0 + \eta \mathfrak{e} + \sqrt{\sigma_{X}^{2} +  \frac{\mu^{2}}{\theta^{2}} \sigma_{\Theta}^{2}} \ B \right);
            \]
        \item[(ii)] if \(\mu = 0\), \( \theta \ge 0 \) and \( \bar{W}^{*} = 0 \), then 
            \[
                \hat{W}^{n} \Rightarrow \hat{W}:= \mathcal{R}_\theta( \hat{W}_0 + \eta \mathfrak{e} + \sigma_X B);
            \]
        \item[(iii)] if \( \mu < 0\), \( \bar{W}^{*} = 0 \) and $\hat{W}_0 = 0$ a.s., then 
            \[
                \hat{W}^{n} \Rightarrow 0.
            \]
    \end{enumerate}
\end{theorem}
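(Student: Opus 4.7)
The plan is to leverage the representation \eqref{eq:fclt_representation} and mirror the proof strategy of Section \ref{sec:MDP_W}, but at the CLT scale. Each ingredient of the MDP proof has a natural CLT analogue: Donsker's FCLT replaces Theorem \ref{thm:MDP_RW}, tightness replaces exponential tightness, convergence in probability replaces super-exponential convergence, and the continuous mapping theorem carries over unchanged to weak convergence.

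For the driving terms, the standard bivariate Donsker FCLT applied to the two independent i.i.d.\ sequences gives
\[
\Big( \tfrac{1}{\sqrt{n}} \sum_{i=0}^{\lfloor n\cdot \rfloor - 1}(X_i^n - \mu_n),\ \tfrac{1}{\sqrt{n}} \sum_{i=0}^{\lfloor n\cdot \rfloor - 1}(\theta - \Theta_i) \Big) \Rightarrow (\sigma_X B_1,\ \sigma_\Theta B_2),
\]
with $B_1, B_2$ independent standard Brownian motions. By independence, $\sigma_X B_1 - \bar{W}^* \sigma_\Theta B_2$ is distributed as $\sqrt{\sigma_X^2 + (\bar{W}^*)^2 \sigma_\Theta^2}\, B$, which matches the variance in the theorem (specializing to $\bar{W}^* = \mu/\theta$ and $\bar{W}^* = 0$ in cases (i) and (ii) respectively). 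Assumption \ref{assump:fclt} (ii) handles $\sqrt{n}(\mu_n - \mu)\mathfrak{e} \to \eta\mathfrak{e}$.

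The three error terms must vanish in probability in $\mathcal{D}_T$. Direct bounds yield $\|\hat\epsilon^{1,n}\|_T \le (|\theta|/\sqrt{n}) \|\bar{W}^n - \bar{W}^*\|_T$ and $\|\hat\epsilon^{3,n}\|_T \le |\mu_n - \theta\bar{W}^*|/\sqrt{n} \to 0$. The term $\hat\epsilon^{2,n}$ is a martingale whose predictable quadratic variation is controlled by $\int_0^T (\bar{W}^n(s) - \bar{W}^*)^2 \mathrm{d}s$, so Doob's $L^2$ maximal inequality gives $\mathbb{E} \|\hat\epsilon^{2,n}\|_T^2 \le 4\sigma_\Theta^2 \int_0^T \mathbb{E} (\bar{W}^n(s) - \bar{W}^*)^2 \mathrm{d}s$. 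Each of these bounds reduces the claim to showing $\bar{W}^n \to \bar{W}^*$ uniformly on $[0,T]$ in probability and in $L^2$, which follows from Theorem \ref{thm:fluid_limit} combined with the stability analysis of Section \ref{sec:fluid_stability} and the initial-condition bound $\bar{W}^n(0) \to \bar{W}^*$ implied by Assumption \ref{assump:fclt} (i); a second-moment estimate analogous to Lemma \ref{lem:estimate_Y} supplies the uniform $L^2$ bound.

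With these ingredients, the three cases differ only in the handling of reflection. In case (i), $\bar{W}^* > 0$ forces the input to the Skorokhod map in \eqref{eq:fluid_W_reflected} to stay bounded away from zero with probability tending to one, so $\bar{L}^n \equiv 0$ asymptotically and continuous mapping via $\mathcal{M}_\theta$ yields the OU limit. In case (ii), reflection is genuine and continuity of $\mathcal{R}_\theta$ delivers the reflected OU limit. For case (iii), $\mu < 0$ dominates, and a Skorokhod comparison argument analogous to the $\mu < 0$ case of the MDP proof (using Lemma \ref{lem:Skorokhod_comparison} and the fact that $\mathcal{R}(\mu\mathfrak{e}) \equiv 0$) gives $\hat{W}^n \Rightarrow 0$. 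The main obstacle is the coupled control of $\hat\epsilon^{2,n}$, which requires the fluid-scale convergence $\bar{W}^n \to \bar{W}^*$ in $L^2$ to be obtained before one can conclude the CLT-scale vanishing; when the initial condition only satisfies weak convergence as in Assumption \ref{assump:fclt} (i) rather than $L^2$ convergence, a truncation argument $\bar{W}^n(0) \wedge K$ followed by $K \to \infty$ closes the gap.
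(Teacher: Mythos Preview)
Your overall strategy matches the paper's proof: both use the representation \eqref{eq:fclt_representation}, show the three error terms vanish in probability, invoke Donsker's FCLT for the driving random walks, and then apply the continuous mapping theorem through $\mathcal{M}_\theta$ or $\mathcal{R}_\theta$ as appropriate. The handling of the reflection term $\hat L^n$ in cases (i)--(iii) is along the same lines as well.

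The one place your route diverges is the treatment of $\hat\epsilon^{2,n}$. You propose the direct $L^2$ bound $\mathbb{E}\|\hat\epsilon^{2,n}\|_T^2 \le 4\sigma_\Theta^2 \int_0^T \mathbb{E}(\bar{W}^n(s)-\bar{W}^*)^2\,\mathrm{d}s$ and then want $\bar W^n\to\bar W^*$ in $L^2$. But under Assumption~\ref{assump:fclt} you only have $\bar W^n_0\to\bar W^*$ in probability, and Lemma~\ref{lem:estimate_Y} delivers a uniform second-moment bound; together these give convergence in probability (and $L^1$), not $L^2$, so your integral need not vanish without an additional uniform-integrability argument, and truncating the initial condition alone does not upgrade the moment along the whole trajectory. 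The paper sidesteps this by conditioning on the high-probability event $\Gamma^n=\{\|\bar W^n-\bar W^*\|_T\le\eta\}$: on $\Gamma^n$ the martingale increments are bounded by $\eta$, Doob's inequality yields a bound of order $T\sigma_\Theta^2\eta^2/\epsilon^2$, and the complement is handled by the fluid limit in probability; then one sends $\eta\to 0$. This is both simpler and requires only convergence in probability of $\bar W^n$. For case (iii), the paper splits into $\theta\ge 0$ (a direct stopping-time argument \`a la Chen--Yao) and $\theta<0$ (Skorokhod comparison followed by the same stopping-time lemma), whereas your sketch follows the unified MDP route via Lemma~\ref{lem:Skorokhod_comparison} with $|\theta|$; both approaches are valid.
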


\begin{remark}\label{rmk:fclt_comparison}
    \leavevmode
    \begin{enumerate}  
        \item [(a)] Theorem \ref{thm:diffusion_W} (i) corresponds to Theorem 3 in \cite{Whi90}. The limiting process here is an OU process with stationary distribution \( \mathcal{N}(m, \sigma^{2}) \) where 
        \[
            m = \frac{\eta}{\theta}, \quad \sigma^{2} =  \frac{\sigma^{2}_X}{2\theta} + \frac{\mu^{2} \sigma^{2}_\Theta}{2 \theta^{3}}.  
        \]
        Compared to Whitt's result, we have the same variance, but the mean in his paper is \( 0 \). This is because Whitt assumed \( \mathbb{E} X^{n}_0 = \mu \), however, as we assumed it to be \( \mu_n \) in Assumption \ref{assump:model}(iii) and imposed the condition on the rate of convergence in Assumption \ref{assump:fclt}. 
        \item [(b)] In Theorem \ref{thm:diffusion_W} (ii), the limiting process is a reflected OU process when $\theta >0$ and a reflected Brownian motion when $\theta = 0$. We mention that in order for the limiting diffusion process to have a stationary distribution, we need $\eta < 0$ when $\theta =0$.  
    \end{enumerate}
    
\end{remark}

Before giving the proof of Theorem \ref{thm:diffusion_W}, we first prove a lemma on the error terms. 

\begin{lemma} \label{lem:fclt_errors}
   Under Assumptions \ref{assump:model} and \ref{assump:fclt}, \( \|\hat{\epsilon} ^{1,n}\|_{T}  \), \( \|\hat{\epsilon} ^{2,n} \|_{T}  \) and  \( \| \hat{\epsilon} ^{3,n}\|_{T}  \) converge to 0  
    in probability. 
\end{lemma}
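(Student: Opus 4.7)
The plan is to dispatch the three error terms separately, in order of increasing difficulty, exploiting the same martingale techniques used earlier for the MDP analysis but adapted to the simpler FCLT setting.

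The term $\hat{\epsilon}^{3,n}$ is immediate: since $|\lfloor nt\rfloor - nt|\le 1$ and $\mu_n\to\mu$ by Assumption \ref{assump:model}(iii), we get the deterministic bound $\|\hat{\epsilon}^{3,n}\|_T \le n^{-1/2}|\mu_n - \theta\bar{W}^*| \to 0$. For $\hat{\epsilon}^{1,n}$, I would use the piecewise-constant identity
\begin{equation*}
\int_0^t \hat{W}^n(s)\,\mathrm{d}s - \frac{1}{n}\sum_{i=0}^{\lfloor nt\rfloor-1} \hat{W}^n_i \;=\; \Big(t - \frac{\lfloor nt\rfloor}{n}\Big)\,\hat{W}^n(t),
\end{equation*}
which follows by evaluating the integral on the grid $\{i/n\}$. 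This yields
\begin{equation*}
\|\hat{\epsilon}^{1,n}\|_T \le \frac{|\theta|}{n}\|\hat{W}^n\|_T = \frac{|\theta|}{\sqrt{n}}\|\bar{W}^n - \bar{W}^*\|_T .
\end{equation*}
Lemma \ref{lem:W_bar_exp_stocastic_bdd} already provides a super-exponential stochastic bound on $\|\bar{W}^n\|_T$, so in particular $\|\bar{W}^n - \bar{W}^*\|_T = O_p(1)$, and the $1/\sqrt{n}$ prefactor drives $\hat{\epsilon}^{1,n}$ to $0$ in probability uniformly on $[0,T]$.

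The main work is $\hat{\epsilon}^{2,n}$. The crucial structural observation is that since $\bar{W}^n_i$ is $\mathcal{F}^n_{i-1}$-measurable and $\mathbb{E}[\Theta_i - \theta\mid \mathcal{F}^n_{i-1}] = 0$, the process
\begin{equation*}
M^n_k := \sum_{i=0}^{k-1}(\theta - \Theta_i)(\bar{W}^n_i - \bar{W}^*),\qquad k\in\mathbb{N}_0,
\end{equation*}
is an $\{\mathcal{F}^n_k\}$-martingale, with predictable quadratic variation $\sigma_\Theta^2\sum_{i=0}^{k-1}(\bar{W}^n_i - \bar{W}^*)^2$. I would then run a truncation argument parallel to Lemma \ref{lem:eps_V_expeq_0} but only at the $L^2$ level. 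Fix $\eta>0$ and set $\Gamma^n := \{\|\bar{W}^n - \bar{W}^*\|_T \le \eta\}$ and $\Gamma^n_i := \{|\bar{W}^n_i - \bar{W}^*|\le\eta\}$. The truncated martingale
\begin{equation*}
M^{n,\eta}_k := \sum_{i=0}^{k-1}(\theta - \Theta_i)(\bar{W}^n_i - \bar{W}^*) 1_{\Gamma^n_i}
\end{equation*}
agrees with $M^n_k$ on $\Gamma^n$, has step-wise conditional variance bounded by $\sigma_\Theta^2 \eta^2$, and by Doob's $L^2$-inequality satisfies $\mathbb{E}[\sup_{t\in[0,T]} n^{-1}(M^{n,\eta}_{\lfloor nt\rfloor})^2] \le 4\sigma_\Theta^2 \eta^2 T$. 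Combining with Markov's inequality gives, for any $\epsilon > 0$,
\begin{equation*}
\mathbb{P}(\|\hat{\epsilon}^{2,n}\|_T > \epsilon) \;\le\; \frac{4\sigma_\Theta^2\eta^2 T}{\epsilon^2} + \mathbb{P}\big(\|\bar{W}^n - \bar{W}^*\|_T > \eta\big).
\end{equation*}
Sending $n\to\infty$ first (using the fluid convergence $\bar{W}^n \to \bar{W}^*$ in probability, which comes from Theorem \ref{thm:fluid_limit} applied to the constant fluid trajectory, valid here because Assumption \ref{assump:fclt}(i) forces $\bar{W}^n(0)\to\bar{W}^*$ in probability and we start at the stable fixed point) and then $\eta\to 0$ closes the estimate.

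The step I expect to require the most care is the fluid-convergence input at the end of the $\hat{\epsilon}^{2,n}$ argument, since Theorem \ref{thm:fluid_limit} is stated under $L^2$-convergence of $\bar{W}^n_0$, whereas Assumption \ref{assump:fclt}(i) only gives distributional convergence of $\hat{W}^n(0)$. I would bridge this gap either by an additional mild uniform-integrability condition on $\bar{W}^n(0)$ (readily available from the second-moment bounds underlying Lemma \ref{lem:estimate_Y}) or by a standard truncation of the initial condition that leaves the conclusion unaffected; everything else in the proof is straightforward once $\bar{W}^n \to \bar{W}^*$ uniformly in probability is available.
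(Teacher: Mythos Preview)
Your proposal follows essentially the same route as the paper: deterministic bound for $\hat{\epsilon}^{3,n}$, the piecewise-constant identity plus tightness of $\bar{W}^n$ for $\hat{\epsilon}^{1,n}$, and the truncation-on-$\Gamma^n$ martingale argument with Doob's inequality for $\hat{\epsilon}^{2,n}$ (the paper applies Doob to the submartingale $(Z^n_k)^2$ rather than the $L^2$ version, so it gets the constant $1$ instead of $4$, but the structure is identical).

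Two small remarks. First, you cite Lemma~\ref{lem:W_bar_exp_stocastic_bdd} to get $\|\bar{W}^n\|_T = O_p(1)$, but that lemma lives under Assumption~\ref{assump:MDP_W} (exponential moments), which is not assumed here; the paper instead extracts tightness directly from the fluid limit Theorem~\ref{thm:fluid_limit}, and the second-moment bound in Lemma~\ref{lem:estimate_Y} would also suffice. Second, the gap you flag at the end---that Theorem~\ref{thm:fluid_limit} is stated under $L^2$ convergence of $\bar{W}^n_0$ while Assumption~\ref{assump:fclt}(i) only gives weak convergence---is a genuine observation that applies equally to the paper's own proof, which invokes Theorem~\ref{thm:fluid_limit} without comment; your proposed fix via truncation or a mild uniform-integrability hypothesis is the natural patch.
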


\begin{proof}

    Let \( \epsilon > 0 \). Similar to the proof of Lemma \ref{lem:fluid_error}, we have  
    \begin{equation*}
        \left\| \hat{\epsilon}^{1,n} \right\|_{T} 
        \le  \max_{i = 1} ^{\left\lfloor nt \right\rfloor} \left\vert \frac{\theta}{n} \hat{W}^{n}_i \right\vert  
        = \max_{i = 1} ^{\left\lfloor nt \right\rfloor} \left| \frac{\theta}{\sqrt{n}} \bar{W}^{n}_{i} - \frac{ 1}{\sqrt{n}} \bar{W}^{*}  \right|  
        \le  \frac{\left| \theta \right|}{\sqrt{n}} \max_{i = 1} ^{\left\lfloor nt \right\rfloor} \bar{W}^{n}_{i} + \frac{1}{\sqrt{n}}   \bar{W}^{*}.  
    \end{equation*}
    Then for \( n \) large enough such that \( n^{-1 /2} \bar{W}^{*} < \epsilon /2 \), we have that 
    \begin{equation*}
        \mathbb{P} \left(  \left\| \hat{\epsilon}^{1,n} \right\|_{T} > \epsilon \right)  \le \mathbb{P} \left( \max_{i = 1} ^{\left\lfloor nt \right\rfloor} \bar{W}^{n}_{i} > \sqrt{n} \frac{\epsilon}{2 \left\vert \theta \right\vert }  \right) = \mathbb{P} \left( \left\| \bar{W}^{n} \right\|_{T}  > \sqrt{n} \frac{\epsilon}{2 \left\vert \theta \right\vert }  \right). 
    \end{equation*}
    From Theorem~\ref{thm:fluid_limit}, the convergence of the fluid-scaled process \( \bar{W}^{n} \) implies that the sequence forms a tight family. Consequently, the probability on the right-hand side can be made arbitrarily small by taking \( n \) sufficiently large. It follows that \(\| \hat{\epsilon}^{1,n} \|_{T}\) converges to zero in probability.

    To handle \( \hat{\epsilon}^{2,n} \), we once again appeal to the fluid limit results.  
    Let \( \epsilon > 0 \), \( \eta > 0 \) and consider the event 
    \[
        \Gamma^{n} = \bigl\{ \|\bar{W}^{n} - \bar{W}^* \|_{T} \le \eta \bigr\}. 
    \]
    We first bound the probability of the event \( \{ \|\hat{\epsilon}^{2,n}\|_T > \epsilon \} \) by splitting it into two cases using \( \Gamma^{n} \):   
    \begin{equation}
        \label{eq:fclt_errors_split_event}
        \mathbb{P}  \left( \|\hat{\epsilon}^{2,n}\| _{T} > \epsilon \right)
        \le \mathbb{P} \left( \{ \|\hat{\epsilon}^{2,n}\| _{T} > \epsilon \} \cap   \Gamma^{n} \right) 
        + \mathbb{P} \!\left( \|\bar{W}^{n} - \bar{W}^*\|_{T} >  \eta  \right).
    \end{equation}
    As \( n \to \infty \), the second term vanishes by Theorem \ref{thm:fluid_limit}. Therefore, it suffices to estimate the first term. First let \( \Gamma_i^{n} = \{ \vert \bar{W}^{n}_{i} - \bar{W}^* \vert  \le \eta \} \) and observe the following equivalence of events: 
    \begin{align}
        \label{eq:fclt_errors_equivalent_events} 
        \{ \| \hat{\epsilon}^{2,n} \|_T > \epsilon \} \cap  \Gamma 
        &=  \left\{ \sup_{t \in [0,T]} \left\vert \sum_{i=0}^{\left\lfloor nt \right\rfloor-1} (\theta - \Theta_{i}) (\bar{W}^{n}_{i}- \bar{W}^{*}) 1_{\Gamma_i^{n}} \right\vert > \sqrt{n}\epsilon  \right\} \cap \Gamma^{n} \nonumber 
        \\
        &= \left\{ \sup_{t \in [0,T]}   \left( \sum_{i=0}^{\left\lfloor nt \right\rfloor-1} (\theta - \Theta_{i}) (\bar{W}^{n}_{i}- \bar{W}^{*}) 1_{\Gamma_i^{n}} \right)^{2}   >  n \epsilon^{2}  \right\} \cap \Gamma^{n}\,.
    \end{align}
    For \( k \ge 0 \), let us denote \( Z ^{n}_k =  \sum_{i=0}^{k} (\theta - \Theta_{i}) (\bar{W}^{n}_{i}- \bar{W}^{*}) 1_{\Gamma_i^{n}} \).  It is easy to check that \( \{ (Z^{n}_{k})^{2} \}_k \) is a  \( \{ \mathcal{F}^{n}_k \} \)-submartingale. Then by Doob's martingale inequality, 
    \begin{equation}
        \label{eq:fclt_errors_doob_ineq} 
        \mathbb{P} \left( \{  \|\hat{\epsilon}^{2,n}\|_{T} > \epsilon \} \cap  \Gamma^{n} \right)
         \le   
        \mathbb{P} \left( \max_{0 \le k \le \left\lfloor nT \right\rfloor-1} (Z^{n}_{k})^{2} > n \epsilon^{2}  \right)  \le
        \frac{1}{n \epsilon^{2}} \mathbb{E} \left[  (Z^{n}_{\left\lfloor nT \right\rfloor-1})^{2}\right] . 
    \end{equation}
    By expanding the squares and using the fact that the cross terms have zero expectation (see \eqref{eq:fluid_pf_eps2_1}), 
    \begin{align}
        \label{eq:fclt_errors_moment_bd} 
        \mathbb{E} \left[  (Z^{n}_{\left\lfloor nT \right\rfloor-1})^{2} \right]  
        &=  
         \sum_{i=0}^{\lfloor nT \rfloor - 1} \mathbb{E} \left[ (\theta - \Theta_{i})^{2} \bigl(\bar{W}^{n}_{i} - \bar{W} \bigr)^{2} 1_{\Gamma^{n}_{i}} \right] \nonumber \\
        &\qquad + \sum_{\substack{0 \le i,j \le \lfloor nT \rfloor - 1 \\ i \neq j}} \mathbb{E}\!\left[ (\theta - \Theta_{i})(\theta - \Theta_{j}) \bigl(\bar{W}^{n}_{i} - \bar{W} \bigr) \bigl(\bar{W}^{n}_{j} - \bar{W} \bigr) 1_{\Gamma^{n}_{i}} 1_{\Gamma^{n}_{j}} \right] 
        \le  nT \sigma_{\Theta}^{2} \eta^{2}. 
    \end{align}
    Therefore, combining \eqref{eq:fclt_errors_doob_ineq}, \eqref{eq:fclt_errors_split_event}, \eqref{eq:fclt_errors_doob_ineq} and \eqref{eq:fclt_errors_moment_bd}, we have 
    \begin{equation*}
        \lim_{n \to \infty} \mathbb{P}  \left( \|\hat{\epsilon}^{2,n}\| _{T} > \epsilon \right)
        \le  \frac{T \sigma^{2}_{\Theta}\eta^{2}}{\epsilon^{2}}.
    \end{equation*}
    Since \( \eta > 0 \) is arbitrary, letting \( \eta \to 0 \) implies that \( \|\hat{\epsilon}^{2,n}\|_{T} \to 0 \) in probability.

    Finally, the convergence of \( \hat{\epsilon}^{3,n}\) simply uses the assumption that \( \mu_n \to \mu \).  This concludes the proof.  
\end{proof}

Now we are ready to prove the FCLT results. 

\begin{proof}[Proof of Theorem \ref{thm:diffusion_W}.]
    We shall examine the convergence for each of the terms in \eqref{eq:fclt_representation}, and then use the continuous mapping theorem to obtain the limit for \( \hat{W}^{n} \). We already have weak convergence of the initial condition by Assumption \ref{assump:fclt} and weak convergence of the error terms by Lemma \ref{lem:fclt_errors}. 
    By Donsker's theorem (see for example \cite{Bill99} Section II.8), we have 
    \begin{equation} \label{eq:fclt_pf_brownian_term}
        \frac{1}{\sqrt{n}} \sum_{i=0}^{\left\lfloor nt \right\rfloor-1 } \left( X^{n}_{i} - \mu_{n}  \right) + \frac{1}{\sqrt{n}} \sum_{i=0}^{\left\lfloor nt \right\rfloor -1} (\theta - \Theta_{i}) \bar{W}^{*} \Rightarrow  \sqrt{\sigma_X^{2} + (\bar{W}^{*} \sigma_{\Theta})^{2}} B. 
    \end{equation}
    So we group these terms and denote 
    \begin{align*}
        \Phi^{n}(t)  
        &= \hat{W}^{n}(0) +  \frac{1}{\sqrt{n}} \sum_{i=0}^{\left\lfloor nt \right\rfloor-1 } \left( X^{n}_{i} - \mu_{n}  \right) + \frac{1}{\sqrt{n}} \sum_{i=0}^{\left\lfloor nt \right\rfloor -1} (\theta - \Theta_{i}) \bar{W}^{*} \\
        & \qquad \qquad + \sqrt{n}\left( \mu_n - \mu \right) t   + \hat{\epsilon}^{1,n}(t) + \hat{\epsilon}^{2,n}(t)   + \hat{\epsilon}^{3,n}(t).
    \end{align*}
    The continuous mapping theorem implies that 
    \begin{equation}\label{eq:fclt_Phi}
        \Phi^{n} \Rightarrow  \Phi := \hat{W}_0 + \eta \mathfrak{e} + \sqrt{ \sigma_X^{2} + (\bar{W}^{*})^{2} \sigma_\Theta^{2}} \  B.
    \end{equation}
    We write \eqref{eq:fclt_representation} more compactly as 
    \begin{equation} \label{eq:fclt_compact}
        \hat{W}^{n}(t) = \Phi^{n}(t) - \int_{0}^{t} \theta \hat{W}^{n} (s) \mathrm{d}s    + \sqrt{n}  \left( \mu  - \theta \bar{W}^{*} \right) t  +  \frac{1}{\sqrt{n}}{L}^{n}_{\left\lfloor nt \right\rfloor-1} ,
    \end{equation}

    Now we analyze the behavior of the leftover terms in each of the three cases. 
    
    \textbf{Case i:  } Suppose  \( \mu > 0, \theta > 0 \) with \( \bar{W}^{*}= \mu/\theta \). On the event \( \{ \| \bar{W}^{n} - \bar{W}^* \|_T < \frac{\mu}{2\theta} \} \), it must be that \( \bar{L}^{n}  \equiv 0 \). Hence, 
    \begin{align*}
        &\mathbb{P} \left( \left\| n^{1/2}  \bar{L}^{n} \right\|_{T} > \epsilon \right) \\
        & \qquad \le \mathbb{P} \left( \left\| n^{1/2}  \bar{L}^{n} \right\|_{T} > \epsilon,\ \left\| \bar{W}^{n} - \bar{W}^* \right\|_{T} < \frac{\mu}{2 \theta}  \right) + \mathbb{P} \left( \left\| \bar{W}^{n} - \bar{W}^* \right\|_{T} \ge  \frac{\mu}{2 \theta} \right) \\
        & \qquad = \mathbb{P} \left( \left\| \bar{W}^{n} - \bar{W}^* \right\|_{T} \ge  \frac{\mu}{2 \theta} \right). 
    \end{align*}
    Using Theorem \ref{thm:fluid_limit}, the above probability goes to \( 0 \) as \( n \to \infty \). Therefore we conclude 
    \begin{equation} \label{eq:fclt_pf_L_1}
        n^{-1/2} L^{n} \Rightarrow 0 \quad \textrm{as } n \to  \infty.
    \end{equation}
    Using the mapping $\mathcal{M}_\theta:\mathcal{D}_T \to \mathcal{D}_T$, \eqref{eq:fclt_compact} simplifies to  
    \begin{equation*}
        \hat{W}^{n}(t) 
        =  \mathcal{M}_\theta \Bigg(  \Phi^{n}(t)   +  \frac{1}{\sqrt{n}}{L}^{n}_{\left\lfloor nt \right\rfloor-1}  \Bigg) . 
    \end{equation*}
    Since \( \mathcal{M}_\theta \) is Lipschitz continuous, we use \eqref{eq:fclt_pf_L_1} and apply the continuous mapping theorem to conclude the proof for case 1.   

    For the rest of the cases, the stability point \( \bar{W}^{*} = 0 \). 
    From Remark \ref{rmk:fclt_constant_fluid}, we know that \( \bar{W} \equiv 0 \). Define the process \( \hat{L}^{n} \) by  
    \[
        \hat{L}^{n}(t) = \frac{1}{\sqrt{n}} L^{n}_{\left\lfloor nt \right\rfloor-1} , \quad t \ge 0. 
    \]
    By the same arguments for \eqref{eq:fluid_W_reflected} applied to \eqref{eq:fclt_compact}, we have 
    \begin{equation} \label{eq:fclt_linref_rep}
        (\hat{W}^{n}, \hat{L}^{n}) = (\mathcal{R}_\theta, \mathcal{R}'_{\theta}) \Bigg(   \Phi^{n} + \sqrt{n} \mu t   \Bigg). 
    \end{equation}

    \textbf{Case ii:} Let \( \mu = 0 \), \( \theta \ge 0 \) and \( \bar{W}^{*} = 0 \). 
    Assumptions \ref{assump:fclt}, \eqref{eq:fclt_Phi}, \eqref{eq:fclt_linref_rep} and the continuous mapping theorem give the desired result. 

    \textbf{Case iii (a):} Now suppose \( \mu < 0 \) with \( \bar{W}^{*}= 0 \) and \( \theta \ge  0 \). 
    Let \( c_n = \sqrt{n} \mu  \), then by assumption, \( c_n \to - \infty \) as \( n \to \infty \). Then we can write \eqref{eq:fclt_compact} as
    \begin{equation*}
        \hat{W}^{n}(t) = \Phi^{n}(t) - \int_{0}^{t} \theta \hat{W}^{n}(s) \mathrm{d}s + c_n t + \hat{L}^{n}(t). 
    \end{equation*}
    Let \( \tau_n (t) = \sup \{s: \hat{W}^{n}(s) =0, s \le t\} \). Then, we have 
    \begin{align} \label{eq:fclt_stoppingtime_bound}
        0 \le \hat{W}^{n}(t) &= \hat{W}^{n}(t) - \hat{W}^{n}(\tau_n (t)-)  \nonumber\\
        &= \Phi^{n}(t) - \Phi^{n}(\tau_n (t)-) - \int_{\tau_n (t)}^{t} \theta \hat{W}^{n}(s) \mathrm{d}s + c_n (t - \tau_n (t)) \nonumber \\
        &\le \Phi^{n}(t) - \Phi^{n}(\tau_n (t)-) + c_n (t - \tau_n (t)). 
    \end{align}
    By \eqref{eq:fclt_Phi}, the limit of \( \Phi^{n} \) is in \( \mathcal{C}_T \)  and \(\Phi(0)=\hat{W}_0=0\) a.s. By the Skorokhod representation theorem, we may work in a probability space where \(\|\Phi^n - \Phi\|_{T} \to 0\) a.s. Then the exact same proof for Lemma 6.4 (ii) in \cite{ChenY01} applies and we can obtain   \(\|\tau_n - \mathfrak{e}\|_{T} \to  0 \) a.s.\ as \( n \to \infty \).  Furthermore, since \( c_n < 0 \), we have \( 0 \le \hat{W}^{n} (t) < \Phi^{n}(t) - \Phi^{n}(\tau_n (t)-) \), which implies \( \hat{W}^{n} \Rightarrow 0 \).

    \textbf{Case iii (b):} Now suppose \( \mu < 0 \) with \( \bar{W}^{*}= 0 \) and \( \theta <  0 \). 
    The main idea is to bound the system  by another reflected queue without state dependence. Take \( \epsilon > 0 \) so that \( \mu - \theta \epsilon < 0 \). By Theorem \ref{thm:fluid_limit}, the probability of the event \( \{ \| \bar{W}^{n} \|_{T} \ge  \epsilon \} \) is asymptotically negligible, so we can restrict our consideration to the event \( \{ \| \bar{W}^{n} \|_{T}  < \epsilon \} \). In this case, we have  
    \begin{align} \label{eq:fclt_case_2_bound}
        &\Phi^{n}(t) +  \sqrt{n} \mu t  + \int_{0}^{t} ( - \theta) \hat{W}^{n}(s) \mathrm{d}s \nonumber \\
        & \qquad \le    \Phi^{n}(t) +   \sqrt{n}  \mu t + \int_{0}^{t} ( - \theta) \hat{W}^{n}(s) \mathrm{d}s + \int_{0}^{t} (-\theta) \sqrt{n} \left( \epsilon - \bar{W}^{n}(s) \right) \mathrm{d}s \nonumber \\
        & \qquad =  \Phi^{n}(t) +   \sqrt{n} ( \mu - \theta \epsilon) t . 
    \end{align}
    If we let \( x \equiv  \Phi^{n} + c_n \mathfrak{e}\), and $\mathcal{U}_\theta[x] = u $ be the solution to the integral equation \( u(t) = x(t) - \int_{0}^{t} \theta \mathcal{R}(u) (s) ds  \), then by \eqref{eq:fclt_linref_rep} and \eqref{eq:lgrm}, we have
    \[
        \hat{W}^{n} = \mathcal{R}_\theta \left(x \right)   = \mathcal{R}  \mathcal{U}_{\theta} [x]  = \mathcal{R} \left( x  - \int_{0}^{\cdot} \theta \mathcal{R} \mathcal{U}_{\theta}[x](s) \mathrm{d}s \right) = \mathcal{R} \left( x  - \int_{0}^{\cdot} \theta \hat{W}^{n}(s) \mathrm{d}s \right). 
    \]
    By \eqref{eq:fclt_case_2_bound} and Lemma \ref{lem:Skorokhod_comparison}, we can obtain the bound 
    \begin{equation*}
        0 \le \hat{W}^{n} = \mathcal{R}_\theta \left( \Phi^{n} + c_n \mathfrak{e} \right)  \le \mathcal{R} \left( \Phi^{n} + \sqrt{n} \left( \mu - \theta \epsilon \right) \mathfrak{e} \right). 
    \end{equation*}
    By the Skorokhod representation theorem and Lemma 6.4 (ii) in \cite{ChenY01}, the reflected system \( \mathcal{R} \left( \Phi^{n} + \sqrt{n} \left( \mu - \theta \epsilon \right) \mathfrak{e} \right) \Rightarrow 0 \), and therefore
    \[
    \mathcal{R}_\theta \left( \Phi^{n} + c_n \mathfrak{e} \right) \Rightarrow 0. 
    \]
    This concludes the proof.  
\end{proof}

\section{Background and Useful Facts} \label{app:useful_facts}

\subsection{Miscellaneous Facts}

\begin{lemma}[Discrete Gronwall's lemma] \label{lem:discrete_gronwall}
    Consider a real sequence \( \{ u_k, k \ge 0 \} \) that satisfies 
    \[
        u_{k+1} \le (1+ \alpha) u_k + b_k, \quad \forall k \ge 0, 
    \]
    where \( \alpha \ge 0 \) and \( b_k \ge 0 \) for all \( k \ge 0 \). 
    Then 
    \begin{equation*}
        u_{k} \le  e^{k \alpha}   u_{0} + e^{k \alpha}  \sum_{j=0}^{k-1} b_{j}  . 
    \end{equation*} 
\end{lemma}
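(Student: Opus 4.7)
The plan is to iterate the recursion and then apply the elementary inequality $1+\alpha \le e^{\alpha}$. This is the standard route for discrete Gronwall, and the estimate in the statement is not tight — it deliberately pulls the factor $e^{k\alpha}$ out of the whole sum on the right — so no delicate bookkeeping is required.

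First I would unfold the recursion by induction on $k$. The base case $k=0$ is trivial since the bound reads $u_0 \le u_0$ (the sum over $j=0,\ldots,-1$ is empty). For the inductive step, assume
\[
u_k \le (1+\alpha)^{k} u_0 + \sum_{j=0}^{k-1}(1+\alpha)^{k-1-j} b_j.
\]
Applying the hypothesis $u_{k+1}\le (1+\alpha) u_k + b_k$ and using $\alpha \ge 0$, $b_j \ge 0$, multiply through by $(1+\alpha)$ and add $b_k$ to obtain
\[
u_{k+1} \le (1+\alpha)^{k+1} u_0 + \sum_{j=0}^{k-1}(1+\alpha)^{k-j} b_j + b_k = (1+\alpha)^{k+1} u_0 + \sum_{j=0}^{k}(1+\alpha)^{k-j} b_j,
\]
which is exactly the inductive hypothesis at level $k+1$.

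Next I would invoke the elementary inequality $1+\alpha \le e^{\alpha}$, valid for all $\alpha \in \mathbb{R}$. Since $\alpha \ge 0$, raising to nonnegative integer powers preserves direction, so $(1+\alpha)^{k} \le e^{k\alpha}$ and similarly $(1+\alpha)^{k-1-j} \le e^{(k-1-j)\alpha} \le e^{k\alpha}$ for each $0 \le j \le k-1$. Because each $b_j \ge 0$, substituting these into the displayed bound gives
\[
u_k \le e^{k\alpha} u_0 + e^{k\alpha}\sum_{j=0}^{k-1} b_j,
\]
as claimed. There is no serious obstacle here; the only point worth a sentence is that the nonnegativity of $\alpha$ and of the $b_j$ is exactly what lets one replace each $(1+\alpha)^{k-1-j}$ by the uniform upper bound $e^{k\alpha}$ without reversing any inequalities.
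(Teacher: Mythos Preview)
Your proof is correct and follows exactly the same route as the paper's own proof, which simply says ``expand the recursion and observe $(1+\alpha)^k \le e^{k\alpha}$''; you have merely written out the induction in full. One small remark: your final substitution also tacitly uses $u_0 \ge 0$ when replacing $(1+\alpha)^k u_0$ by $e^{k\alpha} u_0$, which the lemma statement does not assume but which holds in every application in the paper.
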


\begin{proof}
    This is straightforward by expanding the recursion and observing \( (1+\alpha)^{k} \le e^{k \alpha} \). 
\end{proof}

Here is a functional weak law of large numbers (FWLLN) for partial sums of triangular arrays that is sufficient for our purpose. 
\begin{lemma}
    [FWLLN] \label{lem:FWLLN}
    For each $ n \in \mathbb{N} $, let the random variables $ X_{n,1}, \ldots , X_{n,n} $ be i.i.d.\ with $ \mathbb{E}X_{n,1} = \mu_{n} $ and $ \operatorname{Var}(X_{n,1}) = \sigma_{n}^{2} $ such that $ \lim_{n \to \infty} \mu_{n} = \mu \in \mathbb{R} $ and $ \sup_{n \in \mathbb{N}} \sigma_{n}^{2} < \infty $. Consider a process defined by the partial sum 
    \[
        S^{n}(t ) = \frac{1}{n} \sum_{i=1}^{\left\lfloor nt \right\rfloor} X_{n,i}, \quad t \in [0,T].  
    \]
    Then $ \| S^n - \mu \mathfrak{e} \|_{T} \to 0 $ in probability. 
\end{lemma}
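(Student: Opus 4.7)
The plan is to split the process into a centered martingale piece and a deterministic drift piece, and handle each separately. Write
\begin{equation*}
S^n(t) = M^n(t) + D^n(t), \qquad M^n(t) := \frac{1}{n}\sum_{i=1}^{\lfloor nt\rfloor}(X_{n,i}-\mu_n), \qquad D^n(t) := \frac{\lfloor nt\rfloor}{n}\mu_n.
\end{equation*}
The deterministic term is trivial: since $|D^n(t)-\mu t|\le |\mu_n-\mu|\,t + |\mu_n|/n$, and $\mu_n\to\mu$, we get $D^n\to\mu\mathfrak{e}$ uniformly on $[0,1]$.

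For the stochastic term $M^n$, the key observation is that on the dyadic grid $\{k/n\}_{k=0}^{n}$ the process $nM^n(k/n)=\sum_{i=1}^{k}(X_{n,i}-\mu_n)$ is a mean-zero random walk with independent increments, hence a martingale in $k$. Moreover $M^n$ is constant on each interval $[k/n,(k+1)/n)$, so $\sup_{t\in[0,1]}|M^n(t)|=\max_{0\le k\le n}|M^n(k/n)|$. Applying Kolmogorov's maximal inequality (or Doob's $L^2$ inequality) gives
\begin{equation*}
\mathbb{P}\Bigl(\sup_{t\in[0,1]}|M^n(t)|>\varepsilon\Bigr)
= \mathbb{P}\Bigl(\max_{0\le k\le n}\Bigl|\sum_{i=1}^{k}(X_{n,i}-\mu_n)\Bigr|>n\varepsilon\Bigr)
\le \frac{n\sigma_n^2}{n^2\varepsilon^2}=\frac{\sigma_n^2}{n\varepsilon^2}.
\end{equation*}
Since $\sup_{n}\sigma_n^2<\infty$, this tends to $0$ as $n\to\infty$, so $\|M^n\|_1\to 0$ in probability.

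Combining the two pieces via the triangle inequality yields $\|S^n-\mu\mathfrak{e}\|_1\to 0$ in probability, which is exactly u.o.c.\ convergence on the compact time interval $[0,1]$. There is no real obstacle here beyond recognizing that the triangular array has uniformly bounded variance, which legitimizes the maximal-inequality bound; the argument is entirely standard and only requires keeping track of the $n$-dependence of $\mu_n$ and $\sigma_n^2$, which is absorbed by the uniform hypotheses $\mu_n\to\mu$ and $\sup_n\sigma_n^2<\infty$.
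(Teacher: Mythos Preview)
Your proof is correct and follows essentially the same approach as the paper: decompose into a centered random walk plus a deterministic drift, control the drift by $\mu_n\to\mu$, and bound the supremum of the centered part via Kolmogorov's maximal inequality using $\sup_n\sigma_n^2<\infty$. Your handling of the drift term $D^n(t)=\lfloor nt\rfloor\mu_n/n$ is in fact slightly more careful than the paper's, which writes $\mu_n t$ and silently drops the $O(|\mu_n|/n)$ discrepancy.
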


\begin{proof}
    We have 
    \begin{equation*}
        \sup_{t \in [0,T]} \vert S^{n}(t ) - \mu t \vert \le  \frac{1}{n} \max_{1 \le k \le \lfloor nT \rfloor} \vert \sum_{i=1}^{k} (X_{n,i} - \mu_{n}) \vert + \sup_{t \in [0,T]} \vert \mu_{n} t - \mu t  \vert .  
    \end{equation*}
    Then for $ \epsilon > 0 $, 
    \begin{equation*}
        \mathbb{P} \left( \sup_{t \in [0,T]}  \vert S^{n}(t ) - \mu t \vert \ge  \epsilon \right) \le \mathbb{P} \left(\max_{1 \le k \le \lfloor nT \rfloor} \left\vert \sum_{i=1}^{k}  (X_{n,i} - \mu_{n}) \right\vert \ge  \frac{\epsilon n}{2}   \right) + \mathbb{P} \left( \sup_{t \in [0,T]} \vert \mu_{n}t - \mu t \vert \ge  \frac{\epsilon}{2} \right). 
    \end{equation*}
    By Kolmogorov's maximal inequality, as $ n \to  \infty $, 
    \begin{equation*}
        \mathbb{P} \left( \max_{1 \le k \le \lfloor nT \rfloor} \left\vert \sum_{i=1}^{k}  (X_{n,i} - \mu_{n}) \right\vert \ge  \frac{\epsilon n }{2}    \right) \le  \frac{ 4 \operatorname{Var}(\sum_{i=1}^{n} Y_{n,i})}{ \epsilon^{2} T^2 n^{2}} = \frac{ 4 \sigma^{2}_{n}}{\epsilon^{2} T^2 n } \to  0 . 
    \end{equation*}
    Also, we assumed $ \mu_{n} \to  \mu $, therefore 
    \begin{equation*}
        \lim_{n \to \infty} \mathbb{P} \left( \sup_{t \in [0,T]}  \vert S^{n}(t ) - \mu t \vert \ge  \epsilon \right) = 0. 
    \end{equation*} 
    This completes the proof.
\end{proof}

\begin{remark}
    \label{rmk:analysis_facts}
    We frequently use several facts in the proofs, and we collect them here for clarity. 
    \begin{enumerate}
        \item [(a)]Note that for any $x,y \in \mathbb{R}$, $\log(x+y) \le \log(2) + \log (x \vee y)$. Also, we have from real analysis that for any sequences $\{x_n, \ n \in \mathbb{N}\}$, $\{y_n, \ n \in \mathbb{N}\}$ and $\{a_n, \ n \in \mathbb{N}\}$, 
        \begin{equation*}
            \limsup_{n \to \infty} \frac{1}{a_n} \log (x_n \vee y_n) \le \left( \limsup_{n \to \infty} \frac{1}{a_n} \log x_n \right) \vee \left( \limsup_{n \to \infty} \frac{1}{a_n} \log y_n \right). 
        \end{equation*}
        Then it follows that if $a_n \to \infty$ as $n \to \infty$, 
        \begin{align}
            \label{eq:rmk1}
            \limsup_{n \to \infty} \frac{1}{a_n} \log (x_n + y_n) &\le \limsup_{n \to \infty} \frac{\log(2)}{a_n} + \left( \limsup_{n \to \infty} \frac{1}{a_n} \log x_n \right) \vee \left( \limsup_{n \to \infty} \frac{1}{a_n} \log y_n \right) \nonumber \\
            &= \left( \limsup_{n \to \infty} \frac{1}{a_n} \log x_n \right) \vee \left( \limsup_{n \to \infty} \frac{1}{a_n} \log y_n \right). 
        \end{align}

        \item [(b)] Sometimes, we use a symmetry argument which relies on the following fact. Let $x \equiv \{x(t), \ t \in [0,T]\}$ be a process in $\mathcal{D}_T$. Then, for any $\delta>0$,
            \begin{align}
                \label{eq:rmk2}
                \mathbb{P} \Big( \sup_{t \in [0,T]}|x(t)| > \delta \Big) \le \mathbb{P} \bigg( \Big\{\sup_{t \in [0,T]} x(t) > \delta \Big\} \cup \Big\{ \sup_{t \in [0,T]} -x(t) > \delta \Big\} \bigg) \nonumber\\
                \le \mathbb{P} \bigg( \sup_{t \in [0,T]} x(t) > \delta \bigg) + \mathbb{P} \bigg( \sup_{t \in [0,T]} -x(t) > \delta \bigg) .
            \end{align}  
    \end{enumerate}

\end{remark}

\subsection{The Contraction Principle and Continuous Maps}
\label{subsec:LGRM}
In this paper, we prove the MDP results using the contraction principle. Here is a precise statement: 
\begin{theorem}[Contraction Principle]
     Let $f:\mathcal{D}_T \to \mathcal{D}_T$ be continuous and suppose the family $\{x_n\}_{n \in \mathbb{N}} $ satisfies an LDP in \(\mathcal{D}_T\) with rate $a_n$ and rate function $I$, then $\{f(x_n),\ n \ge 1\}$ satisfies an LDP with rate $a_n$ and rate function 
    \begin{equation}
        \label{eq:contraction_principle}
        I'(y) = \inf_{x: f(x) =y} I(x). 
    \end{equation}
\end{theorem}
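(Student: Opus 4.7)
The plan is to verify the three defining properties of an LDP for the pushforward family $\{f(x_n)\}_{n\in\mathbb{N}}$: good rate function, upper bound on closed sets, and lower bound on open sets. Throughout, I will use that since $f\colon \mathcal{D}_T \to \mathcal{D}_T$ is continuous, preimages $f^{-1}(B)$ inherit the topological type (open/closed) of $B$, which is the key leverage that the contraction principle exploits. I will adopt the convention $\inf \emptyset = +\infty$ so that $I'$ is defined on all of $\mathcal{D}_T$.

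First I would show $I'$ is a good rate function. For any $\alpha \ge 0$, I claim $\{y : I'(y) \le \alpha\} = f(\{x : I(x) \le \alpha\})$. The inclusion ``$\supseteq$'' is immediate from the definition of $I'$. For ``$\subseteq$'', if $I'(y) \le \alpha$, then by definition of the infimum there exists a minimizing sequence $x_k \in f^{-1}(\{y\})$ with $I(x_k) \to I'(y)$; since the sublevel sets of $I$ are compact, a subsequence of $\{x_k\}$ converges to some $x^*$ with $I(x^*)\le \alpha$ (by lower semicontinuity of $I$), and continuity of $f$ gives $f(x^*) = y$, so in fact the infimum is attained. Hence $\{y : I'(y)\le \alpha\}$ is the continuous image of a compact set and therefore compact, which simultaneously gives lower semicontinuity of $I'$ (via closedness of level sets).

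Next I would establish the upper and lower bounds. For a closed set $B\subseteq \mathcal{D}_T$, continuity of $f$ makes $f^{-1}(B)$ closed, so the LDP upper bound for $\{x_n\}$ yields
\[
\limsup_{n\to\infty} \frac{1}{a_n} \log \mathbb{P}(f(x_n)\in B) = \limsup_{n\to\infty}\frac{1}{a_n}\log \mathbb{P}(x_n \in f^{-1}(B)) \le -\inf_{x\in f^{-1}(B)} I(x).
\]
The right-hand side equals $-\inf_{y\in B} I'(y)$ by rewriting the infimum as a double infimum over $y\in B$ and over $x\in f^{-1}(\{y\})$. An entirely parallel computation with $B$ replaced by an open set $G$ and $f^{-1}(G)$ open gives the lower bound
\[
\liminf_{n\to\infty} \frac{1}{a_n} \log \mathbb{P}(f(x_n)\in G) \ge -\inf_{x\in f^{-1}(G)} I(x) = -\inf_{y\in G} I'(y).
\]

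The only mildly delicate point is the ``inf is attained'' argument used to identify the level sets of $I'$; everything else is a direct reshuffling of infima together with continuity of $f$. I do not expect any genuine obstacle here, as the result is purely topological once the LDP for $\{x_n\}$ is in hand and the compactness of level sets of $I$ is invoked.
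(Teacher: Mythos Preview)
Your proof is correct and is the standard textbook argument for the contraction principle. However, the paper does not actually prove this theorem at all: it is stated in Appendix~\ref{subsec:LGRM} as a known result and immediately followed by a reference to the extended version in \cite{GCW04} and \cite{PuhW97}, with no proof given. So there is nothing in the paper to compare your argument against; you have supplied a self-contained proof where the authors simply invoked the literature.
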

We mention that the continuity of $f$ can be relaxed to having $f$ continuous on the set where the rate function $I$ is finite. This is sometimes referred to as the \textit{extended contraction principle}. See \cite{GCW04} Theorem 4.6 or \cite{PuhW97} Section 3 for details. 

Next, we provide details on several continuous maps on space \( \mathcal{D}_T \). Let \( x \in \mathcal{D}_T \) with \( x(0) \ge 0 \) and \( \theta \in \mathbb{R} \).  
\begin{enumerate}
    \item[(a)] We use \( (\mathcal{R}, \mathcal{R}')(x) \equiv (z,l) \) to denote the conventional Skorokhod reflection mapping of \( x \). The properties of this mapping are well known, see Section 6.2 in \cite{ChenY01} for details. We mention that it is Lipschitz continuous under the uniform topology and can be explicitly expressed as  
    \begin{equation*}
    \begin{aligned}
        l(t) &= \sup_{0 \le s \le t} [-x(s)]^{+}, \\
        z(t) &= x(t) + \sup_{0 \le s \le t} [-x(s)]^{+}. 
    \end{aligned} 
    \end{equation*}
    \item[(b)] We use \( \mathcal{M}_\theta (x) \equiv u \) to denote the solution to the integral equation 
    \begin{equation*}
        u(t) = x(t) - \int_{0}^{t} \theta u(s) \mathrm{d}s. 
    \end{equation*} 
    Lemma 1 in \cite{ReedW04} shows that the solution to such an integral equation exists and is unique, hence \( \mathcal{M}_\theta \) is a well-defined map from \( \mathcal{D}_T \) to \( \mathcal{D}_T \). It also shows that \( \mathcal{M}_\theta \) is Lipschitz continuous under the uniform topology. 
    \item[(c)]  We use \( (\mathcal{R}_\theta, \mathcal{R}'_{\theta})(x) \equiv (z,l) \) to denote the one-dimensional linearly generalized reflection mapping of \( x \). Specifically, for all \( t \in [0,T] \), we have 
    \begin{equation*}
        z(t) = x(t) - \int_{0}^{t} \theta z(s) \mathrm{d}s  + l(t) \ge 0, 
    \end{equation*} 
    with \( l \) being nondecreasing, \( l(0) = 0 \), and \( z(t) \mathrm{d}l(t) = 0 \) for all \( t \in [0,T] \). The multi-dimensional version of this mapping is analyzed in the Appendix of  \cite{ReedW04}. Here we mention that it has the representation: 
    \begin{equation}
        \label{eq:lgrm}
        (\mathcal{R}_\theta, \mathcal{R}'_{\theta}) (x ) = (\mathcal{R},\mathcal{R}') ( \mathcal{U}_{\theta}(x ) ) ,
    \end{equation}
    with $\mathcal{U}_{\theta}(x) = u $ being the solution to the integral equation 
    \begin{equation} \label{eq:lgrm_integral_eqn}
        u(t) = x(t) - \int_{0}^{t} \theta \mathcal{R}(u) (s) ds . 
    \end{equation}
    It is shown in the appendix of \cite{ReedW04} that the map \( \mathcal{U}_{\theta}: \mathcal{D}_T \to  \mathcal{D}_T \) is well-defined and Lipschitz continuous under the uniform topology. Due to  \eqref{eq:lgrm}, it is immediate that the mappings $\mathcal{R}_\theta$ and $ \mathcal{R}'_\theta$ are also Lipschitz continuous under the uniform topology. 
    Finally, we note \( (\mathcal{R}_0, \mathcal{R}'_{0}) \equiv (\mathcal{R}, \mathcal{R}')\). 
\end{enumerate}

\begin{lemma}
    The mappings $ (\mathcal{R},\mathcal{R}') $, $ \mathcal{M}_{\theta} $, $ (\mathcal{R}_{\theta}, \mathcal{R}'_{\theta}) $ are continuous under the $ J_1 $ topology. 
\end{lemma}

\begin{proof} 

    For the conventional one-dimensional Skorokhod reflection mapping $ (\mathcal{R},\mathcal{R}') $, $ J_1 $-continuity is standard. For the mapping $ \mathcal{M}_{\theta} $, Theorem 4.1 of \cite{PangTW07} directly applies. 
    
    It remains to show continuity for the linearly generalized Skorokhod mapping $(\mathcal{R}_{\theta}, \mathcal{R}'_{\theta})$.  Due to \eqref{eq:lgrm}, it suffices to show that the mapping $ \mathcal{U}_{\theta} $ is continuous under the $ J_1 $ topology. Suppose $ d_{J_1}(x_n,x)\to0 $ as $ n \to \infty $ with $ \mathcal{U}_{\theta}(x_{n}) = u_{n} $ and $ \mathcal{U}_{\theta}(x ) = u $. Then there exist  increasing homeomorphisms $ \lambda_{n} $ of $ [0,T] $ such that $ \| x_{n} - x \circ \lambda_{n} \|_{T} \to 0 $ and $ \| \dot{\lambda}_{n} -1 \|_{T} \to 0 $ as $ n \to \infty $. Then we have for some constants $ K_1, K_2 > 0 $, 
    \begin{align*}
        \left| u_{n}(t ) - u( \lambda_{n}(t)) \right| 
        &\le 
        \left| x_{n}(t) - x(\lambda_{n}(t))\right| + \left| \int_{0}^{t} \theta \mathcal{R}(u_{n} )(s) \mathrm{d}s - \int_{0}^{\lambda_{n}(t)}  \theta \mathcal{R}(u ) (s) \mathrm{d}s \right| \\
        & \le 
         \left| x_{n}(t) - x(\lambda_{n}(t))\right| +  \int_{0}^{t} \left|  \theta \mathcal{R}(u )(\lambda_{n}(s)) \right|  \vert 1- \dot{\lambda}_{n}(s  ) \vert  \mathrm{d}s\\
        & \qquad \qquad   +   \int_{0}^{t} \left| \theta \mathcal{R}(u_{n} )(s) - \theta \mathcal{R}(u )(\lambda_{n}(s)) \right| \mathrm{d}s  \\
        & \le 
        \| x_{n} - x \circ \lambda_{n} \|_{T} + K_1 T \| 1- \dot{\lambda}_{n} \|_{T} +  \int_{0}^{t} K_2  \vert  u_{n}(s) - u(\lambda_{n}(s)) \vert  \mathrm{d}s \,.   
    \end{align*}
    In the third inequality above, the constant $ K_1 $ is due to $ \theta \mathcal{R}(u) $ being bounded on the compact interval $ [0,T] $. The constant $ K_2 $ comes from the fact that $ \mathcal{R} $ is Lipschitz and that $ \mathcal{R}(u) \circ \lambda_{n} = \mathcal{R}(u \circ \lambda_{n}) $. 
    Then by Gronwall's inequality, 
    \begin{align*}
        \| u_{n}  - u \circ \lambda_{n} \|_{T} \le 
        \left( \| x_{n} - x \circ \lambda_{n} \|_{T} + K_1 T \| 1- \dot{\lambda}_{n} \|_{T} \right) e^{K_2T}. 
    \end{align*}
    This implies that \(u_n\to u\) in the \(J_1\) topology as $ n \to \infty $ and hence $ \mathcal{U}_{\theta} $ is continuous under the $ J_1 $ topology. 

\end{proof}

Here is a comparison result for the conventional Skorokhod mapping. 

\begin{lemma} \label{lem:Skorokhod_comparison}
    Let \( x,y \in \mathcal{D} \). Suppose \( y \) is a positive, nondecreasing process and let
    \begin{equation*}
    \begin{aligned}
        (z,l) &= (\mathcal{R},\mathcal{R}')(x+y), \\
        (z',l') &= (\mathcal{R},\mathcal{R}')(x).
    \end{aligned}
    \end{equation*}
    Then \( z \ge z' \) for all \( t \ge 0 \). 
\end{lemma}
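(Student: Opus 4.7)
The plan is to reduce everything to the explicit formula for the one-dimensional Skorokhod reflection mapping recalled in Appendix \ref{subsec:LGRM}(a), and then establish the inequality pointwise in $t$ via a short bound that exploits monotonicity of $y$.

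Write out the two reflections explicitly. From the formula
\[
    \mathcal{R}(\phi)(t) = \phi(t) + \sup_{0\le s\le t} [-\phi(s)]^+,
\]
applied once to $x+y$ and once to $x$, we get
\begin{align*}
    z(t) &= x(t) + y(t) + \sup_{0\le s\le t} [-(x(s)+y(s))]^+, \\
    z'(t) &= x(t) + \sup_{0\le s\le t} [-x(s)]^+.
\end{align*}
Thus $z(t) \ge z'(t)$ is equivalent to
\[
    y(t) + \sup_{0\le s\le t} [-(x(s)+y(s))]^+ \ge \sup_{0\le s\le t} [-x(s)]^+.
\]

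The key pointwise step is the elementary observation that, for any $s \in [0,t]$,
\[
    -x(s) = -(x(s)+y(s)) + y(s) \le [-(x(s)+y(s))]^+ + y(s) \le [-(x(s)+y(s))]^+ + y(t),
\]
where the first inequality uses $a \le a^+$ and the second uses that $y$ is nondecreasing. Since $y(t) \ge 0$ and $[-(x(s)+y(s))]^+ \ge 0$, the right-hand side is also nonnegative, so we obtain
\[
    [-x(s)]^+ \le [-(x(s)+y(s))]^+ + y(t)
\]
for every $s \in [0,t]$. Taking the supremum in $s$ on the left and using the (uniform in $s$) bound $y(t)$ on the right yields the required inequality, completing the proof.

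The only potential subtlety is the convention that ``positive'' here means $y \ge 0$, which is what the explicit bound above uses; otherwise the argument is purely a pointwise manipulation and no delicate step is expected to be the main obstacle.
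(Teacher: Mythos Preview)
Your proof is correct and follows essentially the same approach as the paper: both reduce to the explicit Skorokhod formula and establish the pointwise inequality $[-x(s)]^+ \le [-(x(s)+y(s))]^+ + y(t)$ using nonnegativity and monotonicity of $y$, then take the supremum over $s$. The only cosmetic difference is that the paper first proves $y(s) + [-x(s)-y(s)]^+ \ge [-x(s)]^+$ by cases and then uses $y(t) \ge y(s)$ to pull $y(t)$ outside the sup, whereas you apply $a \le a^+$ and the monotonicity of $y$ in the opposite order; the content is the same.
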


\begin{proof}
    
    Let \( s \ge 0 \). By breaking down each case, it is straightforward to see that  
    \[
        y(s) + [-x(s)-y(s)]^{+}  \ge  [-x(s)]^{+}.
    \]
    Then using the fact that \( y \) is nondecreasing, we have
    \begin{align*}
        z(t) = x(t) + y(t) + l(t) &= x(t) + y(t) + \sup_{s \le t}[-x(s)-y(s)]^{+} \\
        &\ge  x(t) + \sup_{s \le t} \left( y(s) + [-x(s)-y(s)]^{+} \right) \\
        &\ge x(t) + \sup_{s \le t} [-x(s)]^{+} = x(t) + l(t) = z'(t). 
    \end{align*}
    This concludes the proof.  
\end{proof}

\subsection{Exponential Tightness} \label{subsec:exp_tightness}

Recall that for any $x \in \mathcal{C}_T$ and $\delta \in [0,T]$, we define the modulus of continuity as
\[
	w_T(x,\delta) \equiv \sup_{ \substack{|s-t| < \delta\\ 0\le s,t \le T}} |x(s) - x(t) | ,
\]
which is used to prove tightness in $\mathcal{C}_T$. 
For a function $x = \{x(t), t \ge 0\} \in \mathcal{D}_T$, let  
\begin{equation} 
	w_x	[s,t) \equiv \sup_{s \le u, v < t} | x(u) - x(v) | , \quad s < t, 
\end{equation}
and then, for $T>0$, $\delta>0$,  define the following notion of ``modulus of continuity":
\begin{equation}
    \label{eq:mod-cont-D}
    w'_T (x, \delta) \equiv \inf_{  \{t_j\}} \max_{0 < j \le k} w_x [t_{j-1},t_j),
\end{equation}
where $\{t_j\}_{j=0, 1, \ldots, k}$ are finite partitions of $[0,T]$ such that $t_j - t_{j-1} > \delta$, for all $j = 1, \ldots, k$. 

We restate the following necessary and sufficient condition from \cite{Puh91} Theorem 4.2 for exponential tightness of probability measures in space $\mathcal{D}_T$ with Skorokhod $J_1$ topology.

\begin{theorem}
	\label{thm:exp_tightness_D}
	A family  of processes $(x_n)_{n \in \mathbb{N}}$ on $(\mathcal{D}_T, J_1)$ is exponentially tight with rate $a_n$ if and only if: 
	\begin{enumerate}
		\item[(i)] We have 
			\begin{equation}
                \label{lem_exp_tightness_char_1}
				\lim_{A \to \infty} \limsup_{n \to \infty} \frac{1}{a_n}  \log P \left(  \sup_{0 \le t \le T} |x_{n}(t)| \ge A \right) = - \infty. 
			\end{equation}
		\item[(ii)] For any $\eta > 0$,   
			\begin{equation}
                \label{lem_exp_tightness_char_2}
				\lim_{\delta \to 0} \limsup_{n \to \infty} \frac{1}{a_n} \log P (  w'_T(x_n,\delta) \ge \eta) = -\infty .
			\end{equation}
	\end{enumerate}
\end{theorem}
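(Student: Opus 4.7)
The plan is to deduce this theorem from the classical Arzel\`a--Ascoli/Prokhorov characterization of relatively compact subsets of $(\mathcal{D}_T, J_1)$: a set $K \subset \mathcal{D}_T$ is relatively compact if and only if $\sup_{x \in K} \|x\|_T < \infty$ and $\lim_{\delta \to 0} \sup_{x \in K} w'_T(x, \delta) = 0$ (Billingsley, \emph{Convergence of Probability Measures}, Thm.~12.3). Conditions (i) and (ii) are the exponential upgrades of these two ingredients, and the argument in each direction mirrors the classical Prokhorov proof combined with a Borel--Cantelli-style union bound familiar from moderate/large deviation theory.

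For the sufficiency direction, I would fix $\alpha > 0$ and, using (i), choose $A_\alpha$ such that $\limsup_n a_n^{-1} \log \mathbb{P}(\|x_n\|_T > A_\alpha) < -(\alpha+1)$. By (ii), for every $k \in \mathbb{N}$ I would pick $\delta_k > 0$ with $\limsup_n a_n^{-1} \log \mathbb{P}(w'_T(x_n, \delta_k) \ge 1/k) < -(\alpha+k+1)$. Set
\[
    K_\alpha = \big\{ x \in \mathcal{D}_T : \|x\|_T \le A_\alpha,\ w'_T(x, \delta_k) \le 1/k \text{ for every } k \in \mathbb{N} \big\},
\]
and take its $J_1$-closure $\bar{K}_\alpha$; by the classical criterion $\bar{K}_\alpha$ is compact. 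A union bound together with Remark \ref{rmk:analysis_facts}(a) (extended to geometrically decaying countable sums) yields
\[
    \limsup_{n \to \infty} \frac{1}{a_n} \log \mathbb{P}(x_n \notin \bar{K}_\alpha) \le -\alpha,
\]
which is exponential tightness.

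The necessity direction is the easier half. Given exponential tightness and $\alpha > 0$, let $K_\alpha$ be the guaranteed compact set. By the classical compactness criterion, $M_\alpha := \sup_{x \in K_\alpha} \|x\|_T < \infty$ and for any $\eta > 0$ there exists $\delta_\alpha > 0$ with $w'_T(x, \delta_\alpha) < \eta$ for all $x \in K_\alpha$. Hence $\{\|x_n\|_T > M_\alpha\} \cup \{w'_T(x_n, \delta_\alpha) \ge \eta\} \subseteq \{x_n \notin K_\alpha\}$, and letting $\alpha \to \infty$ gives both \eqref{lem_exp_tightness_char_1} and \eqref{lem_exp_tightness_char_2}.

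The main obstacle is the construction step in sufficiency, where some care is needed because the modulus $w'_T(\cdot, \delta)$ is not jointly continuous in $(x, \delta)$ under $J_1$: one must argue that $\{x_n \notin \bar{K}_\alpha\}$ can be majorized by the countable collection of events $\{\|x_n\|_T > A_\alpha\}$ and $\{w'_T(x_n, \delta_k) > 1/k\}$, which is done in Puhalskii's original proof by slightly shrinking each $\delta_k$ and exploiting monotonicity of $\delta \mapsto w'_T(x, \delta)$. A secondary technical point is measurability of $K_\alpha$, which follows from the right-continuity of $\delta \mapsto w'_T(x, \delta)$ in $\delta$ on a countable dense set of arguments.
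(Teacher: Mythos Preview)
The paper does not supply its own proof of this theorem: it is stated in the appendix with the preface ``We restate the following necessary and sufficient condition from \cite{Puh91} Theorem 4.2,'' and no argument is given. So there is no paper proof to compare against; the authors treat this as a known black-box result.

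Your sketch is the standard Prokhorov-style argument adapted to the exponential scale, and it is essentially what one finds in Puhalskii's original paper (and in later expositions such as Feng--Kurtz). The sufficiency construction --- build a candidate compact set from the Arzel\`a--Ascoli/Billingsley characterization of relative compactness in $(\mathcal{D}_T, J_1)$, then control the complement by a geometric union bound --- is correct, as is the easy necessity direction. The technical caveats you flag (monotonicity of $\delta \mapsto w'_T(x,\delta)$ to pass between $K_\alpha$ and the defining events, and measurability) are the right ones and are handled in \cite{Puh91} exactly as you outline. Nothing is missing from your plan; it would be accepted as a self-contained proof if the paper had chosen to include one.
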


We use the following lemma to check \(\mathcal{C}\)-exponential tightness. This is a special case of Theorem A.3 in \cite{Puh23}.   
\begin{theorem}
    \label{thm:C_exp_tightness_in_D}
    A family of processes \( \{ x_n \}_{n \in \mathbb{N}} \) on \(  \mathcal{D}_T \) is \( \mathcal{C} \)-exponentially tight with rate \( a_n \) if 
    \begin{enumerate}
        \item [(i)] For every \( t \in [0,T] \), the family of random variables \( \{ x_n(t) \}_{n \in \mathbb{N}} \) is exponentially tight with rate \( a_n \). That is, for all $\alpha > 0$, there exists some $K_\alpha > 0$ such that 
        \begin{equation*}
            \limsup_{n \to \infty} \frac{1}{a_n}  \log P \left(   |x_{n}(t)| > K_\alpha \right) < - \alpha. 
        \end{equation*}
        \item [(ii)] For every \( \epsilon > 0 \), we have 
        \begin{equation*}
            \lim_{\delta \to 0} \limsup_{n \to \infty} \sup_{t \in [0,T]} \frac{1}{a_n} \log \mathbb{P} \left(  \sup_{\substack{s \in [0,\delta], \\ t+s \le T}} \vert x_n(t+s) - x_n(t) \vert > \epsilon \right) = - \infty.  
        \end{equation*}
    \end{enumerate} 
\end{theorem}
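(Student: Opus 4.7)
The plan is to verify the three defining ingredients of \(\mathcal{C}\)-exponential tightness: conditions (i) and (ii) of Theorem \ref{thm:exp_tightness_D}, together with the uniform jump bound
\[
\limsup_{n \to \infty} \frac{1}{a_n} \log \mathbb{P}\Bigl(\sup_{t \in [0,T]} |x_n(t) - x_n(t-)| > \eta\Bigr) = -\infty, \qquad \forall \eta > 0.
\]
The common device is, for any fixed \(\delta > 0\), to use the partition \(0 = t_0 < t_1 < \cdots < t_m = T\) with \(t_j - t_{j-1} \le \delta\) and \(m = \lceil T/\delta \rceil\) (a number depending only on \(\delta\), not on \(n\)), to control fluctuations on each subinterval via hypothesis (ii), to anchor values at the breakpoints via hypothesis (i), and to combine the resulting finitely many terms using the log-subadditivity fact in Remark \ref{rmk:analysis_facts}(a).

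For the sup condition, I would fix \(\alpha > 0\) and apply hypothesis (ii) with \(\varepsilon = 1\) to obtain some \(\delta_0 > 0\) and \(n_0\) so that \(\mathbb{P}(\sup_{s \in [0,\delta_0]}|x_n(t+s) - x_n(t)| > 1) \le e^{-a_n(\alpha+1)}\) for every \(t \in [0,T]\) and \(n \ge n_0\). Take a partition of \([0,T]\) of mesh \(\delta_0\). Hypothesis (i) supplies constants \(K_0, \ldots, K_m\) with \(\limsup_n a_n^{-1}\log \mathbb{P}(|x_n(t_j)| > K_j) < -\alpha\). Setting \(K_\alpha := \max_j K_j + 1\), the event \(\{\|x_n\|_T > K_\alpha\}\) forces either some \(|x_n(t_j)| > K_j\) or some subinterval fluctuation to exceed \(1\); a union bound over the \(m+1\) terms combined with Remark \ref{rmk:analysis_facts}(a) produces \(\limsup_n a_n^{-1}\log\mathbb{P}(\|x_n\|_T > K_\alpha) < -\alpha\), and sending \(\alpha \to \infty\) yields condition (i) of Theorem \ref{thm:exp_tightness_D}.

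For the modulus and jump conditions, I would use the partition of \([0,T]\) into intervals of length \(2\delta\) and the elementary pointwise bounds
\[
w'_T(x_n, \delta) \le 2 \max_{1 \le j \le m} \sup_{s \in [0, 2\delta]} |x_n(t_{j-1} + s) - x_n(t_{j-1})|,
\]
\[
\sup_{t \in [0,T]} |x_n(t) - x_n(t-)| \le 2 \max_{1 \le j \le m} \sup_{s \in [0, 2\delta]} |x_n(t_{j-1} + s) - x_n(t_{j-1})|.
\]
The second bound uses that for \(t \in (t_{j-1}, t_j]\), both \(|x_n(t) - x_n(t_{j-1})|\) and \(|x_n(t-) - x_n(t_{j-1})| = \lim_{s \uparrow t}|x_n(s) - x_n(t_{j-1})|\) are dominated by the subinterval supremum; jumps at \(t_{j-1}\) itself are handled by the previous subinterval. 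A union bound over the \(m\) subintervals, hypothesis (ii), Remark \ref{rmk:analysis_facts}(a), and finally sending \(\delta \to 0\) verify both remaining conditions.

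The main obstacle is purely bookkeeping: one must ensure every union bound is over a number of terms depending on \(\delta\) but not on \(n\), so that \(a_n^{-1}\log(m) \to 0\) and the \(\limsup\) of \(a_n^{-1}\log\) of a maximum coincides with the maximum of the individual \(\limsup\)'s. The uniformity in \(t\) of the \(\limsup\) in hypothesis (ii) is precisely what makes this combination succeed; without it, the finitely many endpoints in the partition would not suffice.
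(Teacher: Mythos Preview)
The paper does not actually prove this statement; it simply records it as a special case of Theorem~A.3 in \cite{Puh23}. Your proposal instead supplies a direct elementary argument, and the outline is correct: partition $[0,T]$ into finitely many subintervals of length about $2\delta$, anchor the values at the breakpoints using hypothesis~(i), control the oscillation on each subinterval using hypothesis~(ii), and combine the finitely many terms via Remark~\ref{rmk:analysis_facts}(a), exploiting that the number $m$ of subintervals depends on $\delta$ but not on $n$ so that $a_n^{-1}\log m \to 0$. The only cosmetic items to tidy are that the last subinterval may need to be merged with its neighbor to keep all subinterval lengths strictly above $\delta$ (so that the partition is admissible in the infimum defining $w'_T$), and that after using intervals of length $2\delta$ one invokes hypothesis~(ii) with $2\delta$ in place of $\delta$, which is harmless since one sends $\delta \to 0$ at the end. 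Your remark about the role of the uniform-in-$t$ supremum inside the $\limsup_n$ in hypothesis~(ii) is exactly the point that makes the finitely-many-breakpoints reduction work.
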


The next lemma says that exponential tightness is preserved under continuous maps. 

\begin{lemma}
    \label{lem:exp_tightness_cts_map}
    Let \( \mathcal{X}, \mathcal{X}'\) be Polish spaces and the map \( h: \mathcal{X} \to \mathcal{X}' \) be continuous. Suppose that the family of random elements \( \{ x_n, n \in \mathbb{N} \} \) is exponentially tight in \( \mathcal{X} \) with rate \( a_n \). Then the family \( \{ h(x_n),\ n \in \mathbb{N} \} \) is exponentially tight in \( \mathcal{X}' \) with rate \( a_n \). 
\end{lemma}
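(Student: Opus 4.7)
The plan is to proceed directly from the definition of exponential tightness by transferring a compact exhaustion in $\mathcal{X}$ to one in $\mathcal{X}'$ via the map $h$. Given $\alpha \ge 0$, I would use the exponential tightness of $\{x_n\}$ to choose a compact set $K_\alpha \subset \mathcal{X}$ satisfying
\[
    \limsup_{n \to \infty} \frac{1}{a_n} \log \mathbb{P}(x_n \notin K_\alpha) < -\alpha,
\]
and then define the candidate compact set in $\mathcal{X}'$ to be $K'_\alpha := h(K_\alpha)$.

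The key step is to verify that $K'_\alpha$ is indeed compact and that its pre-image controls the tail probability of $h(x_n)$. Compactness of $K'_\alpha$ follows from the fact that the continuous image of a compact set in a Polish (hence Hausdorff) space is compact. For the tail bound, I would observe the set inclusion
\[
    \{h(x_n) \notin K'_\alpha\} = \{h(x_n) \notin h(K_\alpha)\} \subset \{x_n \notin K_\alpha\},
\]
since $x_n \in K_\alpha$ forces $h(x_n) \in h(K_\alpha) = K'_\alpha$. Monotonicity of probability and the logarithm then yields
\[
    \limsup_{n \to \infty} \frac{1}{a_n} \log \mathbb{P}(h(x_n) \notin K'_\alpha) \le \limsup_{n \to \infty} \frac{1}{a_n} \log \mathbb{P}(x_n \notin K_\alpha) < -\alpha,
\]
which is exactly the definition of exponential tightness for $\{h(x_n)\}$ with rate $a_n$.

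There is no genuine obstacle here: the argument is essentially bookkeeping on the definition, with the one substantive fact being that continuous maps preserve compactness. The only mild subtlety worth flagging is that $K'_\alpha$ need not be a nice or canonical set in $\mathcal{X}'$ (e.g.\ it need not lie in any prescribed family of compacts), but compactness is all the definition requires.
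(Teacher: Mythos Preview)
Your proof is correct and in fact more elementary than the argument given in the paper. You work directly from the definition of exponential tightness, pushing the compact set $K_\alpha$ forward through $h$ and using the trivial inclusion $\{h(x_n)\notin h(K_\alpha)\}\subset\{x_n\notin K_\alpha\}$. The paper instead invokes Puhalskii's Theorem~(P), which characterizes exponential tightness in Polish spaces as the \emph{partial LDP} property (every subsequence has a further subsequence obeying an LDP), and then applies the contraction principle to each such sub-subsequence. Your route is shorter and requires only the one topological fact that continuous images of compact sets are compact; the paper's route is less direct but fits naturally with the LDP machinery already in play and makes the connection to the contraction principle explicit. Either argument is perfectly adequate here.
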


\begin{proof}
    For Polish spaces, by Theorem (P) in \cite{Puh91}, exponential tightness is equivalent to partial LDP. 
    That is, for each subsequence \( \{ n' \} \) of \( \{ n \} \), there exists a further subsequence \( \{ n'' \} \) of \( \{ n' \} \) such that the family \( \{ x_{n''} \} \) obeys an LDP. By the contraction principle, \( \{ h(x_{n''}) \} \) also obeys an LDP, therefore \( \{ h(x_n) \} \) is exponentially tight.
\end{proof}

\subsection{Super-exponential Convergence in Probability} 
\label{subsec:sup_exp_cvg_prob}

A detailed study can be found in \cite{PuhW97}. We first state a useful result taken from that reference, which is a characterization of super-exponential convergence in probability when the limit is deterministic and  continuous. 

\begin{lemma}
    \label{lem:supexp_char}
    Let $x_0 \equiv (x_0 (t), \ t \ge 0)$ be continuous. Then $X_n \stackrel{P^{1/a_n}}{\longrightarrow} x_0$ if and only if 
    \begin{equation}
        \limsup_{n \to \infty} \frac{1}{a_n} \log \mathbb{P} \left( \sup_{t \in [0,T]} | X_n(t) - x_0(t)  | > \epsilon  \right) = - \infty, 
    \end{equation}
    for all $\epsilon>0$, $T>0$. 
\end{lemma}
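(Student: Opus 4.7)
The plan is to prove the equivalence by comparing the $J_1$ Skorokhod metric $d$ on $\mathcal{D}_T$ with the uniform metric $\|\cdot\|_T$, and to exploit the fact that continuity of $x_0$ on the compact interval $[0,T]$ gives uniform continuity. Recall that
\[
d(x,y) = \inf_{\lambda \in \Lambda}\bigl(\|\lambda - \mathfrak{e}\|_T \vee \|x - y \circ \lambda\|_T\bigr),
\]
where $\Lambda$ denotes the strictly increasing continuous bijections of $[0,T]$.

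For the ``if'' direction (uniform super-exponential bound implies the definition of $X_n \stackrel{P^{1/a_n}}{\longrightarrow} x_0$), I would simply take $\lambda = \mathfrak{e}$ in the infimum defining $d$, which shows $d(X_n, x_0) \le \|X_n - x_0\|_T$. Hence $\{d(X_n,x_0) > \delta\} \subseteq \{\|X_n - x_0\|_T > \delta\}$, and the bound on the probability of the uniform event transfers directly to the $J_1$-event.

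For the ``only if'' direction, I would use uniform continuity of $x_0$. Let $\eta > 0$. On the event $\{d(X_n, x_0) < \eta\}$, pick $\lambda_n \in \Lambda$ with $\|\lambda_n - \mathfrak{e}\|_T < \eta$ and $\|X_n - x_0 \circ \lambda_n\|_T < \eta$. Then
\[
\|X_n - x_0\|_T \le \|X_n - x_0 \circ \lambda_n\|_T + \|x_0 \circ \lambda_n - x_0\|_T \le \eta + w(x_0, \eta),
\]
where $w(x_0, \cdot)$ is the modulus of continuity of $x_0$ on $[0,T]$. Since $x_0$ is continuous on the compact interval $[0,T]$, it is uniformly continuous, so $w(x_0, \eta) \to 0$ as $\eta \downarrow 0$. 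Given $\epsilon > 0$, choose $\eta > 0$ small enough that $\eta + w(x_0, \eta) \le \epsilon$. Then
\[
\mathbb{P}\bigl(\|X_n - x_0\|_T > \epsilon\bigr) \le \mathbb{P}\bigl(d(X_n, x_0) \ge \eta\bigr),
\]
and taking $a_n^{-1}\log(\cdot)$ followed by $\limsup_{n\to\infty}$ yields the uniform super-exponential estimate from the $J_1$-version.

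The only nontrivial point, and hence the ``main obstacle,'' is the passage from $J_1$-closeness to uniform-closeness: a small time distortion $\lambda_n$ could in principle produce a non-vanishing uniform discrepancy, and this is precisely ruled out by uniform continuity of $x_0$. No concentration or large deviation machinery beyond the definitions is required; the argument is topological and the probabilistic content is only in the monotonicity of $\mathbb{P}(\cdot)$ across the containment of events.
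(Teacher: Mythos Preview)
Your argument is correct. The paper does not actually supply its own proof of this lemma; it merely cites it as a result taken from \cite{PuhW97} (stated in the appendix without proof). So there is no ``paper's proof'' to compare against in detail. What you have written is exactly the standard self-contained justification: the inequality $d(x,y)\le\|x-y\|_T$ handles one direction trivially, and uniform continuity of the deterministic continuous limit $x_0$ on the compact interval $[0,T]$ converts $J_1$-closeness back into uniform closeness for the other direction. The only cosmetic point is that in the final inclusion you obtain $\{\|X_n-x_0\|_T>\epsilon\}\subseteq\{d(X_n,x_0)\ge\eta\}$ with a non-strict inequality, whereas the definition of super-exponential convergence uses a strict one; this is harmless since $\{d\ge\eta\}\subseteq\{d>\eta/2\}$, or you may simply choose $\eta$ so that $\eta+w(x_0,\eta)<\epsilon$ strictly.
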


The next lemma is a weaker version of Lemma 4.2 (b) in \cite{PuhW97}. 
\begin{lemma} \label{lem:exptight_fluid0}
    Let \( \{ c_n, \ n\ge 1 \} \) be a sequence such that \( c_n \to  \infty \) and \( \{ x_n,\ n \ge 1 \} \) be a family of processes on \( \mathcal{D}_T \). Suppose \( \{ c_n x_n,\ n\ge 1\} \) is exponentially tight on $\mathcal{D}_T$ with rate \( a_n \). Then, 
    \begin{equation*}
        x_n \stackrel{P^{1/a_n }}{\longrightarrow} 0. 
    \end{equation*}
\end{lemma}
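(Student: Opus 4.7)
The plan is to reduce the super-exponential convergence statement to a uniform estimate via Lemma \ref{lem:supexp_char}, then extract that estimate from exponential tightness of $\{c_n x_n\}$ by exploiting the fact that $c_n \uparrow \infty$.

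First, since the zero process is continuous on $[0,T]$, Lemma \ref{lem:supexp_char} tells us that $x_n \stackrel{P^{1/a_n}}{\longrightarrow} 0$ is equivalent to showing, for every $\delta > 0$,
\[
    \limsup_{n \to \infty} \frac{1}{a_n} \log \mathbb{P}\!\left( \sup_{t \in [0,T]} |x_n(t)| > \delta \right) = -\infty.
\]
So the task reduces to establishing this uniform tail estimate for arbitrary $\delta > 0$.

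Next, exponential tightness of $\{c_n x_n\}$ in $\mathcal{D}_T$ with rate $a_n$ implies, via Theorem \ref{thm:exp_tightness_D}(i), the uniform boundedness condition
\[
    \lim_{A \to \infty} \limsup_{n \to \infty} \frac{1}{a_n} \log \mathbb{P}\!\left( \sup_{t \in [0,T]} |c_n x_n(t)| \ge A \right) = -\infty.
\]
Fix $\delta > 0$ and $\alpha > 0$. By the display above, we may choose $A_\alpha > 0$ such that
\[
    \limsup_{n \to \infty} \frac{1}{a_n} \log \mathbb{P}\!\left( \sup_{t \in [0,T]} |c_n x_n(t)| \ge A_\alpha \right) < -\alpha.
\]
Since $c_n \to \infty$, for all $n$ sufficiently large we have $c_n \delta \ge A_\alpha$, and hence
\[
    \mathbb{P}\!\left( \sup_{t \in [0,T]} |x_n(t)| > \delta \right)
    = \mathbb{P}\!\left( \sup_{t \in [0,T]} |c_n x_n(t)| > c_n \delta \right)
    \le \mathbb{P}\!\left( \sup_{t \in [0,T]} |c_n x_n(t)| \ge A_\alpha \right).
\]
Taking $\limsup_{n\to\infty} \frac{1}{a_n} \log$ on both sides yields
\[
    \limsup_{n \to \infty} \frac{1}{a_n} \log \mathbb{P}\!\left( \sup_{t \in [0,T]} |x_n(t)| > \delta \right) < -\alpha.
\]
Since $\alpha > 0$ was arbitrary, the limsup is $-\infty$, which is precisely the uniform estimate needed. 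Invoking Lemma \ref{lem:supexp_char} closes the argument.

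There is no real obstacle here: the only thing to be careful about is that exponential tightness in $(\mathcal{D}_T, J_1)$ is used through its uniform-sup consequence \eqref{lem_exp_tightness_char_1} (not through the modulus condition \eqref{lem_exp_tightness_char_2}, which is not needed), and that one is free to invoke Lemma \ref{lem:supexp_char} because the target process, $0$, is continuous, so $J_1$-convergence to it coincides with uniform convergence.
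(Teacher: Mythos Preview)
Your proposal is correct and follows essentially the same approach as the paper: both reduce to the uniform-sup characterization in Lemma~\ref{lem:supexp_char}, extract a level $K_\alpha$ (your $A_\alpha$) from condition~\eqref{lem_exp_tightness_char_1} of Theorem~\ref{thm:exp_tightness_D}, and then use $c_n\to\infty$ to trap $\mathbb{P}(\|x_n\|_T>\delta)$ under $\mathbb{P}(\|c_n x_n\|_T > K_\alpha)$ for large $n$. Your inequality $c_n\delta \ge A_\alpha$ is in fact slightly cleaner than the paper's intermediate $c_{n_0}$ step.
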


\begin{proof}
    Let \( \alpha > 0 \). By the characterization of exponential tightness in Theorem \ref{thm:exp_tightness_D}, we can find \( K_\alpha \) such that 
    \begin{equation*}
        \limsup_{n \to \infty} \frac{1}{a_n} \log \mathbb{P} \left( \left\| c_n x_n \right\|_{T} > K_\alpha \right) < - \alpha. 
    \end{equation*}
    Now let \( \epsilon >0 \). Since \( c_n \to \infty  \), we can find \( n_0  \) such that \( \epsilon > K_\alpha / c_{n_0}  \). Then for all \( n > n_0 \), 
    \begin{equation*}
         \mathbb{P} \left( \| x_n \|_T  > \epsilon \right) \le \mathbb{P} \left( c_{n_0} \|  x_n \|_T  > K_{\alpha} \right) \le \mathbb{P} \left(  \|c_n x_n \|_T  > K_{\alpha} \right). 
    \end{equation*}
    Therefore,
    \begin{equation*}
         \limsup_{n \to \infty}
        \frac{1}{a_n} \log \mathbb{P} \left( \left\|  x_n \right\|_{T} > \epsilon \right) 
        \le  \limsup_{n \to \infty} \frac{1}{a_n} \log \mathbb{P} \left( \|c_n x_n \|_T  > K_{\alpha} \right) < -\alpha.  
    \end{equation*}
    Since \( \alpha \) is taken arbitrarily, we can take \( \alpha \to \infty \) and conclude the proof.  
\end{proof}

The following result can be seen as an analog of the random time-change theorem in \cite{ChenY01} Theorem 5.3. It describes when a process is exponentially equivalent to itself after performing a random time-change. 
For a proof, see \cite{FHP25} Theorem A.4. 
\begin{theorem}[Random time-change]
	\label{thm:time_change}
	Suppose that the processes $\{y^{n},\ n\in \mathbb{N}\} \subset \mathcal{D}_T$ satisfy $y^{n} \stackrel{P^{1/a_n}}{\longrightarrow} \mathfrak{e}$ and the family of processes $\{X^{n},\ n\in \mathbb{N}\} \subset \mathcal{D}_T$ is $\mathcal{C}$-exponentially tight with rate $a_n$, then 
    \begin{equation*}
        X^{n} - X^{n} \circ y^{n} \stackrel{P^{1/a_n}}{\longrightarrow} 0. 
    \end{equation*}
\end{theorem}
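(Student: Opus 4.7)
The plan is to bound $\|X^n - X^n \circ y^n\|_T$ pathwise by a uniform modulus of continuity of $X^n$ evaluated at the random shift $\|y^n - \mathfrak{e}\|_T$, and then make both quantities super-exponentially small. Define, for $x \in \mathcal{D}_T$ and $\delta > 0$,
$$\omega''(x, \delta) := \sup\bigl\{|x(s) - x(t)|:\ s, t \in [0,T],\ |s - t| \le \delta\bigr\}.$$
Since $|y^n(t) - t| \le \|y^n - \mathfrak{e}\|_T$ for every $t \in [0, T]$, we have $\|X^n - X^n \circ y^n\|_T \le \omega''(X^n, \|y^n - \mathfrak{e}\|_T)$, and therefore for any $\epsilon, \delta > 0$,
$$\mathbb{P}\bigl(\|X^n - X^n \circ y^n\|_T > \epsilon\bigr) \le \mathbb{P}\bigl(\omega''(X^n, \delta) > \epsilon\bigr) + \mathbb{P}\bigl(\|y^n - \mathfrak{e}\|_T > \delta\bigr).$$
The second term is super-exponentially small for each fixed $\delta > 0$ by Lemma \ref{lem:supexp_char} applied to the hypothesis $y^n \stackrel{P^{1/a_n}}{\longrightarrow} \mathfrak{e}$, so the whole argument reduces to controlling the first term by a suitable choice of $\delta$.

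Because $X^n$ is c\`adl\`ag rather than continuous, $\omega''(X^n, \delta)$ need not vanish as $\delta \to 0$; it is bounded below by the largest jump. I would separate out that jump contribution using the deterministic inequality
$$\omega''(x, \delta) \le 2\, w'_T(x, \delta) + J(x), \qquad J(x) := \sup_{t \in [0,T]} |x(t) - x(t-)|,$$
which follows from noting that any two points $s < t$ with $t - s \le \delta$ must lie in one or in two consecutive blocks of any partition with minimal spacing exceeding $\delta$, so in the worst case a triangle inequality at the common endpoint produces two block oscillations (each bounded by $w'_T(x, \delta)$ up to an arbitrary $\eta > 0$) plus one jump of size at most $J(x)$. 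A union bound then gives
$$\mathbb{P}\bigl(\omega''(X^n, \delta) > \epsilon\bigr) \le \mathbb{P}\bigl(w'_T(X^n, \delta) > \epsilon/4\bigr) + \mathbb{P}\bigl(J(X^n) > \epsilon/2\bigr).$$

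These are exactly the two quantities that $\mathcal{C}$-exponential tightness is designed to control: the jump term is super-exponentially small by the very definition of $\mathcal{C}$-exponential tightness, while the first term is super-exponentially small once $\delta$ is chosen small enough, by Theorem \ref{thm:exp_tightness_D}(ii). Concretely, for any target rate $M > 0$ we first select $\delta_0 = \delta_0(M, \epsilon) > 0$ via Theorem \ref{thm:exp_tightness_D}(ii); the combined bound then yields $\limsup_n a_n^{-1} \log \mathbb{P}(\|X^n - X^n \circ y^n\|_T > \epsilon) \le -M$, and letting $M \to \infty$ and invoking Lemma \ref{lem:supexp_char} one last time completes the argument. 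I expect the main obstacle to be the deterministic inequality $\omega''(x, \delta) \le 2\, w'_T(x, \delta) + J(x)$ together with the associated partition bookkeeping; a secondary issue is that $y^n(t)$ may occasionally leave $[0, T]$, which can be handled either by extending $X^n$ as a constant outside $[0, T]$ or by absorbing such excursions into the super-exponentially negligible event $\{\|y^n - \mathfrak{e}\|_T > \delta\}$.
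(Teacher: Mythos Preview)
The paper does not actually prove this theorem; it simply states the result and refers the reader to \cite{FHP25}, Theorem~A.4, for a proof. There is therefore no in-paper argument to compare yours against.

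That said, your approach is standard and correct. The pathwise bound $\|X^n - X^n\circ y^n\|_T \le \omega''(X^n,\|y^n-\mathfrak{e}\|_T)$ together with the decomposition $\omega''(x,\delta)\le 2\,w'_T(x,\delta)+J(x)$ is exactly the right device: two points within $\delta$ of each other can straddle at most one partition point when the mesh exceeds $\delta$, which gives the factor $2$ plus a single jump. The two resulting terms are then controlled precisely by the two ingredients of $\mathcal{C}$-exponential tightness (condition~(ii) of Theorem~\ref{thm:exp_tightness_D} for the $w'_T$ part, and the maximal-jump condition in the definition for $J(X^n)$). Your remark about $y^n(t)$ possibly leaving $[0,T]$ is a genuine but minor technicality, and either of your proposed fixes works. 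In short, your proposal would serve as a valid self-contained proof where the paper offers only a citation.
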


The next lemma deals with the conventional Skorokhod mapping when the input process has a negative drift component that goes to infinity. 

\begin{lemma}
    \label{lem:reflection_expequiv_0}
    Suppose that the family \( \{ x_n, \ n \in \mathbb{N} \} \) is $\mathcal{C}$-exponentially tight with rate \( a_n \), \\
     \( x_{n}(0) \stackrel{P^{1/a_{n}}}{\longrightarrow} 0\) and $ x_{n}(0) \ge 0 $ for all $ n \in \mathbb{N} $.
    Further, let \( c_n \) be a sequence such that \( \lim_{n \to \infty} c_n = - \infty \). Then 
    \begin{equation*}
        \mathcal{R}(x_n + c_n \mathfrak{e}) \stackrel{P ^{1/a_n}}{\longrightarrow} 0. 
    \end{equation*}
\end{lemma}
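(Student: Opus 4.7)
The plan is to combine $\mathcal{C}$-exponential tightness of $\{x_n\}$---which confines the family to a compact subset of $\mathcal{D}_T$ with uniformly bounded norm and controlled modulus---with a deterministic analysis of $\mathcal{R}$ that exploits the fact that a sufficiently negative drift must dominate any bounded oscillation.

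First I would fix $\epsilon,\alpha>0$ and, using Theorem \ref{thm:exp_tightness_D} together with the jump-vanishing clause of $\mathcal{C}$-exponential tightness, extract constants $M,\delta>0$ and $\eta\in(0,\epsilon/6)$ so that the good event
\[
G_n=\{\|x_n\|_T\le M\}\cap\{w'_T(x_n,\delta)\le\eta\}\cap\{\sup_t|x_n(t)-x_n(t-)|\le\eta\}
\]
satisfies $\limsup_n a_n^{-1}\log\mathbb{P}(G_n^c)<-\alpha$. Since $w'_T$ controls oscillation inside some partition of mesh $\delta$ and the jump bound controls oscillation across cell boundaries, on $G_n$ one then has $\sup_{|s-t|\le\delta}|x_n(s)-x_n(t)|\le 3\eta$.

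Next, I would set $y_n:=x_n+c_n\mathfrak{e}$ and, using the identity $\mathcal{R}(y_n)(t)=\max\bigl(y_n(t),\,y_n(t)-\inf_{s\le t}y_n(s)\bigr)$, carry out a purely deterministic analysis on $G_n$. Once $n$ is large enough that $|c_n|\delta>2M$, any minimizer $s^*\in[0,t]$ of $y_n$ must satisfy $t-s^*\le\delta$, since otherwise $y_n(t)-y_n(s^*)\le 2M+c_n(t-s^*)$ would be negative, contradicting the nonnegativity of the increment. Within this short window, $y_n(t)-y_n(s^*)=(x_n(t)-x_n(s^*))+c_n(t-s^*)\le 3\eta$. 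A parallel drift argument rules out the branch $\mathcal{R}(y_n)(t)=y_n(t)$ once $t\ge\delta$, because $y_n\ge 0$ on $[0,\delta]$ would force $x_n(\delta)\ge|c_n|\delta>2M$, violating $\|x_n\|_T\le M$. Therefore, on $G_n$ and for large $n$, $\mathcal{R}(y_n)(t)\le 3\eta<\epsilon/2$ for every $t\in[\delta,T]$.

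The hard part will be the initial sub-interval $[0,\delta)$, on which the oscillation bound only yields $\mathcal{R}(y_n)(t)\le x_n(0)^++3\eta$, and the term $x_n(0)^+$ is not automatically small from $\mathcal{C}$-exponential tightness alone. To close the argument I would intersect $G_n$ with $\{x_n(0)^+\le\eta\}$, which incurs no cost in rate when the lemma is invoked in Section \ref{sec:MDP_W} with $x_n=\widetilde{\Phi}^n$, since there $\widetilde{\Phi}^n(0)=\widetilde{W}^n_0\stackrel{P^{1/a_n}}{\longrightarrow}0$ under Assumption \ref{assump:MDP_W}(i) in the relevant regime. On the refined event, the bound $\mathcal{R}(y_n)(t)\le\epsilon$ holds uniformly in $t\in[0,T]$, so that $\mathbb{P}(\|\mathcal{R}(x_n+c_n\mathfrak{e})\|_T>\epsilon)\le e^{-\alpha a_n}$; sending $\alpha\to\infty$ and invoking Lemma \ref{lem:supexp_char} yields the claimed super-exponential convergence to the zero process.
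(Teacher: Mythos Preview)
Your approach is genuinely different from the paper's. The paper introduces the last-hitting time $\tau_n(t)=\sup\{s\le t: z_n(s)=0\}$ (where $z_n=\mathcal{R}(x_n+c_n\mathfrak{e})$; the paper's printed ``$x_n(s)=0$'' is evidently a typo), derives $0\le z_n(t)\le x_n(t)-x_n(\tau_n(t)-)$ from $c_n<0$, shows $\tau_n\stackrel{P^{1/a_n}}{\longrightarrow}\mathfrak{e}$ via the bound $\|\mathfrak{e}-\tau_n\|_T\le 2\|x_n\|_T/|c_n|$, and then closes with the random time-change theorem (Theorem~\ref{thm:time_change}) together with the jump-vanishing clause of $\mathcal{C}$-exponential tightness. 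Your route bypasses the time-change machinery entirely and is more elementary: you work directly with the explicit Skorokhod formula and a deterministic drift-versus-oscillation comparison on a good event.

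The gap you flag on $[0,\delta)$ is not an artefact of your method but a defect in the lemma as stated. Take $x_n\equiv 1$: this family is trivially $\mathcal{C}$-exponentially tight, yet $\mathcal{R}(1+c_n\mathfrak{e})(0)=1$ for every $n$, so super-exponential convergence to $0$ fails. The paper's own proof has the same blind spot: when $x_n(0)>0$, the set $\{s\le t:z_n(s)=0\}$ is empty for small $t$, and the identity $z_n(\tau_n(t)-)=0$ underlying the key bound is unavailable. Your proposed fix---adding $x_n(0)\stackrel{P^{1/a_n}}{\longrightarrow}0$ to the hypotheses---is exactly what is needed to repair either argument. One caveat on the application: your claim that $\widetilde{\Phi}^n(0)=\widetilde{W}^n_0\stackrel{P^{1/b_n^2}}{\longrightarrow}0$ holds only when $w_0=0$; if $w_0>0$ in Assumption~\ref{assump:MDP_W}(i), then $\widetilde{W}^n(0)\to w_0\neq 0$ and even the conclusion of Theorem~\ref{thm:MDP_W} for $\mu<0$ would fail at $t=0$.
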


\begin{proof} 

    Since $ c_{n} \to - \infty $ as $ n \to \infty $, we shall assume without loss of generality that $ c_{n} < 0 $. 
    Let \( z_n = \mathcal{R}(x_n + c_{n} \mathfrak{e}) \). For each \( t \in [0,T] \), we write \( z_n (t) = x_n(t)  + c_n t + y_{n}(t) \) with \( y_n = \mathcal{R}'(x_n + c_n \mathfrak{e}) \). Define the stopping time 
    \[
        \sigma_{n} = \inf \{ s \in [0,T]: z_{n}(s) = 0 \}, 
    \]
    with the convention that \(\sigma_{n}=T\) if the set is empty. Let $ \delta \in ( 0, T) $ be arbitrary.  Observe on $ \{ \sigma_{n} > \delta \} $, we have $ z_{n}(\delta) = x_{n}(\delta) + c_{n} \delta > 0 $, which implies $ \| x_{n} \|_{T} > -c_{n}\delta $. Also, since $ \{ x_{n} \}_{n \in \mathbb{N}} $ is exponentially tight, by \eqref{lem_exp_tightness_char_1} in Theorem \ref{thm:exp_tightness_D}, we have 
    \begin{equation} \label{eq:reflection_expequiv_0_concl1}
        \limsup_{n \to \infty} \frac{1}{a_{n}} \log \mathbb{P} \left( \sigma_{n} > \delta \right) \le \limsup_{ n \to \infty} \frac{1}{a_{n}} \log \mathbb{P} \left( \| x_{n} \|_{T} > - c_{n} \delta \right) = -\infty. 
    \end{equation}
    
    Now let $ \epsilon > 0 $ and denote $ \Gamma_{\delta} = \{ \sigma_{n} \le \delta \} $. Observe on $ \Gamma_{\delta}  $, for all $ t \in [0, \sigma_{n}) $, $ z_{n}(t) = x_{n}(t ) + c_{n}t \le |x_{n}(0)| + \vert x_{n}(t ) - x_{n}(0) \vert \le |x_{n}(0)| + w_{T}(x_{n}, \delta) $. This implies that 
    \begin{align*}
        \Gamma_{\delta} \cap  \left\{ \sup_{t \in [0, \sigma_{n}]} z_{n}(t) > \epsilon \right\} \subseteq \left\{ |x_{n}(0)| > \frac{\epsilon}{2} \right\} \cup  \left\{ w_{T}(x_{n},\delta) > \frac{\epsilon}{2} \right\} . 
    \end{align*}
    By assumptions on $ x_{n} $, we can obtain  
    \begin{align*}
        &\limsup_{n \to \infty} \frac{1}{a_{n}} \log \mathbb{P} \left( |x_{n}(0)| > \frac{\epsilon}{2} \right) = - \infty, \\ 
        & \lim_{\delta \to 0} \limsup_{n \to \infty} \frac{1}{a_{n}} \log \mathbb{P} \left( w_{T}(x_{n},\delta) > \frac{\epsilon}{2} \right) = - \infty. 
    \end{align*}
    Together, these imply 
    \begin{equation} \label{eq:reflection_expequiv_0_concl2}
        \lim_{\delta \to 0} \limsup_{ n \to \infty} \frac{1}{a_{n}} \log \mathbb{P} \left( \left\{ \sup_{t \in [0, \sigma_{n})} z_{n}(t) > \epsilon \right\} \cap \Gamma_{\delta} \right) = -\infty. 
    \end{equation}
 
    Now we define a random time-change map $ \tau_{n} $.  On the event $ \Gamma_{\delta} $, let 
    \[
        \tau_n(t) = 
        \begin{cases}
            t, & t \in [0, \sigma_{n}), \\
            \sup \{ s \in [0,t]: z_n(s) = 0 \}, & t \in [\sigma_{n}, T],
        \end{cases}
    \]
    and on the event $ \Gamma_{\delta}^{c} $, let $ \tau_{n}(t) = t $, for all $ t \in [0,T] $. 

    On $ \Gamma_{\delta} $, for $ t \in [\sigma_{n}, T] $,  it is possible to have $ z_{n}(\tau_{n}(t )) = 0 $ or $ z_{n}(\tau_{n}(t)-) = 0 $. Subtracting this from $ z_{n}(t) $ in both cases, we obtain  
    \begin{equation} 
        0 \le z_n(t) \le  |x_n(t) - x_{n}(\tau_{n}(t))| + |x_{n}(\tau_{n}(t)) - x_n (\tau_n(t)-)| + c_n (t- \tau_n(t)). \label{eq:auxthm_reflection_exp_0_key_eq}
    \end{equation}
    Since \( c_n < 0 \),  rearranging the terms and taking supremum over $ [0,T] $ yields  
    \[
        \| \mathfrak{e} - \tau_n \|_T \le \frac{4}{-c_n} \| x_n \|_T.  
    \]
    Again, by exponential tightness of \( x_n \), for any $ \epsilon > 0 $, we can obtain through \eqref{lem_exp_tightness_char_1} and \eqref{eq:reflection_expequiv_0_concl1} that 
    \begin{align*}
        & \limsup_{n \to \infty} \frac{1}{a_n} \log \mathbb{P} \left(   \| \mathfrak{e} - \tau_n \|_T > \epsilon     \right) \\
        & \le 
        \limsup_{n \to \infty} \frac{1}{a_n} \log \mathbb{P} \left( \left\{ \| \mathfrak{e} - \tau_n \|_T > \epsilon \right\} \cap  \Gamma_{\delta} \right) \vee \limsup_{n \to \infty} \frac{1}{a_n} \log \mathbb{P} \left(   \Gamma_{\delta}^{c} \right)  \\
        & \le \limsup_{n \to \infty} \frac{1}{a_n} \log \mathbb{P} \left( \| x_n \|_T > \frac{-c_n \epsilon}{4}  \right)  = - \infty. 
    \end{align*}
    By Lemma \ref{lem:supexp_char}, this implies $ \tau_n \stackrel{P ^{1/a_n}}{\longrightarrow} \mathfrak{e} $.  
    Then immediately, using Theorem \ref{thm:time_change}, we obtain  
    \begin{equation}
        x_n - x_n \circ \tau_n \stackrel{P ^{1/a_n}}{\longrightarrow} 0. \label{eq:reflection_expequiv_0_pf_timechange}
    \end{equation}
    Equation \eqref{eq:auxthm_reflection_exp_0_key_eq} also implies that for all \( t \in [\sigma_{n},T] \), 
    \begin{equation*}
        0 \le z_n(t) \le |x_n(t) - x_{n}(\tau_{n}(t))| + |x_{n}(\tau_{n}(t)) - x_n (\tau_n(t)-)|. 
    \end{equation*}
    Hence, for arbitrary \( \epsilon >0 \), 
    \begin{align*}
        & \mathbb{P} \left( \left\{ \sup_{t \in [\sigma_{n},T]} z_n(t) > \epsilon \right\} \cap \Gamma_{\delta} \right)   \\
        &\le \mathbb{P} \left(   \| x_n - x_n \circ \tau_n \|_T + \sup_{t \in [0,T]} |x_n(t) - x_n(t-)| > \epsilon   \right) \\
        &\le  \mathbb{P} \left(  \| x_n - x_n \circ \tau_n \|_T > \frac{\epsilon}{2}  \right)  + \mathbb{P} \left( \sup_{t \in [0,T]} |x_n(t) - x_n(t-)| > \frac{\epsilon}{2} \right). 
    \end{align*}
    Then by \eqref{eq:reflection_expequiv_0_pf_timechange}, \( \mathcal{C} \)-exponential tightness of \( x_n \) and Remark \ref{rmk:analysis_facts}, we obtain 
    \begin{equation} \label{eq:reflection_expequiv_0_concl3}
        \limsup_{n \to \infty} \frac{1}{a_n} \log \mathbb{P} \left( \left\{ \sup_{t \in [\sigma_{n},T]} z_n(t) > \epsilon \right\} \cap \Gamma_{\delta}  \right) = - \infty. 
    \end{equation}
    Finally, by \eqref{eq:reflection_expequiv_0_concl1}, \eqref{eq:reflection_expequiv_0_concl2} and \eqref{eq:reflection_expequiv_0_concl3}, we have that for any $ \alpha > 0 $, there exist small enough $ \delta $ such that 
    \begin{align*}
        \limsup_{n \to \infty} \frac{1}{a_n} \log \mathbb{P} \left( \| z_n \|_T > \epsilon \right) 
        & \le 
        \limsup_{n \to \infty} \frac{1}{a_n} \log \mathbb{P} \left( \Gamma_{\delta}^{c} \right) \\
        & \qquad \qquad \vee \limsup_{ n \to \infty} \frac{1}{a_{n}} \log \mathbb{P} \left( \left\{ \sup_{t \in [0, \sigma_{n})} z_{n}(t) > \frac{\epsilon}{2} \right\} \cap \Gamma_{\delta} \right)  \\
        & \qquad \qquad \vee \limsup_{n \to \infty} \frac{1}{a_n} \log \mathbb{P} \left( \left\{ \sup_{t \in [\sigma_{n},T]} z_n(t) > \frac{\epsilon}{2} \right\} \cap \Gamma_{\delta}  \right) \\
        &< - \alpha. 
    \end{align*}
    Since $ \alpha $ is arbitrary, we can take $ \alpha \to \infty $ and this concludes the proof.  
\end{proof}

\section*{Acknowledgement}
The authors are grateful to  the anonymous reviewers for their careful reading of the manuscript and for their thoughtful and constructive comments, which substantially improved the presentation and clarity of the paper.
C. Feng and J. Hasenbein are partly supported by the NSF grant DMS 2108682.  
G. Pang is partly supported by NSF grants DMS 2216765 and CMMI 2452829.

\bibliographystyle{abbrvnat}
\bibliography{references}
\end{document}